\documentclass[a4paper,11pt]{article}

\usepackage{MyStyle}

\usepackage{hyperref}

\usepackage{planarforest}
\newcommand{\forestA}{
\tikz[planar forest ] {

\draw (-1.0,1.0) edge[-] (0.0,1.0);
\draw (0.0,1.0) edge[-] (1.5,1.0);
\draw (-1.0,1.0) arc (90:270:0.5);
\draw (1.5,1.0) arc (90:-90:0.5);
\draw (-1.0,0.0) edge[-] (1.5,0.0);
\node [draw=none] at (0.0, 0.0) { } 
child {node [b] at (-1.0, 1.0) {  } edge from parent[draw=none] 
}
child {node [b] at (0.0, 1.0) {  } edge from parent[draw=none] 
child {node [b] at (0.0, 1.0) {  }  
}
}
child {node [b] at (1.5, 1.0) {  } edge from parent[draw=none] 
child {node [b] at (-0.5, 1.0) {  }  
}
child {node [b] at (0.5, 1.0) {  }  
}
}
;
}}

\newcommand{\forestB}{
\tikz[planar forest ] {

\draw (-1.5,1.0) edge[-] (0.0,1.0);
\draw (0.0,1.0) edge[-] (1.0,1.0);
\draw (-1.5,1.0) arc (90:270:0.5);
\draw (1.0,1.0) arc (90:-90:0.5);
\draw (-1.5,0.0) edge[-] (1.0,0.0);
\node [draw=none] at (0.0, 0.0) { } 
child {node [b] at (-1.5, 1.0) {  } edge from parent[draw=none] 
child {node [b] at (0.0, 1.0) {  }  
}
}
child {node [b] at (0.0, 1.0) {  } edge from parent[draw=none] 
child {node [b] at (-0.5, 1.0) {  }  
}
child {node [b] at (0.5, 1.0) {  }  
}
}
child {node [b] at (1.0, 1.0) {  } edge from parent[draw=none] 
}
;
}}

\newcommand{\forestC}{
\tikz[planar forest ] {

\draw (-1.0,1.0) edge[-] (0.0,1.0);
\draw (0.0,1.0) edge[-] (1.0,1.0);
\draw (-1.0,1.0) arc (90:270:0.5);
\draw (1.0,1.0) arc (90:-90:0.5);
\draw (-1.0,0.0) edge[-] (1.0,0.0);
\node [draw=none] at (0.0, 0.0) { } 
child {node [b] at (-1.0, 1.0) {  } edge from parent[draw=none] 
child {node [b] at (-0.5, 1.0) {  }  
}
child {node [b] at (0.5, 1.0) {  }  
}
}
child {node [b] at (0.0, 1.0) {  } edge from parent[draw=none] 
}
child {node [b] at (1.0, 1.0) {  } edge from parent[draw=none] 
child {node [b] at (0.0, 1.0) {  }  
}
}
;
}}

\newcommand{\forestD}{
\tikz[planar forest ] {

\node [b] at (0.0, 0.0) {  } 
;
}}

\newcommand{\forestE}{
\tikz[planar forest ] {

\node [b] at (0.0, 0.0) {  } 
child {node [b] at (0.0, 1.0) {  }  
}
;
}}

\newcommand{\forestF}{
\tikz[planar forest ] {

\node [b] at (0.0, 0.0) {  } 
child {node [b] at (-0.5, 1.0) {  }  
}
child {node [b] at (0.5, 1.0) {  }  
}
;
}}

\newcommand{\forestG}{
\tikz[planar forest ] {

\node [b] at (0.0, 0.0) {  } 
child {node [b] at (0.0, 1.0) {  }  
}
;
}}

\newcommand{\forestH}{
\tikz[planar forest ] {

\node [b] at (0.0, 0.0) {  } 
child {node [b] at (-0.5, 1.0) {  }  
}
child {node [b] at (0.5, 1.0) {  }  
}
;
}}

\newcommand{\forestI}{
\tikz[planar forest ] {

\node [b] at (0.0, 0.0) {  } 
;
}}

\newcommand{\forestJ}{
\tikz[planar forest ] {

\node [b] at (0.0, 0.0) {  } 
child {node [b] at (-0.5, 1.0) {  }  
}
child {node [b] at (0.5, 1.0) {  }  
}
;
}}

\newcommand{\forestK}{
\tikz[planar forest ] {

\node [b] at (0.0, 0.0) {  } 
;
}}

\newcommand{\forestL}{
\tikz[planar forest ] {

\node [b] at (0.0, 0.0) {  } 
child {node [b] at (0.0, 1.0) {  }  
}
;
}}

\newcommand{\forestM}{
\tikz[planar forest ] {

\node [b] at (0.0, 0.0) {  } 
;
}}

\newcommand{\forestN}{
\tikz[planar forest ] {

\node [b] at (0.0, 0.0) {  } 
child {node [b] at (0.0, 1.0) {  }  
}
;
}}

\newcommand{\forestO}{
\tikz[planar forest ] {

\draw (0.0,1.0) arc (90:270:0.5);
\draw (0.0,1.0) arc (90:-90:0.5);
\node [draw=none] at (0.0, 0.0) { } 
child {node [b] at (0.0, 1.0) {  } edge from parent[draw=none] 
}
;
\node [b] at (1.1, 0.0) {  } 
;
}}

\newcommand{\forestP}{
\tikz[planar forest ] {

\node [b] at (0.0, 0.0) {  } 
child {node [b] at (0.0, 1.0) {  }  
child {node [b] at (0.0, 1.0) {  }  
}
}
;
}}

\newcommand{\forestQ}{
\tikz[planar forest ] {

\node [b] at (0.0, 0.0) {  } 
child {node [b] at (-0.5, 1.0) {  }  
}
child {node [b] at (0.5, 1.0) {  }  
}
;
}}

\newcommand{\forestR}{
\tikz[planar forest ] {

\draw (0.0,1.0) arc (90:270:0.5);
\draw (0.0,1.0) arc (90:-90:0.5);
\node [draw=none] at (0.0, 0.0) { } 
child {node [b] at (0.0, 1.0) {  } edge from parent[draw=none] 
}
;
\node [b] at (1.1, 0.0) {  } 
child {node [b] at (0.0, 1.0) {  }  
}
;
}}

\newcommand{\forestS}{
\tikz[planar forest ] {

\draw (0.0,1.0) arc (90:270:0.5);
\draw (0.0,1.0) arc (90:-90:0.5);
\node [draw=none] at (0.0, 0.0) { } 
child {node [b] at (0.0, 1.0) {  } edge from parent[draw=none] 
child {node [b] at (0.0, 1.0) {  }  
}
}
;
\node [b] at (1.1, 0.0) {  } 
;
}}

\newcommand{\forestT}{
\tikz[planar forest ] {

\draw (-0.5,1.0) edge[-] (0.5,1.0);
\draw (-0.5,1.0) arc (90:270:0.5);
\draw (0.5,1.0) arc (90:-90:0.5);
\draw (-0.5,0.0) edge[-] (0.5,0.0);
\node [draw=none] at (0.0, 0.0) { } 
child {node [b] at (-0.5, 1.0) {  } edge from parent[draw=none] 
}
child {node [b] at (0.5, 1.0) {  } edge from parent[draw=none] 
}
;
\node [b] at (1.6, 0.0) {  } 
;
}}

\newcommand{\forestU}{
\tikz[planar forest ] {

\draw (-4.440892098500626e-16,1.0) arc (90:270:0.5);
\draw (-4.440892098500626e-16,1.0) arc (90:-90:0.5);
\node [draw=none] at (-4.440892098500626e-16, 0.0) { } 
child {node [b] at (0.0, 1.0) {  } edge from parent[draw=none] 
}
;
\draw (1.2,1.0) arc (90:270:0.5);
\draw (1.2,1.0) arc (90:-90:0.5);
\node [draw=none] at (1.2, 0.0) { } 
child {node [b] at (0.0, 1.0) {  } edge from parent[draw=none] 
}
;
\node [b] at (2.3, 0.0) {  } 
;
}}

\newcommand{\forestV}{
\tikz[planar forest ] {

\node [b] at (0.0, 0.0) {  } 
child {node [b] at (0.0, 1.0) {  }  
child {node [b] at (0.0, 1.0) {  }  
child {node [b] at (0.0, 1.0) {  }  
}
}
}
;
}}

\newcommand{\forestW}{
\tikz[planar forest ] {

\node [b] at (0.0, 0.0) {  } 
child {node [b] at (0.0, 1.0) {  }  
child {node [b] at (-0.5, 1.0) {  }  
}
child {node [b] at (0.5, 1.0) {  }  
}
}
;
}}

\newcommand{\forestX}{
\tikz[planar forest ] {

\node [b] at (0.0, 0.0) {  } 
child {node [b] at (-0.5, 1.0) {  }  
}
child {node [b] at (0.5, 1.0) {  }  
child {node [b] at (0.0, 1.0) {  }  
}
}
;
}}

\newcommand{\forestY}{
\tikz[planar forest ] {

\node [b] at (0.0, 0.0) {  } 
child {node [b] at (-1.0, 1.0) {  }  
}
child {node [b] at (0.0, 1.0) {  }  
}
child {node [b] at (1.0, 1.0) {  }  
}
;
}}

\newcommand{\forestAB}{
\tikz[planar forest ] {

\draw (0.0,1.0) arc (90:270:0.5);
\draw (0.0,1.0) arc (90:-90:0.5);
\node [draw=none] at (0.0, 0.0) { } 
child {node [b] at (0.0, 1.0) {  } edge from parent[draw=none] 
}
;
\node [b] at (1.1, 0.0) {  } 
child {node [b] at (0.0, 1.0) {  }  
child {node [b] at (0.0, 1.0) {  }  
}
}
;
}}

\newcommand{\forestBB}{
\tikz[planar forest ] {

\draw (0.0,1.0) arc (90:270:0.5);
\draw (0.0,1.0) arc (90:-90:0.5);
\node [draw=none] at (0.0, 0.0) { } 
child {node [b] at (0.0, 1.0) {  } edge from parent[draw=none] 
}
;
\node [b] at (1.5, 0.0) {  } 
child {node [b] at (-0.5, 1.0) {  }  
}
child {node [b] at (0.5, 1.0) {  }  
}
;
}}

\newcommand{\forestCB}{
\tikz[planar forest ] {

\draw (0.0,1.0) arc (90:270:0.5);
\draw (0.0,1.0) arc (90:-90:0.5);
\node [draw=none] at (0.0, 0.0) { } 
child {node [b] at (0.0, 1.0) {  } edge from parent[draw=none] 
child {node [b] at (0.0, 1.0) {  }  
}
}
;
\node [b] at (1.1, 0.0) {  } 
child {node [b] at (0.0, 1.0) {  }  
}
;
}}

\newcommand{\forestDB}{
\tikz[planar forest ] {

\draw (-0.5,1.0) edge[-] (0.5,1.0);
\draw (-0.5,1.0) arc (90:270:0.5);
\draw (0.5,1.0) arc (90:-90:0.5);
\draw (-0.5,0.0) edge[-] (0.5,0.0);
\node [draw=none] at (0.0, 0.0) { } 
child {node [b] at (-0.5, 1.0) {  } edge from parent[draw=none] 
}
child {node [b] at (0.5, 1.0) {  } edge from parent[draw=none] 
}
;
\node [b] at (1.6, 0.0) {  } 
child {node [b] at (0.0, 1.0) {  }  
}
;
}}

\newcommand{\forestEB}{
\tikz[planar forest ] {

\draw (-4.440892098500626e-16,1.0) arc (90:270:0.5);
\draw (-4.440892098500626e-16,1.0) arc (90:-90:0.5);
\node [draw=none] at (-4.440892098500626e-16, 0.0) { } 
child {node [b] at (0.0, 1.0) {  } edge from parent[draw=none] 
}
;
\draw (1.2,1.0) arc (90:270:0.5);
\draw (1.2,1.0) arc (90:-90:0.5);
\node [draw=none] at (1.2, 0.0) { } 
child {node [b] at (0.0, 1.0) {  } edge from parent[draw=none] 
}
;
\node [b] at (2.3, 0.0) {  } 
child {node [b] at (0.0, 1.0) {  }  
}
;
}}

\newcommand{\forestFB}{
\tikz[planar forest ] {

\draw (0.0,1.0) arc (90:270:0.5);
\draw (0.0,1.0) arc (90:-90:0.5);
\node [draw=none] at (0.0, 0.0) { } 
child {node [b] at (0.0, 1.0) {  } edge from parent[draw=none] 
child {node [b] at (0.0, 1.0) {  }  
child {node [b] at (0.0, 1.0) {  }  
}
}
}
;
\node [b] at (1.1, 0.0) {  } 
;
}}

\newcommand{\forestGB}{
\tikz[planar forest ] {

\draw (0.0,1.0) arc (90:270:0.5);
\draw (0.0,1.0) arc (90:-90:0.5);
\node [draw=none] at (0.0, 0.0) { } 
child {node [b] at (0.0, 1.0) {  } edge from parent[draw=none] 
child {node [b] at (-0.5, 1.0) {  }  
}
child {node [b] at (0.5, 1.0) {  }  
}
}
;
\node [b] at (1.1, 0.0) {  } 
;
}}

\newcommand{\forestHB}{
\tikz[planar forest ] {

\draw (-0.5,1.0) edge[-] (0.5,1.0);
\draw (-0.5,1.0) arc (90:270:0.5);
\draw (0.5,1.0) arc (90:-90:0.5);
\draw (-0.5,0.0) edge[-] (0.5,0.0);
\node [draw=none] at (0.0, 0.0) { } 
child {node [b] at (-0.5, 1.0) {  } edge from parent[draw=none] 
}
child {node [b] at (0.5, 1.0) {  } edge from parent[draw=none] 
child {node [b] at (0.0, 1.0) {  }  
}
}
;
\node [b] at (1.6, 0.0) {  } 
;
}}

\newcommand{\forestIB}{
\tikz[planar forest ] {

\draw (-1.0,1.0) edge[-] (0.0,1.0);
\draw (0.0,1.0) edge[-] (1.0,1.0);
\draw (-1.0,1.0) arc (90:270:0.5);
\draw (1.0,1.0) arc (90:-90:0.5);
\draw (-1.0,0.0) edge[-] (1.0,0.0);
\node [draw=none] at (0.0, 0.0) { } 
child {node [b] at (-1.0, 1.0) {  } edge from parent[draw=none] 
}
child {node [b] at (0.0, 1.0) {  } edge from parent[draw=none] 
}
child {node [b] at (1.0, 1.0) {  } edge from parent[draw=none] 
}
;
\node [b] at (2.1, 0.0) {  } 
;
}}

\newcommand{\forestJB}{
\tikz[planar forest ] {

\draw (-4.440892098500626e-16,1.0) arc (90:270:0.5);
\draw (-4.440892098500626e-16,1.0) arc (90:-90:0.5);
\node [draw=none] at (-4.440892098500626e-16, 0.0) { } 
child {node [b] at (0.0, 1.0) {  } edge from parent[draw=none] 
}
;
\draw (1.2,1.0) arc (90:270:0.5);
\draw (1.2,1.0) arc (90:-90:0.5);
\node [draw=none] at (1.2, 0.0) { } 
child {node [b] at (0.0, 1.0) {  } edge from parent[draw=none] 
child {node [b] at (0.0, 1.0) {  }  
}
}
;
\node [b] at (2.3, 0.0) {  } 
;
}}

\newcommand{\forestKB}{
\tikz[planar forest ] {

\draw (4.440892098500626e-16,1.0) arc (90:270:0.5);
\draw (4.440892098500626e-16,1.0) arc (90:-90:0.5);
\node [draw=none] at (4.440892098500626e-16, 0.0) { } 
child {node [b] at (0.0, 1.0) {  } edge from parent[draw=none] 
}
;
\draw (1.2,1.0) edge[-] (2.2,1.0);
\draw (1.2,1.0) arc (90:270:0.5);
\draw (2.2,1.0) arc (90:-90:0.5);
\draw (1.2,0.0) edge[-] (2.2,0.0);
\node [draw=none] at (1.7, 0.0) { } 
child {node [b] at (-0.5, 1.0) {  } edge from parent[draw=none] 
}
child {node [b] at (0.5000000000000002, 1.0) {  } edge from parent[draw=none] 
}
;
\node [b] at (3.3, 0.0) {  } 
;
}}

\newcommand{\forestLB}{
\tikz[planar forest ] {

\draw (0.0,1.0) arc (90:270:0.5);
\draw (0.0,1.0) arc (90:-90:0.5);
\node [draw=none] at (0.0, 0.0) { } 
child {node [b] at (0.0, 1.0) {  } edge from parent[draw=none] 
}
;
\draw (1.2,1.0) arc (90:270:0.5);
\draw (1.2,1.0) arc (90:-90:0.5);
\node [draw=none] at (1.2, 0.0) { } 
child {node [b] at (0.0, 1.0) {  } edge from parent[draw=none] 
}
;
\draw (2.4000000000000004,1.0) arc (90:270:0.5);
\draw (2.4000000000000004,1.0) arc (90:-90:0.5);
\node [draw=none] at (2.4000000000000004, 0.0) { } 
child {node [b] at (0.0, 1.0) {  } edge from parent[draw=none] 
}
;
\node [b] at (3.5000000000000004, 0.0) {  } 
;
}}

\newcommand{\forestMB}{
\tikz[planar forest ] {

\draw (-0.5,1.0) edge[-] (0.5,1.0);
\draw (-0.5,1.0) arc (90:270:0.5);
\draw (0.5,1.0) arc (90:-90:0.5);
\draw (-0.5,0.0) edge[-] (0.5,0.0);
\node [draw=none] at (0.0, 0.0) { } 
child {node [b] at (-0.5, 1.0) {  } edge from parent[draw=none] 
}
child {node [b] at (0.5, 1.0) {  } edge from parent[draw=none] 
}
;
\node [b] at (1.6, 0.0) {  } 
child {node [b] at (0.0, 1.0) {  }  
}
;
}}

\newcommand{\forestNB}{
\tikz[planar forest ] {

\draw (0.0,1.0) arc (90:270:0.5);
\draw (0.0,1.0) arc (90:-90:0.5);
\node [draw=none] at (0.0, 0.0) { } 
child {node [b] at (0.0, 1.0) {  } edge from parent[draw=none] 
child {node [b] at (0.0, 1.0) {  }  
}
}
;
\node [b] at (1.1, 0.0) {  } 
child {node [b] at (0.0, 1.0) {  }  
}
;
}}

\newcommand{\forestOB}{
\tikz[planar forest ] {

\draw (-0.5,1.0) edge[-] (0.5,1.0);
\draw (-0.5,1.0) arc (90:270:0.5);
\draw (0.5,1.0) arc (90:-90:0.5);
\draw (-0.5,0.0) edge[-] (0.5,0.0);
\node [draw=none] at (0.0, 0.0) { } 
child {node [b] at (-0.5, 1.0) {  } edge from parent[draw=none] 
}
child {node [b] at (0.5, 1.0) {  } edge from parent[draw=none] 
}
;
\draw (1.7,1.0) arc (90:270:0.5);
\draw (1.7,1.0) arc (90:-90:0.5);
\node [draw=none] at (1.7, 0.0) { } 
child {node [b] at (0.0, 1.0) {  } edge from parent[draw=none] 
child {node [b] at (0.0, 1.0) {  }  
}
}
;
\node [b] at (3.2, 0.0) {  } 
child {node [b] at (-0.5, 1.0) {  }  
}
child {node [b] at (0.5, 1.0) {  }  
child {node [b] at (0.0, 1.0) {  }  
}
}
;
}}

\newcommand{\forestPB}{
\tikz[planar forest ] {

\draw (-0.5000000000000004,1.0) edge[-] (0.49999999999999956,1.0);
\draw (-0.5000000000000004,1.0) arc (90:270:0.5);
\draw (0.49999999999999956,1.0) arc (90:-90:0.5);
\draw (-0.5000000000000004,0.0) edge[-] (0.49999999999999956,0.0);
\node [draw=none] at (-4.440892098500626e-16, 0.0) { } 
child {node [b] at (-0.5, 1.0) {  } edge from parent[draw=none] 
}
child {node [b] at (0.5, 1.0) {  } edge from parent[draw=none] 
}
;
\draw (1.7,1.0) arc (90:270:0.5);
\draw (1.7,1.0) arc (90:-90:0.5);
\node [draw=none] at (1.7, 0.0) { } 
child {node [b] at (0.0, 1.0) {  } edge from parent[draw=none] 
child {node [b] at (0.0, 1.0) {  }  
}
}
;
\node [b] at (2.8, 0.0) {  } 
child {node [b] at (0.0, 1.0) {  }  
child {node [b] at (0.0, 1.0) {  }  
child {node [b] at (0.0, 1.0) {  }  
}
}
}
;
}}

\newcommand{\forestQB}{
\tikz[planar forest ] {

\draw (-0.5000000000000004,1.0) edge[-] (0.49999999999999956,1.0);
\draw (-0.5000000000000004,1.0) arc (90:270:0.5);
\draw (0.49999999999999956,1.0) arc (90:-90:0.5);
\draw (-0.5000000000000004,0.0) edge[-] (0.49999999999999956,0.0);
\node [draw=none] at (-4.440892098500626e-16, 0.0) { } 
child {node [b] at (-0.5, 1.0) {  } edge from parent[draw=none] 
}
child {node [b] at (0.5, 1.0) {  } edge from parent[draw=none] 
}
;
\draw (1.7,1.0) arc (90:270:0.5);
\draw (1.7,1.0) arc (90:-90:0.5);
\node [draw=none] at (1.7, 0.0) { } 
child {node [b] at (0.0, 1.0) {  } edge from parent[draw=none] 
child {node [b] at (-0.5, 1.0) {  }  
}
child {node [b] at (0.5000000000000002, 1.0) {  }  
child {node [b] at (0.0, 1.0) {  }  
}
}
}
;
\node [b] at (2.8, 0.0) {  } 
child {node [b] at (0.0, 1.0) {  }  
}
;
}}

\newcommand{\forestRB}{
\tikz[planar forest ] {

\draw (-0.5000000000000004,1.0) edge[-] (0.49999999999999956,1.0);
\draw (-0.5000000000000004,1.0) arc (90:270:0.5);
\draw (0.49999999999999956,1.0) arc (90:-90:0.5);
\draw (-0.5000000000000004,0.0) edge[-] (0.49999999999999956,0.0);
\node [draw=none] at (-4.440892098500626e-16, 0.0) { } 
child {node [b] at (-0.5, 1.0) {  } edge from parent[draw=none] 
}
child {node [b] at (0.5, 1.0) {  } edge from parent[draw=none] 
}
;
\draw (1.7,1.0) arc (90:270:0.5);
\draw (1.7,1.0) arc (90:-90:0.5);
\node [draw=none] at (1.7, 0.0) { } 
child {node [b] at (0.0, 1.0) {  } edge from parent[draw=none] 
child {node [b] at (0.0, 1.0) {  }  
child {node [b] at (0.0, 1.0) {  }  
child {node [b] at (0.0, 1.0) {  }  
}
}
}
}
;
\node [b] at (2.8, 0.0) {  } 
child {node [b] at (0.0, 1.0) {  }  
}
;
}}

\newcommand{\forestSB}{
\tikz[planar forest ] {

\draw (-0.5,1.0) edge[-] (0.5,1.0);
\draw (-0.5,1.0) arc (90:270:0.5);
\draw (0.5,1.0) arc (90:-90:0.5);
\draw (-0.5,0.0) edge[-] (0.5,0.0);
\node [draw=none] at (0.0, 0.0) { } 
child {node [b] at (-0.5, 1.0) {  } edge from parent[draw=none] 
}
child {node [b] at (0.5, 1.0) {  } edge from parent[draw=none] 
}
;
\node [b] at (1.6, 0.0) {  } 
child {node [b] at (0.0, 1.0) {  }  
}
;
}}

\newcommand{\forestTB}{
\tikz[planar forest ] {

\draw (-0.5,1.0) edge[-] (0.5000000000000002,1.0);
\draw (-0.5,1.0) arc (90:270:0.5);
\draw (0.5000000000000002,1.0) arc (90:-90:0.5);
\draw (-0.5,0.0) edge[-] (0.5000000000000002,0.0);
\node [draw=none] at (0.0, 0.0) { } 
child {node [b] at (-0.5, 1.0) {  } edge from parent[draw=none] 
}
child {node [b] at (0.5000000000000002, 1.0) {  } edge from parent[draw=none] 
}
;
\draw (1.7,1.0) arc (90:270:0.5);
\draw (1.7,1.0) arc (90:-90:0.5);
\node [draw=none] at (1.7, 0.0) { } 
child {node [b] at (0.0, 1.0) {  } edge from parent[draw=none] 
child {node [b] at (0.0, 1.0) {  }  
}
}
;
}}

\newcommand{\forestUB}{
\tikz[planar forest ] {

\draw (-0.5,1.0) edge[-] (0.5,1.0);
\draw (-0.5,1.0) arc (90:270:0.5);
\draw (0.5,1.0) arc (90:-90:0.5);
\draw (-0.5,0.0) edge[-] (0.5,0.0);
\node [draw=none] at (0.0, 0.0) { } 
child {node [b] at (-0.5, 1.0) {  } edge from parent[draw=none] 
}
child {node [b] at (0.5, 1.0) {  } edge from parent[draw=none] 
}
;
\draw (1.7000000000000002,1.0) edge[-] (2.7,1.0);
\draw (1.7000000000000002,1.0) arc (90:270:0.5);
\draw (2.7,1.0) arc (90:-90:0.5);
\draw (1.7000000000000002,0.0) edge[-] (2.7,0.0);
\node [draw=none] at (2.2, 0.0) { } 
child {node [b] at (-0.5, 1.0) {  } edge from parent[draw=none] 
}
child {node [b] at (0.5, 1.0) {  } edge from parent[draw=none] 
}
;
}}

\newcommand{\forestVB}{
\tikz[planar forest ] {

\draw (-0.5,1.0) edge[-] (0.5,1.0);
\draw (-0.5,1.0) arc (90:270:0.5);
\draw (0.5,1.0) arc (90:-90:0.5);
\draw (-0.5,0.0) edge[-] (0.5,0.0);
\node [draw=none] at (0.0, 0.0) { } 
child {node [b] at (-0.5, 1.0) {  } edge from parent[draw=none] 
}
child {node [b] at (0.5, 1.0) {  } edge from parent[draw=none] 
child {node [b] at (0.0, 1.0) {  }  
child {node [b] at (0.0, 1.0) {  }  
}
}
}
;
}}

\newcommand{\forestWB}{
\tikz[planar forest ] {

\node [l] at (0.0, 0.0) { $\times$ } 
;
}}

\newcommand{\forestXB}{
\tikz[planar forest ] {

\draw (0.0,1.0) arc (90:270:0.5);
\draw (0.0,1.0) arc (90:-90:0.5);
\node [draw=none] at (0.0, 0.0) { } 
child {node [b] at (0.0, 1.0) {  } edge from parent[draw=none] 
}
;
\node [b] at (1.5, 0.0) {  } 
child {node [b] at (-0.5, 1.0) {  }  
child {node [l] at (-0.5, 1.0) { $\times$ }  
}
child {node [b] at (0.5, 1.0) {  }  
}
}
child {node [b] at (0.5, 1.0) {  }  
}
;
}}

\newcommand{\forestYB}{
\tikz[planar forest ] {

\draw (0.0,1.0) arc (90:270:0.5);
\draw (0.0,1.0) arc (90:-90:0.5);
\node [draw=none] at (0.0, 0.0) { } 
child {node [b] at (0.0, 1.0) {  } edge from parent[draw=none] 
}
;
\draw (1.2,1.0) edge[-] (2.2,1.0);
\draw (1.2,1.0) arc (90:270:0.5);
\draw (2.2,1.0) arc (90:-90:0.5);
\draw (1.2,0.0) edge[-] (2.2,0.0);
\node [draw=none] at (1.7, 0.0) { } 
child {node [b] at (-0.5, 1.0) {  } edge from parent[draw=none] 
child {node [b] at (0.0, 1.0) {  }  
}
}
child {node [b] at (0.5000000000000002, 1.0) {  } edge from parent[draw=none] 
child {node [b] at (0.0, 1.0) {  }  
}
}
;
}}

\newcommand{\forestAC}{
\tikz[planar forest ] {

\draw (0.0,1.0) arc (90:270:0.5);
\draw (0.0,1.0) arc (90:-90:0.5);
\node [draw=none] at (0.0, 0.0) { } 
child {node [b] at (0.0, 1.0) {  } edge from parent[draw=none] 
}
;
\draw (1.2,1.0) arc (90:270:0.5);
\draw (1.2,1.0) arc (90:-90:0.5);
\node [draw=none] at (1.2, 0.0) { } 
child {node [b] at (0.0, 1.0) {  } edge from parent[draw=none] 
child {node [b] at (-0.5, 1.0) {  }  
}
child {node [b] at (0.5, 1.0) {  }  
}
}
;
\node [b] at (2.3, 0.0) {  } 
;
\node [b] at (3.3, 0.0) {  } 
child {node [b] at (0.0, 1.0) {  }  
}
;
\node [b] at (4.3, 0.0) {  } 
child {node [b] at (0.0, 1.0) {  }  
child {node [b] at (0.0, 1.0) {  }  
}
}
;
}}

\newcommand{\forestBC}{
\tikz[planar forest ] {

\draw (-4.440892098500626e-16,1.0) arc (90:270:0.5);
\draw (-4.440892098500626e-16,1.0) arc (90:-90:0.5);
\node [draw=none] at (-4.440892098500626e-16, 0.0) { } 
child {node [b] at (0.0, 1.0) {  } edge from parent[draw=none] 
}
;
\draw (1.2,1.0) arc (90:270:0.5);
\draw (1.2,1.0) arc (90:-90:0.5);
\node [draw=none] at (1.2, 0.0) { } 
child {node [b] at (0.0, 1.0) {  } edge from parent[draw=none] 
child {node [b] at (-0.5, 1.0) {  }  
}
child {node [b] at (0.5, 1.0) {  }  
}
}
;
\node [b] at (2.3, 0.0) {  } 
;
}}

\newcommand{\forestCC}{
\tikz[planar forest ] {

\node [b] at (0.0, 0.0) {  } 
child {node [b] at (0.0, 1.0) {  }  
}
;
\node [b] at (1.0, 0.0) {  } 
child {node [b] at (0.0, 1.0) {  }  
child {node [b] at (0.0, 1.0) {  }  
}
}
;
}}

\newcommand{\forestDC}{
\tikz[planar forest ] {

\draw (0.0,1.0) arc (90:270:0.5);
\draw (0.0,1.0) arc (90:-90:0.5);
\node [draw=none] at (0.0, 0.0) { } 
child {node [b] at (0.0, 1.0) {  } edge from parent[draw=none] 
}
;
\node [b] at (1.1, 0.0) {  } 
;
}}

\newcommand{\forestEC}{
\tikz[planar forest ] {

\node [b] at (0.0, 0.0) {  } 
child {node [b] at (0.0, 1.0) {  }  
}
;
}}

\newcommand{\forestFC}{
\tikz[planar forest ] {

\draw (0.0,1.0) arc (90:270:0.5);
\draw (0.0,1.0) arc (90:-90:0.5);
\node [draw=none] at (0.0, 0.0) { } 
child {node [b] at (0.0, 1.0) {  } edge from parent[draw=none] 
child {node [b] at (-0.5, 1.0) {  }  
}
child {node [b] at (0.5, 1.0) {  }  
}
}
;
\node [b] at (1.5, 0.0) {  } 
child {node [b] at (0.0, 1.0) {  }  
child {node [b] at (0.0, 1.0) {  }  
}
}
;
}}

\newcommand{\forestGC}{
\tikz[planar forest ] {

\draw (0.0,1.0) arc (90:270:0.5);
\draw (0.0,1.0) arc (90:-90:0.5);
\node [draw=none] at (0.0, 0.0) { } 
child {node [b] at (0.0, 1.0) {  } edge from parent[draw=none] 
child {node [b] at (0.0, 1.0) {  }  
}
}
;
\node [b] at (1.1, 0.0) {  } 
child {node [b] at (0.0, 1.0) {  }  
}
;
}}

\newcommand{\forestHC}{
\tikz[planar forest ] {

\draw (-0.5,1.0) edge[-] (0.5,1.0);
\draw (-0.5,1.0) arc (90:270:0.5);
\draw (0.5,1.0) arc (90:-90:0.5);
\draw (-0.5,0.0) edge[-] (0.5,0.0);
\node [draw=none] at (0.0, 0.0) { } 
child {node [b] at (-0.5, 1.0) {  } edge from parent[draw=none] 
}
child {node [b] at (0.5, 1.0) {  } edge from parent[draw=none] 
}
;
\node [b] at (2.0, 0.0) {  } 
child {node [b] at (-0.5, 1.0) {  }  
}
child {node [b] at (0.5, 1.0) {  }  
}
;
}}

\newcommand{\forestIC}{
\tikz[planar forest ] {

\draw (-4.440892098500626e-16,1.0) arc (90:270:0.5);
\draw (-4.440892098500626e-16,1.0) arc (90:-90:0.5);
\node [draw=none] at (-4.440892098500626e-16, 0.0) { } 
child {node [b] at (0.0, 1.0) {  } edge from parent[draw=none] 
}
;
\draw (1.2,1.0) arc (90:270:0.5);
\draw (1.2,1.0) arc (90:-90:0.5);
\node [draw=none] at (1.2, 0.0) { } 
child {node [b] at (0.0, 1.0) {  } edge from parent[draw=none] 
}
;
\node [b] at (2.3, 0.0) {  } 
;
}}

\newcommand{\forestJC}{
\tikz[planar forest ] {

\draw (0.0,1.0) arc (90:270:0.5);
\draw (0.0,1.0) arc (90:-90:0.5);
\node [draw=none] at (0.0, 0.0) { } 
child {node [b] at (0.0, 1.0) {  } edge from parent[draw=none] 
}
;
\node [b] at (1.1, 0.0) {  } 
;
\node [b] at (2.1, 0.0) {  } 
child {node [b] at (0.0, 1.0) {  }  
}
;
}}

\newcommand{\forestKC}{
\tikz[planar forest ] {

\draw (0.0,1.0) arc (90:270:0.5);
\draw (0.0,1.0) arc (90:-90:0.5);
\node [draw=none] at (0.0, 0.0) { } 
child {node [b] at (0.0, 1.0) {  } edge from parent[draw=none] 
}
;
\node [b] at (1.1, 0.0) {  } 
;
}}

\newcommand{\forestLC}{
\tikz[planar forest ] {

\node [b] at (0.0, 0.0) {  } 
child {node [b] at (0.0, 1.0) {  }  
}
;
}}

\newcommand{\forestMC}{
\tikz[planar forest ] {

\node [b] at (0.0, 0.0) {  } 
;
}}

\newcommand{\forestNC}{
\tikz[planar forest ] {

\draw (0.0,1.0) arc (90:270:0.5);
\draw (0.0,1.0) arc (90:-90:0.5);
\node [draw=none] at (0.0, 0.0) { } 
child {node [b] at (0.0, 1.0) {  } edge from parent[draw=none] 
}
;
\node [b] at (1.1, 0.0) {  } 
child {node [b] at (0.0, 1.0) {  }  
}
;
}}

\newcommand{\forestOC}{
\tikz[planar forest ] {

\draw (-4.440892098500626e-16,1.0) arc (90:270:0.5);
\draw (-4.440892098500626e-16,1.0) arc (90:-90:0.5);
\node [draw=none] at (-4.440892098500626e-16, 0.0) { } 
child {node [b] at (0.0, 1.0) {  } edge from parent[draw=none] 
}
;
\draw (1.2,1.0) arc (90:270:0.5);
\draw (1.2,1.0) arc (90:-90:0.5);
\node [draw=none] at (1.2, 0.0) { } 
child {node [b] at (0.0, 1.0) {  } edge from parent[draw=none] 
child {node [b] at (0.0, 1.0) {  }  
}
}
;
\node [b] at (2.3, 0.0) {  } 
;
\node [b] at (3.3, 0.0) {  } 
child {node [b] at (0.0, 1.0) {  }  
}
;
}}

\newcommand{\forestPC}{
\tikz[planar forest ] {

\draw (-2.220446049250313e-16,1.0) arc (90:270:0.5);
\draw (-2.220446049250313e-16,1.0) arc (90:-90:0.5);
\node [draw=none] at (-2.220446049250313e-16, 0.0) { } 
child {node [b] at (0.0, 1.0) {  } edge from parent[draw=none] 
}
;
\draw (1.2,1.0) arc (90:270:0.5);
\draw (1.2,1.0) arc (90:-90:0.5);
\node [draw=none] at (1.2, 0.0) { } 
child {node [b] at (0.0, 1.0) {  } edge from parent[draw=none] 
child {node [b] at (0.0, 1.0) {  }  
}
}
;
}}

\newcommand{\forestQC}{
\tikz[planar forest ] {

\draw (0.0,1.0) arc (90:270:0.5);
\draw (0.0,1.0) arc (90:-90:0.5);
\node [draw=none] at (0.0, 0.0) { } 
child {node [b] at (0.0, 1.0) {  } edge from parent[draw=none] 
}
;
}}

\newcommand{\forestRC}{
\tikz[planar forest ] {

\draw (0.0,1.0) arc (90:270:0.5);
\draw (0.0,1.0) arc (90:-90:0.5);
\node [draw=none] at (0.0, 0.0) { } 
child {node [b] at (0.0, 1.0) {  } edge from parent[draw=none] 
child {node [b] at (0.0, 1.0) {  }  
}
}
;
}}

\newcommand{\forestSC}{
\tikz[planar forest ] {

\draw (0.0,1.0) arc (90:270:0.5);
\draw (0.0,1.0) arc (90:-90:0.5);
\node [draw=none] at (0.0, 0.0) { } 
child {node [b] at (0.0, 1.0) {  } edge from parent[draw=none] 
child {node [b] at (0.0, 1.0) {  }  
}
}
;
}}

\newcommand{\forestTC}{
\tikz[planar forest ] {

\draw (0.0,1.0) arc (90:270:0.5);
\draw (0.0,1.0) arc (90:-90:0.5);
\node [draw=none] at (0.0, 0.0) { } 
child {node [b] at (0.0, 1.0) {  } edge from parent[draw=none] 
}
;
}}

\newcommand{\forestUC}{
\tikz[planar forest ] {

\draw (-2.220446049250313e-16,1.0) arc (90:270:0.5);
\draw (-2.220446049250313e-16,1.0) arc (90:-90:0.5);
\node [draw=none] at (-2.220446049250313e-16, 0.0) { } 
child {node [b] at (0.0, 1.0) {  } edge from parent[draw=none] 
}
;
\draw (1.2,1.0) arc (90:270:0.5);
\draw (1.2,1.0) arc (90:-90:0.5);
\node [draw=none] at (1.2, 0.0) { } 
child {node [b] at (0.0, 1.0) {  } edge from parent[draw=none] 
child {node [b] at (0.0, 1.0) {  }  
}
}
;
}}

\newcommand{\forestVC}{
\tikz[planar forest ] {

\node [b] at (0.0, 0.0) {  } 
;
\node [b] at (1.0, 0.0) {  } 
child {node [b] at (0.0, 1.0) {  }  
}
;
}}

\newcommand{\forestWC}{
\tikz[planar forest ] {

\node [b] at (0.0, 0.0) {  } 
;
}}

\newcommand{\forestXC}{
\tikz[planar forest ] {

\node [b] at (0.0, 0.0) {  } 
child {node [b] at (0.0, 1.0) {  }  
}
;
}}

\newcommand{\forestYC}{
\tikz[planar forest ] {

\node [b] at (0.0, 0.0) {  } 
child {node [b] at (0.0, 1.0) {  }  
}
;
}}

\newcommand{\forestAD}{
\tikz[planar forest ] {

\node [b] at (0.0, 0.0) {  } 
;
}}

\newcommand{\forestBD}{
\tikz[planar forest ] {

\node [b] at (0.0, 0.0) {  } 
;
\node [b] at (1.0, 0.0) {  } 
child {node [b] at (0.0, 1.0) {  }  
}
;
}}

\newcommand{\forestCD}{
\tikz[planar forest ] {

\draw (0.0,1.0) arc (90:270:0.5);
\draw (0.0,1.0) arc (90:-90:0.5);
\node [draw=none] at (0.0, 0.0) { } 
child {node [b] at (0.0, 1.0) {  } edge from parent[draw=none] 
}
;
\node [b] at (1.1, 0.0) {  } 
;
}}

\newcommand{\forestDD}{
\tikz[planar forest ] {

\draw (0.0,1.0) arc (90:270:0.5);
\draw (0.0,1.0) arc (90:-90:0.5);
\node [draw=none] at (0.0, 0.0) { } 
child {node [b] at (0.0, 1.0) {  } edge from parent[draw=none] 
child {node [b] at (0.0, 1.0) {  }  
}
}
;
\node [b] at (1.1, 0.0) {  } 
child {node [b] at (0.0, 1.0) {  }  
}
;
}}

\newcommand{\forestED}{
\tikz[planar forest ] {

\draw (0.0,1.0) arc (90:270:0.5);
\draw (0.0,1.0) arc (90:-90:0.5);
\node [draw=none] at (0.0, 0.0) { } 
child {node [b] at (0.0, 1.0) {  } edge from parent[draw=none] 
}
;
\node [b] at (1.1, 0.0) {  } 
;
}}

\newcommand{\forestFD}{
\tikz[planar forest ] {

\draw (0.0,1.0) arc (90:270:0.5);
\draw (0.0,1.0) arc (90:-90:0.5);
\node [draw=none] at (0.0, 0.0) { } 
child {node [b] at (0.0, 1.0) {  } edge from parent[draw=none] 
child {node [b] at (0.0, 1.0) {  }  
}
}
;
\node [b] at (1.1, 0.0) {  } 
child {node [b] at (0.0, 1.0) {  }  
}
;
}}

\newcommand{\forestGD}{
\tikz[planar forest ] {

\draw (0.0,1.0) arc (90:270:0.5);
\draw (0.0,1.0) arc (90:-90:0.5);
\node [draw=none] at (0.0, 0.0) { } 
child {node [b] at (0.0, 1.0) {  } edge from parent[draw=none] 
}
;
\node [b] at (1.1, 0.0) {  } 
;
}}

\newcommand{\forestHD}{
\tikz[planar forest ] {

\draw (0.0,1.0) arc (90:270:0.5);
\draw (0.0,1.0) arc (90:-90:0.5);
\node [draw=none] at (0.0, 0.0) { } 
child {node [b] at (0.0, 1.0) {  } edge from parent[draw=none] 
child {node [b] at (0.0, 1.0) {  }  
}
}
;
\node [b] at (1.1, 0.0) {  } 
child {node [b] at (0.0, 1.0) {  }  
}
;
}}

\newcommand{\forestID}{
\tikz[planar forest ] {

\draw (0.0,1.0) arc (90:270:0.5);
\draw (0.0,1.0) arc (90:-90:0.5);
\node [draw=none] at (0.0, 0.0) { } 
child {node [b] at (0.0, 1.0) {  } edge from parent[draw=none] 
child {node [b] at (0.0, 1.0) {  }  
}
}
;
\node [b] at (1.1, 0.0) {  } 
child {node [b] at (0.0, 1.0) {  }  
}
;
}}

\newcommand{\forestJD}{
\tikz[planar forest ] {

\draw (0.0,1.0) arc (90:270:0.5);
\draw (0.0,1.0) arc (90:-90:0.5);
\node [draw=none] at (0.0, 0.0) { } 
child {node [b] at (0.0, 1.0) {  } edge from parent[draw=none] 
}
;
\node [b] at (1.1, 0.0) {  } 
;
}}

\newcommand{\forestKD}{
\tikz[planar forest ] {

\draw (0.0,1.0) arc (90:270:0.5);
\draw (0.0,1.0) arc (90:-90:0.5);
\node [draw=none] at (0.0, 0.0) { } 
child {node [b] at (0.0, 1.0) {  } edge from parent[draw=none] 
}
;
\node [b] at (1.1, 0.0) {  } 
;
}}

\newcommand{\forestLD}{
\tikz[planar forest ] {

\draw (0.0,1.0) arc (90:270:0.5);
\draw (0.0,1.0) arc (90:-90:0.5);
\node [draw=none] at (0.0, 0.0) { } 
child {node [b] at (0.0, 1.0) {  } edge from parent[draw=none] 
child {node [b] at (0.0, 1.0) {  }  
}
}
;
\node [b] at (1.1, 0.0) {  } 
child {node [b] at (0.0, 1.0) {  }  
}
;
}}

\newcommand{\forestMD}{
\tikz[planar forest ] {

\draw (-4.440892098500626e-16,1.0) arc (90:270:0.5);
\draw (-4.440892098500626e-16,1.0) arc (90:-90:0.5);
\node [draw=none] at (-4.440892098500626e-16, 0.0) { } 
child {node [b] at (0.0, 1.0) {  } edge from parent[draw=none] 
}
;
\draw (1.2,1.0) arc (90:270:0.5);
\draw (1.2,1.0) arc (90:-90:0.5);
\node [draw=none] at (1.2, 0.0) { } 
child {node [b] at (0.0, 1.0) {  } edge from parent[draw=none] 
child {node [b] at (0.0, 1.0) {  }  
}
}
;
\node [b] at (2.3, 0.0) {  } 
;
}}

\newcommand{\forestND}{
\tikz[planar forest ] {

\draw (-4.440892098500626e-16,1.0) arc (90:270:0.5);
\draw (-4.440892098500626e-16,1.0) arc (90:-90:0.5);
\node [draw=none] at (-4.440892098500626e-16, 0.0) { } 
child {node [b] at (0.0, 1.0) {  } edge from parent[draw=none] 
}
;
\draw (1.2,1.0) arc (90:270:0.5);
\draw (1.2,1.0) arc (90:-90:0.5);
\node [draw=none] at (1.2, 0.0) { } 
child {node [b] at (0.0, 1.0) {  } edge from parent[draw=none] 
child {node [b] at (0.0, 1.0) {  }  
}
}
;
\node [b] at (2.3, 0.0) {  } 
;
}}

\newcommand{\forestOD}{
\tikz[planar forest ] {

\draw (0.0,1.0) arc (90:270:0.5);
\draw (0.0,1.0) arc (90:-90:0.5);
\node [draw=none] at (0.0, 0.0) { } 
child {node [b] at (0.0, 1.0) {  } edge from parent[draw=none] 
}
;
}}

\newcommand{\forestPD}{
\tikz[planar forest ] {

\draw (0.0,1.0) arc (90:270:0.5);
\draw (0.0,1.0) arc (90:-90:0.5);
\node [draw=none] at (0.0, 0.0) { } 
child {node [b] at (0.0, 1.0) {  } edge from parent[draw=none] 
}
;
}}

\newcommand{\forestQD}{
\tikz[planar forest ] {

\draw (0.0,1.0) arc (90:270:0.5);
\draw (0.0,1.0) arc (90:-90:0.5);
\node [draw=none] at (0.0, 0.0) { } 
child {node [b] at (0.0, 1.0) {  } edge from parent[draw=none] 
child {node [b] at (0.0, 1.0) {  }  
}
}
;
}}

\newcommand{\forestRD}{
\tikz[planar forest ] {

\node [b] at (0.0, 0.0) {  } 
;
}}

\newcommand{\forestSD}{
\tikz[planar forest ] {

\draw (0.0,1.0) arc (90:270:0.5);
\draw (0.0,1.0) arc (90:-90:0.5);
\node [draw=none] at (0.0, 0.0) { } 
child {node [b] at (0.0, 1.0) {  } edge from parent[draw=none] 
}
;
}}

\newcommand{\forestTD}{
\tikz[planar forest ] {

\draw (0.0,1.0) arc (90:270:0.5);
\draw (0.0,1.0) arc (90:-90:0.5);
\node [draw=none] at (0.0, 0.0) { } 
child {node [b] at (0.0, 1.0) {  } edge from parent[draw=none] 
child {node [b] at (0.0, 1.0) {  }  
}
}
;
}}

\newcommand{\forestUD}{
\tikz[planar forest ] {

\node [b] at (0.0, 0.0) {  } 
;
}}

\newcommand{\forestVD}{
\tikz[planar forest ] {

\node [b] at (0.0, 0.0) {  } 
;
}}

\newcommand{\forestWD}{
\tikz[planar forest ] {

\draw (-4.440892098500626e-16,1.0) arc (90:270:0.5);
\draw (-4.440892098500626e-16,1.0) arc (90:-90:0.5);
\node [draw=none] at (-4.440892098500626e-16, 0.0) { } 
child {node [b] at (0.0, 1.0) {  } edge from parent[draw=none] 
}
;
\draw (1.2,1.0) arc (90:270:0.5);
\draw (1.2,1.0) arc (90:-90:0.5);
\node [draw=none] at (1.2, 0.0) { } 
child {node [b] at (0.0, 1.0) {  } edge from parent[draw=none] 
child {node [b] at (0.0, 1.0) {  }  
}
}
;
\node [b] at (2.3, 0.0) {  } 
;
}}

\newcommand{\forestXD}{
\tikz[planar forest ] {

\draw (-4.440892098500626e-16,1.0) arc (90:270:0.5);
\draw (-4.440892098500626e-16,1.0) arc (90:-90:0.5);
\node [draw=none] at (-4.440892098500626e-16, 0.0) { } 
child {node [b] at (0.0, 1.0) {  } edge from parent[draw=none] 
}
;
\draw (1.2,1.0) arc (90:270:0.5);
\draw (1.2,1.0) arc (90:-90:0.5);
\node [draw=none] at (1.2, 0.0) { } 
child {node [b] at (0.0, 1.0) {  } edge from parent[draw=none] 
child {node [b] at (0.0, 1.0) {  }  
}
}
;
\node [b] at (2.3, 0.0) {  } 
;
}}

\newcommand{\forestYD}{
\tikz[planar forest ] {

\draw (0.0,1.0) arc (90:270:0.5);
\draw (0.0,1.0) arc (90:-90:0.5);
\node [draw=none] at (0.0, 0.0) { } 
child {node [b] at (0.0, 1.0) {  } edge from parent[draw=none] 
}
;
\node [b] at (1.1, 0.0) {  } 
;
}}

\newcommand{\forestAE}{
\tikz[planar forest ] {

\draw (0.0,1.0) arc (90:270:0.5);
\draw (0.0,1.0) arc (90:-90:0.5);
\node [draw=none] at (0.0, 0.0) { } 
child {node [b] at (0.0, 1.0) {  } edge from parent[draw=none] 
}
;
\node [b] at (1.1, 0.0) {  } 
;
}}

\newcommand{\forestBE}{
\tikz[planar forest ] {

\node [b] at (0.0, 0.0) {  } 
;
}}

\newcommand{\forestCE}{
\tikz[planar forest ] {

\draw (-4.440892098500626e-16,1.0) arc (90:270:0.5);
\draw (-4.440892098500626e-16,1.0) arc (90:-90:0.5);
\node [draw=none] at (-4.440892098500626e-16, 0.0) { } 
child {node [b] at (0.0, 1.0) {  } edge from parent[draw=none] 
}
;
\draw (1.2,1.0) arc (90:270:0.5);
\draw (1.2,1.0) arc (90:-90:0.5);
\node [draw=none] at (1.2, 0.0) { } 
child {node [b] at (0.0, 1.0) {  } edge from parent[draw=none] 
}
;
\node [b] at (2.3, 0.0) {  } 
;
}}

\newcommand{\forestDE}{
\tikz[planar forest ] {

\draw (-4.440892098500626e-16,1.0) arc (90:270:0.5);
\draw (-4.440892098500626e-16,1.0) arc (90:-90:0.5);
\node [draw=none] at (-4.440892098500626e-16, 0.0) { } 
child {node [b] at (0.0, 1.0) {  } edge from parent[draw=none] 
}
;
\draw (1.2,1.0) arc (90:270:0.5);
\draw (1.2,1.0) arc (90:-90:0.5);
\node [draw=none] at (1.2, 0.0) { } 
child {node [b] at (0.0, 1.0) {  } edge from parent[draw=none] 
child {node [b] at (0.0, 1.0) {  }  
}
}
;
\node [b] at (2.3, 0.0) {  } 
;
}}

\newcommand{\forestEE}{
\tikz[planar forest ] {

\draw (-1.0,1.0) edge[-] (0.0,1.0);
\draw (0.0,1.0) edge[-] (1.0,1.0);
\draw (-1.0,1.0) arc (90:270:0.5);
\draw (1.0,1.0) arc (90:-90:0.5);
\draw (-1.0,0.0) edge[-] (1.0,0.0);
\node [draw=none] at (0.0, 0.0) { } 
child {node [b] at (-1.0, 1.0) {  } edge from parent[draw=none] 
child {node [b] at (0.0, 1.0) {  }  
child {node [b] at (0.0, 1.0) {  }  
}
}
}
child {node [b] at (0.0, 1.0) {  } edge from parent[draw=none] 
}
child {node [b] at (1.0, 1.0) {  } edge from parent[draw=none] 
child {node [b] at (0.0, 1.0) {  }  
}
}
;
}}

\newcommand{\forestFE}{
\tikz[planar forest ] {

\node [b] at (0.0, 0.0) {  } 
child {node [b] at (0.0, 1.0) {  }  
}
;
}}

\newcommand{\forestGE}{
\tikz[planar forest ] {

\node [b] at (0.0, 0.0) {  } 
;
}}

\newcommand{\forestHE}{
\tikz[planar forest ] {

\node [b] at (0.0, 0.0) {  } 
;
}}

\newcommand{\forestIE}{
\tikz[planar forest ] {

\node [b] at (0.0, 0.0) {  } 
child {node [b] at (0.0, 1.0) {  }  
}
;
\node [b] at (1.0, 0.0) {  } 
;
}}

\newcommand{\forestJE}{
\tikz[planar forest ] {

\node [b] at (0.0, 0.0) {  } 
;
\node [b] at (1.0, 0.0) {  } 
;
}}

\newcommand{\forestKE}{
\tikz[planar forest ] {

\draw (-1.0,1.0) edge[-] (0.0,1.0);
\draw (0.0,1.0) edge[-] (1.0,1.0);
\draw (-1.0,1.0) arc (90:270:0.5);
\draw (1.0,1.0) arc (90:-90:0.5);
\draw (-1.0,0.0) edge[-] (1.0,0.0);
\node [draw=none] at (0.0, 0.0) { } 
child {node [b] at (-1.0, 1.0) {  } edge from parent[draw=none] 
child {node [b] at (0.0, 1.0) {  }  
child {node [b] at (0.0, 1.0) {  }  
}
}
}
child {node [b] at (0.0, 1.0) {  } edge from parent[draw=none] 
}
child {node [b] at (1.0, 1.0) {  } edge from parent[draw=none] 
child {node [b] at (0.0, 1.0) {  }  
}
}
;
}}

\newcommand{\forestLE}{
\tikz[planar forest ] {

\draw (-1.0,1.0) edge[-] (0.0,1.0);
\draw (0.0,1.0) edge[-] (1.0,1.0);
\draw (-1.0,1.0) arc (90:270:0.5);
\draw (1.0,1.0) arc (90:-90:0.5);
\draw (-1.0,0.0) edge[-] (1.0,0.0);
\node [draw=none] at (0.0, 0.0) { } 
child {node [b] at (-1.0, 1.0) {  } edge from parent[draw=none] 
}
child {node [b] at (0.0, 1.0) {  } edge from parent[draw=none] 
}
child {node [b] at (1.0, 1.0) {  } edge from parent[draw=none] 
child {node [b] at (0.0, 1.0) {  }  
}
}
;
}}

\newcommand{\forestME}{
\tikz[planar forest ] {

\draw (-1.0,1.0) edge[-] (0.0,1.0);
\draw (0.0,1.0) edge[-] (1.0,1.0);
\draw (-1.0,1.0) arc (90:270:0.5);
\draw (1.0,1.0) arc (90:-90:0.5);
\draw (-1.0,0.0) edge[-] (1.0,0.0);
\node [draw=none] at (0.0, 0.0) { } 
child {node [b] at (-1.0, 1.0) {  } edge from parent[draw=none] 
child {node [b] at (0.0, 1.0) {  }  
}
}
child {node [b] at (0.0, 1.0) {  } edge from parent[draw=none] 
}
child {node [b] at (1.0, 1.0) {  } edge from parent[draw=none] 
child {node [b] at (0.0, 1.0) {  }  
}
}
;
}}

\newcommand{\forestNE}{
\tikz[planar forest ] {

\draw (-1.0,1.0) edge[-] (0.0,1.0);
\draw (0.0,1.0) edge[-] (1.0,1.0);
\draw (-1.0,1.0) arc (90:270:0.5);
\draw (1.0,1.0) arc (90:-90:0.5);
\draw (-1.0,0.0) edge[-] (1.0,0.0);
\node [draw=none] at (0.0, 0.0) { } 
child {node [b] at (-1.0, 1.0) {  } edge from parent[draw=none] 
child {node [b] at (0.0, 1.0) {  }  
child {node [b] at (0.0, 1.0) {  }  
}
}
}
child {node [b] at (0.0, 1.0) {  } edge from parent[draw=none] 
}
child {node [b] at (1.0, 1.0) {  } edge from parent[draw=none] 
}
;
}}

\newcommand{\forestOE}{
\tikz[planar forest ] {

\draw (-1.0,1.0) edge[-] (0.0,1.0);
\draw (0.0,1.0) edge[-] (1.0,1.0);
\draw (-1.0,1.0) arc (90:270:0.5);
\draw (1.0,1.0) arc (90:-90:0.5);
\draw (-1.0,0.0) edge[-] (1.0,0.0);
\node [draw=none] at (0.0, 0.0) { } 
child {node [b] at (-1.0, 1.0) {  } edge from parent[draw=none] 
}
child {node [b] at (0.0, 1.0) {  } edge from parent[draw=none] 
}
child {node [b] at (1.0, 1.0) {  } edge from parent[draw=none] 
}
;
}}

\newcommand{\forestPE}{
\tikz[planar forest ] {

\draw (-1.0,1.0) edge[-] (0.0,1.0);
\draw (0.0,1.0) edge[-] (1.0,1.0);
\draw (-1.0,1.0) arc (90:270:0.5);
\draw (1.0,1.0) arc (90:-90:0.5);
\draw (-1.0,0.0) edge[-] (1.0,0.0);
\node [draw=none] at (0.0, 0.0) { } 
child {node [b] at (-1.0, 1.0) {  } edge from parent[draw=none] 
child {node [b] at (0.0, 1.0) {  }  
}
}
child {node [b] at (0.0, 1.0) {  } edge from parent[draw=none] 
}
child {node [b] at (1.0, 1.0) {  } edge from parent[draw=none] 
}
;
}}

\newcommand{\forestQE}{
\tikz[planar forest ] {

\draw (0.0,1.0) arc (90:270:0.5);
\draw (0.0,1.0) arc (90:-90:0.5);
\node [draw=none] at (0.0, 0.0) { } 
child {node [b] at (0.0, 1.0) {  } edge from parent[draw=none] 
child {node [b] at (0.0, 1.0) {  }  
}
}
;
\node [b] at (1.1, 0.0) {  } 
;
}}

\newcommand{\forestRE}{
\tikz[planar forest ] {

\draw (-0.5,1.0) edge[-] (0.5,1.0);
\draw (-0.5,1.0) arc (90:270:0.5);
\draw (0.5,1.0) arc (90:-90:0.5);
\draw (-0.5,0.0) edge[-] (0.5,0.0);
\node [draw=none] at (0.0, 0.0) { } 
child {node [b] at (-0.5, 1.0) {  } edge from parent[draw=none] 
}
child {node [b] at (0.5, 1.0) {  } edge from parent[draw=none] 
}
;
\node [b] at (1.6, 0.0) {  } 
;
}}

\newcommand{\forestSE}{
\tikz[planar forest ] {

\node [b] at (0.0, 0.0) {  } 
child {node [b] at (-0.5, 1.0) {  }  
}
child {node [b] at (0.5, 1.0) {  }  
}
;
}}

\newcommand{\forestTE}{
\tikz[planar forest ] {

\draw (0.0,1.0) arc (90:270:0.5);
\draw (0.0,1.0) arc (90:-90:0.5);
\node [draw=none] at (0.0, 0.0) { } 
child {node [b] at (0.0, 1.0) {  } edge from parent[draw=none] 
}
;
\node [b] at (1.1, 0.0) {  } 
child {node [b] at (0.0, 1.0) {  }  
}
;
}}

\newcommand{\forestUE}{
\tikz[planar forest ] {

\draw (0.0,1.0) arc (90:270:0.5);
\draw (0.0,1.0) arc (90:-90:0.5);
\node [draw=none] at (0.0, 0.0) { } 
child {node [b] at (0.0, 1.0) {  } edge from parent[draw=none] 
child {node [b] at (0.0, 1.0) {  }  
child {node [b] at (0.0, 1.0) {  }  
}
}
}
;
\node [b] at (1.1, 0.0) {  } 
;
}}

\newcommand{\forestVE}{
\tikz[planar forest ] {

\draw (-0.5,1.0) edge[-] (0.5,1.0);
\draw (-0.5,1.0) arc (90:270:0.5);
\draw (0.5,1.0) arc (90:-90:0.5);
\draw (-0.5,0.0) edge[-] (0.5,0.0);
\node [draw=none] at (0.0, 0.0) { } 
child {node [b] at (-0.5, 1.0) {  } edge from parent[draw=none] 
}
child {node [b] at (0.5, 1.0) {  } edge from parent[draw=none] 
child {node [b] at (0.0, 1.0) {  }  
}
}
;
\node [b] at (1.6, 0.0) {  } 
;
}}

\newcommand{\forestWE}{
\tikz[planar forest ] {

\draw (-1.0,1.0) edge[-] (0.0,1.0);
\draw (0.0,1.0) edge[-] (1.0,1.0);
\draw (-1.0,1.0) arc (90:270:0.5);
\draw (1.0,1.0) arc (90:-90:0.5);
\draw (-1.0,0.0) edge[-] (1.0,0.0);
\node [draw=none] at (0.0, 0.0) { } 
child {node [b] at (-1.0, 1.0) {  } edge from parent[draw=none] 
}
child {node [b] at (0.0, 1.0) {  } edge from parent[draw=none] 
}
child {node [b] at (1.0, 1.0) {  } edge from parent[draw=none] 
}
;
\node [b] at (2.1, 0.0) {  } 
;
}}

\newcommand{\forestXE}{
\tikz[planar forest ] {

\node [b] at (0.0, 0.0) {  } 
child {node [b] at (-0.5, 1.0) {  }  
}
child {node [b] at (0.5, 1.0) {  }  
child {node [b] at (0.0, 1.0) {  }  
}
}
;
}}

\newcommand{\forestYE}{
\tikz[planar forest ] {

\node [b] at (0.0, 0.0) {  } 
child {node [b] at (0.0, 1.0) {  }  
child {node [b] at (-0.5, 1.0) {  }  
}
child {node [b] at (0.5, 1.0) {  }  
}
}
;
}}

\newcommand{\forestAF}{
\tikz[planar forest ] {

\draw (0.0,1.0) arc (90:270:0.5);
\draw (0.0,1.0) arc (90:-90:0.5);
\node [draw=none] at (0.0, 0.0) { } 
child {node [b] at (0.0, 1.0) {  } edge from parent[draw=none] 
}
;
\node [b] at (1.1, 0.0) {  } 
child {node [b] at (0.0, 1.0) {  }  
child {node [b] at (0.0, 1.0) {  }  
}
}
;
}}

\newcommand{\forestBF}{
\tikz[planar forest ] {

\draw (0.0,1.0) arc (90:270:0.5);
\draw (0.0,1.0) arc (90:-90:0.5);
\node [draw=none] at (0.0, 0.0) { } 
child {node [b] at (0.0, 1.0) {  } edge from parent[draw=none] 
child {node [b] at (-0.5, 1.0) {  }  
}
child {node [b] at (0.5, 1.0) {  }  
}
}
;
\node [b] at (1.1, 0.0) {  } 
;
}}

\newcommand{\forestCF}{
\tikz[planar forest ] {

\draw (-0.5,1.0) edge[-] (0.5,1.0);
\draw (-0.5,1.0) arc (90:270:0.5);
\draw (0.5,1.0) arc (90:-90:0.5);
\draw (-0.5,0.0) edge[-] (0.5,0.0);
\node [draw=none] at (0.0, 0.0) { } 
child {node [b] at (-0.5, 1.0) {  } edge from parent[draw=none] 
}
child {node [b] at (0.5, 1.0) {  } edge from parent[draw=none] 
child {node [b] at (0.0, 1.0) {  }  
}
}
;
\node [b] at (1.6, 0.0) {  } 
;
}}

\newcommand{\forestDF}{
\tikz[planar forest ] {

\node [b] at (0.0, 0.0) {  } 
child {node [b] at (0.0, 1.0) {  }  
child {node [b] at (-0.5, 1.0) {  }  
}
child {node [b] at (0.5, 1.0) {  }  
}
}
;
}}

\newcommand{\forestEF}{
\tikz[planar forest ] {

\node [b] at (0.0, 0.0) {  } 
child {node [b] at (-0.5, 1.0) {  }  
}
child {node [b] at (0.5, 1.0) {  }  
child {node [b] at (0.0, 1.0) {  }  
}
}
;
}}

\newcommand{\forestFF}{
\tikz[planar forest ] {

\node [b] at (0.0, 0.0) {  } 
child {node [b] at (-1.0, 1.0) {  }  
}
child {node [b] at (0.0, 1.0) {  }  
}
child {node [b] at (1.0, 1.0) {  }  
}
;
}}

\newcommand{\forestGF}{
\tikz[planar forest ] {

\draw (0.0,1.0) arc (90:270:0.5);
\draw (0.0,1.0) arc (90:-90:0.5);
\node [draw=none] at (0.0, 0.0) { } 
child {node [b] at (0.0, 1.0) {  } edge from parent[draw=none] 
}
;
\node [b] at (1.5, 0.0) {  } 
child {node [b] at (-0.5, 1.0) {  }  
}
child {node [b] at (0.5, 1.0) {  }  
}
;
}}

\newcommand{\forestHF}{
\tikz[planar forest ] {

\draw (-4.440892098500626e-16,1.0) arc (90:270:0.5);
\draw (-4.440892098500626e-16,1.0) arc (90:-90:0.5);
\node [draw=none] at (-4.440892098500626e-16, 0.0) { } 
child {node [b] at (0.0, 1.0) {  } edge from parent[draw=none] 
}
;
\draw (1.2,1.0) arc (90:270:0.5);
\draw (1.2,1.0) arc (90:-90:0.5);
\node [draw=none] at (1.2, 0.0) { } 
child {node [b] at (0.0, 1.0) {  } edge from parent[draw=none] 
child {node [b] at (0.0, 1.0) {  }  
}
}
;
\node [b] at (2.3, 0.0) {  } 
;
}}

\newcommand{\forestIF}{
\tikz[planar forest ] {

\draw (4.440892098500626e-16,1.0) arc (90:270:0.5);
\draw (4.440892098500626e-16,1.0) arc (90:-90:0.5);
\node [draw=none] at (4.440892098500626e-16, 0.0) { } 
child {node [b] at (0.0, 1.0) {  } edge from parent[draw=none] 
}
;
\draw (1.2,1.0) edge[-] (2.2,1.0);
\draw (1.2,1.0) arc (90:270:0.5);
\draw (2.2,1.0) arc (90:-90:0.5);
\draw (1.2,0.0) edge[-] (2.2,0.0);
\node [draw=none] at (1.7, 0.0) { } 
child {node [b] at (-0.5, 1.0) {  } edge from parent[draw=none] 
}
child {node [b] at (0.5000000000000002, 1.0) {  } edge from parent[draw=none] 
}
;
\node [b] at (3.3, 0.0) {  } 
;
}}

\newcommand{\forestJF}{
\tikz[planar forest ] {

\draw (0.0,1.0) arc (90:270:0.5);
\draw (0.0,1.0) arc (90:-90:0.5);
\node [draw=none] at (0.0, 0.0) { } 
child {node [b] at (0.0, 1.0) {  } edge from parent[draw=none] 
child {node [b] at (0.0, 1.0) {  }  
child {node [b] at (0.0, 1.0) {  }  
}
}
}
;
\node [b] at (1.1, 0.0) {  } 
;
}}

\newcommand{\forestKF}{
\tikz[planar forest ] {

\draw (0.0,1.0) arc (90:270:0.5);
\draw (0.0,1.0) arc (90:-90:0.5);
\node [draw=none] at (0.0, 0.0) { } 
child {node [b] at (0.0, 1.0) {  } edge from parent[draw=none] 
}
;
\node [b] at (1.5, 0.0) {  } 
child {node [b] at (-0.5, 1.0) {  }  
}
child {node [b] at (0.5, 1.0) {  }  
}
;
}}

\newcommand{\forestLF}{
\tikz[planar forest ] {

\draw (-4.440892098500626e-16,1.0) arc (90:270:0.5);
\draw (-4.440892098500626e-16,1.0) arc (90:-90:0.5);
\node [draw=none] at (-4.440892098500626e-16, 0.0) { } 
child {node [b] at (0.0, 1.0) {  } edge from parent[draw=none] 
}
;
\draw (1.2,1.0) arc (90:270:0.5);
\draw (1.2,1.0) arc (90:-90:0.5);
\node [draw=none] at (1.2, 0.0) { } 
child {node [b] at (0.0, 1.0) {  } edge from parent[draw=none] 
}
;
\node [b] at (2.3, 0.0) {  } 
child {node [b] at (0.0, 1.0) {  }  
}
;
}}

\newcommand{\forestMF}{
\tikz[planar forest ] {

\draw (0.0,1.0) arc (90:270:0.5);
\draw (0.0,1.0) arc (90:-90:0.5);
\node [draw=none] at (0.0, 0.0) { } 
child {node [b] at (0.0, 1.0) {  } edge from parent[draw=none] 
child {node [b] at (0.0, 1.0) {  }  
}
}
;
\node [b] at (1.1, 0.0) {  } 
child {node [b] at (0.0, 1.0) {  }  
}
;
}}

\usepackage{graphicx}
\usepackage[all]{xy}
\usepackage{axodraw}
\usepackage{shuffle}

\newcolumntype{C}{>{$}c<{$}}

\allowdisplaybreaks

\newtheorem{theorem}{Theorem}[section]
\newtheorem{definition}[theorem]{Definition}
\newtheorem*{definition*}{Definition}

\newtheorem{proposition}[theorem]{Proposition}
\newtheorem{lemma}[theorem]{Lemma}
\newtheorem{remark}[theorem]{Remark}
\newtheorem*{remark*}{Remark}

\newtheorem*{remarks*}{Remarks}
\newtheorem{corollary}[theorem]{Corollary}

\newtheorem*{notation*}{Notation}
\newtheorem{ex}[theorem]{Example}
\newtheorem*{ex*}{Example}

\newtheorem*{exs*}{Examples}

\newtheorem*{app*}{Application}
\newtheorem{conjecture*}{Conjecture}
\newcommand{\kk}{\mathbf k}

\newcommand{\pil}{\rightarrow}

\newcommand{\End}{\text{End}}

\newcommand{\Der}{\text{Der}}

\newcommand{\ignore}[1]{{}}

\def\fleche#1{\mathop{\hbox to #1 mm{\rightarrowfill}}\limits}
\def\gfleche#1{\mathop{\hbox to #1 mm{\leftarrowfill}}\limits}
\def\inj#1{\mathop{\hbox to #1 mm{$\lhook\joinrel$\rightarrowfill}}\limits}
\def\ginj#1{\mathop{\hbox to #1 mm{\leftarrowfill$\joinrel\rhook$}}\limits}
\def\surj#1{\mathop{\hbox to #1 mm{\rightarrowfill\hskip 2pt\llap{$\rightarrow$}}}\limits}
\def\gsurj#1{\mathop{\hbox to #1 mm{\rlap{$\leftarrow$}\hskip 2pt \leftarrowfill}}\limits}

 \newcommand{\delete}[1]{}

\newcommand{\mop}[1]{\mathop{\hbox {\rm #1}}\nolimits}

\def \restr#1{\mathstrut_{\textstyle |}\raise-8pt\hbox{$\scriptstyle #1$}}
\def \srestr#1{\mathstrut_{\scriptstyle |}\hbox to
  -1.5pt{}\raise-4pt\hbox{$\scriptscriptstyle #1$}}

\def\shift#1{\mathop{#1}^{\leftarrow}\limits}

\def\semi{\mathrel{\times}\kern -6.5pt\joinrel\mathrel{\raise 1.4pt\hbox{${\scriptscriptstyle |}$}}\kern 2pt}
\title{
Aromatic and clumped multi-indices: algebraic structure and Hopf embeddings
}

\author{
Zhicheng Zhu\textsuperscript{1} and Adrien Busnot Laurent\textsuperscript{2}
}

\begin{document}

\footnotetext[1]{
School of Mathematics and Statistics, Lanzhou University Lanzhou, 730000, China. zhuzhch16@lzu.edu.cn.}
\footnotetext[2]{
Univ Rennes, INRIA (Research team MINGuS), IRMAR (CNRS UMR 6625) and ENS Rennes, France. Adrien.Busnot-Laurent@inria.fr.}

\maketitle

\begin{abstract}

Butcher forests extend naturally into aromatic and clumped forests and play a fundamental role in the numerical analysis of volume-preserving methods. The design of general volume-preserving methods is a challenging open problem, and recent attempts showed progress on specific dynamics.
We introduce aromatic and clumped multi-indices, that are algebraic objects that simplify the study of volume-preservation to the one-dimensional setting, while retaining much of the structure (in stark opposition to standard multi-indices). We provide their algebraic structure of pre-Lie-Rinehart algebra, Hopf algebroid, and Hopf algebra, apply them in numerical analysis, and generalise to the aromatic context the Hopf embedding from multi-indices to the BCK Hopf algebra.

\smallskip
\noindent
{\it Keywords:\,} multi-indices, aromatic trees, Novikov algebra, pre-Lie-Rinehart algebra, Hopf algebroid, Hopf algebra, geometric numerical integration, Butcher series.
\smallskip

\noindent
{\it AMS subject classification (2020):\,}  16T05, 37M15, 05C05, 16T30, 17A30.
 \end{abstract}

%\tableofcontents

\section{Introduction}
\label{section:Introduction}

While Butcher trees were introduced for the high order analysis of Runge-Kutta methods \cite{Butcher72aat, Hairer06gni}, aromatic trees are an extension that includes the use of specific graphs, in order to represent terms such as the divergence operator. They play a crucial role in the study of volume-preserving integrators \cite{Chartier07pfi, Iserles07bsm, Bogfjellmo22uat} and are also studied for their fundamental algebraic and geometric properties \cite{MuntheKaas16abs, McLachlan16bsm, Bogfjellmo19aso, Laurent23tab, Laurent23tld}, as well as for other applications, for instance in stochastic numerics \cite{Bronasco22cef}.
They were also recently extended for the study of volume-preserving methods on manifolds \cite{Busnot26tft} and yield important examples of post-Lie-Rinehart and post-Hopf algebroid structures \cite{Busnot25pha}.
In particular, the standard Butcher forests and the aromatic forests have Hopf algebra structures \cite{Connes98har,Chartier10aso,Bogfjellmo19aso,Bronasco22cef}, which play a role in the creation of numerical methods, but were also used, for instance in renormalisation of QFT, or for the study of singular SPDEs. We are especially interested in the Butcher-Connes-Kreimer Hopf algebra of forests $\HH_{BCK}$ and aromatic forests $\HH_{BCK}^{aro}$. As the algebraic structure proves challenging, the algebra of clumped forests $\HH_{BCK}^{cl}$ was later introduced in \cite{Bronasco22cef}.
It was observed in \cite{Bogfjellmo22uat} that for specific vector fields, several degeneracies occur with aromatic trees and simpler formalisms would help understand the full picture.
In numerics, multi-indices correspond to the case of dimension one, while Butcher trees correspond to the infinite dimensional case. Unfortunately, the problem of volume preservation becomes trivial when rewritten with mult-indices.
We present in this paper a new non-trivial algebraic object based on multi-indices allowing to represent Taylor expansions for the study of volume-preservation. This is a first step toward the characterisation of volume-preserving methods in any dimension.
We also mention that the question of exact volume-preservation for numerical integrators recently regained interest through its generalisation to the stochastic context. In this setting, it is shown in \cite{Laurent21ata, Bronasco22cef} that the design of methods for sampling exactly the invariant distribution of ergodic stochastic dynamics requires a deep understanding of so-called exotic aromatic B-series \cite{Laurent23tue}. The use of multi-indices structures is a first step toward the design of such discretisations, with numerous application in stochastic optimisation, molecular dynamics, and machine learning.

The Hopf algebra of multi-indices $\HH_{LOT}$ is introduced in the work \cite{Linares21tsg} for the study of rough PDEs with regularity structures.
The algebra was then further studied and extended in a variety of works including \cite{Linares22atf, Bruned23naa, Jacques23pla, Linares23ipl, Bruned24atd, Bruned25mib, Bruned25mic}.
From the numerical analysis viewpoint, multi-indices exactly correspond to the one-dimensional case: they identify the degeneracies of Butcher trees for ODEs in dimension one. Multi-indices thus serve as a toy model for tackling challenging numerical problems, before working on trees. We mention that there are no natural combinatorial structures between multi-indices and trees for representing ODEs in fixed dimension $d>1$, as pointed out in \cite{Bruned25edf}.
It is shown in \cite{Zhu24fna} that there exists a natural injective embedding $j$ that sends the Hopf algebra of multi-indices $\HH_{LOT}$ to the Hopf algebra of trees $\HH_{BCK}$.
The present paper introduces new spaces of multi-indices analogous to aromatic and clumped forests, mimicking the concepts of numerical analysis.
We present their Hopf algebra and pre-Lie-Rinehart algebra structures. Moreover, we show that there are Hopf embeddings $j^{aro}$ and $j^{cl}$ between the new multi-indices spaces and the aromatic and clumped forests, of which we give an overview in the following diagram.
\[
\begin{tikzcd}
\HH_{LOT}^{aro} \arrow[r, "j^{aro}",hook] \arrow[dd,"\varphi^*"' , bend right=49]& \HH_{BCK}^{aro} \arrow[dd,"\psi^*" , bend left=49]\\
\HH_{LOT} \arrow[r, "j",hook] \arrow[d, hook] \arrow[u, hook]& \HH_{BCK}\arrow[d, hook] \arrow[u, hook]\\
\HH_{LOT}^{cl} \arrow[r, "j^{cl}",hook]  & \HH_{BCK}^{cl}
\end{tikzcd}
\]

The paper is organised in the following way.
Section \ref{section:AF} presents a concise review of the algebraic structures based on aromatic trees in numerical analysis.
Section \ref{section:aromatic_multi-indices} presents the main results of this paper: we introduce the new aromatic and clumped multi-indices, provide their algebraic structure, show the Hopf embeddings between multi-indices and Butcher forests, and show how aromatic multi-indices apply in geometric numerical integration.
The proofs are presented in Section \ref{section:Hopf_structure} and rely on an aromatic extension of the Novikov algebra.

\section{Preliminaries on aromatic and clumped Butcher forests}
\label{section:AF}

This section is devoted to a concise overview of aromatic and clumped forests and their algebraic structures, as uncovered in \cite{Floystad20tup, Bogfjellmo19aso, Bronasco22cef}.
We shall introduce the aromatic extensions of multi-indices by mimicking the aromatic and clumped forests in Section \ref{section:aromatic_multi-indices}.

\subsection{The pre-Lie-Rinehart algebra of aromatic trees}

Let $C$ be a finite set, whose elements are called decorations (or colours in numerical analysis). We focus on trees with only one decoration for the examples for the sake of simplicity, but also as this is the case of major interest in numerics.
\begin{definition}
An aromatic tree is a directed graph $(V,E)$ with vertices $V$ and edges $E\subset V\times V$, where each vertex has exactly one outgoing edge, except one vertex called the root, that has none. The connected component with the root is called a tree, and the other connected components are called aromas. The empty aroma is denoted as $\mathbf{1}$. A multiset of aromas is called a multiaroma.
The vertices are decorated by elements of $C$, that is, we attach to each aromatic tree a map $d\colon V\rightarrow C$, often omitted in the notation for the sake of clarity.
We write $T$ the vector space of trees, $A_0$ the space of aromas, $\AA=S(A_0)$ the space of multiaromas, understood as the symmetric algebra over aromas, and $\AA T=\AA\otimes T$ the vector space of aromatic trees.
These spaces are naturally graded by the number of vertices, called the order of an aromatic tree $\pi$ and denoted $\abs{\pi}$.
\end{definition}

We draw aromatic trees as follows.
Aromas are drawn in an arbitrary order on the left of the tree. By definition, each aroma has exactly one cycle, also called $K$-loop in \cite{Iserles07bsm}, that is, a finite set of vertices $v_1,\dots,v_K$ such that there is an edge going from $v_1$ to $v_2$,\dots, $v_K$ to $v_1$. Thus, any aroma can be written as a tuple of trees $(t_1,\dots,t_n)$ with cyclic invariance.
The orientation of the edges goes from top to bottom and in clockwise order for cycles.
We find in particular
\begin{equation}
\label{ex:aroma_cyclic_inv}
\forestA=\forestB=\forestC.
\end{equation}
An aroma can also be described as a tuple of trees with cyclic invariance, obtained by removing the edges in the cycle of the aroma (see \cite{Busnot26tft}). For instance, the aroma in \eqref{ex:aroma_cyclic_inv} is equivalently represented by
\[
(\forestD,\forestE,\forestF)_\circlearrowleft=(\forestG,\forestH,\forestI)_\circlearrowleft=(\forestJ,\forestK,\forestL)_\circlearrowleft.
\]
The elements of order up to four of $\AA T$ (with one decoration only) are
\[
\forestM,\quad
\forestN,\quad \forestO,\quad
\forestP,\quad \forestQ,\quad \forestR,\quad \forestS,\quad \forestT,\quad \forestU,\quad
\forestV,\quad \forestW,\quad \forestX,\quad \forestY,\quad \forestAB,\]
\[
\forestBB,\quad \forestCB,\quad \forestDB,\quad \forestEB,\quad \forestFB,\quad \forestGB,\quad \forestHB,\quad \forestIB,\quad \forestJB,\quad \forestKB,\quad \forestLB.
\]
For clarity, we use the notation in bold $\mathbf{a}$ for describing multiaromas and $a$ for aromas.
Detailed combinatorics with aromatic trees are presented in \cite{Laurent23tab}.

We added aromas to the standard pre-Lie structure of Butcher trees and can now wonder which structure it yields on $\AA T$.
\begin{definition}
\label{def:div}
The grafting product $\curvearrowright\colon T\times \AA\rightarrow \AA$ and $\curvearrowright\colon T\times T\rightarrow T$ are defined by grafting the root of the first input to the nodes of the second input, summing over all vertices.
Then, the grafting product extends to aromatic trees $\curvearrowright\colon \AA T\times \AA T\rightarrow \AA T$ by
\[
(a_1\tau_1)\curvearrowright (a_2 \tau_2)=a_1 a_2 (\tau_1\curvearrowright \tau_2)+a_1  (\tau_1\curvearrowright a_2) \tau_2.\]
The divergence $d\colon \AA T\rightarrow \AA$ adds an edge going from the root to a node, summing over all vertices.
\end{definition}

For instance, we find
\[
\forestMB\curvearrowright\forestNB
=\forestOB+\forestPB+\forestQB+\forestRB,\quad
d \left(\forestSB\right)=\forestTB+\forestUB+2\forestVB.
\]

The aromatic trees naturally have a structure of pre-Lie-Rinehart algebra.
\begin{definition} \label{L-lieRinehart}
Let $\K$ be a field, $R$ be a unital commutative $\K$-algebra, and $L$ be a $R$-module, equipped with a $R$-linear map $\rho$, called the anchor map, and a map $\nabla$:
\[\rho: L \rightarrow Der_{\K}(R,R),\quad \nabla: L\rightarrow \End_{\kk}(L,L).\]
Then, $L$ is a pre-Lie-Rinehart algebra if, with the notation $X\rhd Y:=\nabla_{X}Y$ and $[X,Y]=X\rhd Y - Y\rhd X,$:
\begin{itemize}
\item $(L,[-,-])$ is a Lie algebra over $\K$,
\item The anchor map $\rho$ is a homomorphism of Lie algebras,
\item The Leibniz rule holds: for $f\in R$ and $X, Y\in L$, $[X, fY]=\big(\rho(X)fY\big) + f[X, Y]$,
\item $(L,\triangleright)$ is a pre-Lie algebra: for $X, Y, Z\in L$, one has
$$X \rhd (Y \rhd Z) - (X\rhd Y)\rhd Z = Y \rhd (X \rhd Z) - (Y \rhd X )\rhd Z.$$
\end{itemize}
\end{definition}

\begin{proposition}[\cite{Floystad20tup}]
The $\AA$-module $\AA T$ is a pre-Lie-Rinehart algebra when equipped with the maps
\[\rho(\tau)(\mathbf{a})=\tau\curvearrowright \mathbf{a},\quad \nabla_{\tau_1} \tau_2 =\tau_1\curvearrowright \tau_2.\]
\end{proposition}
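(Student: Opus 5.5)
We check the four axioms of Definition~\ref{L-lieRinehart} with $R=\AA$ and $L=\AA T$. The tool throughout is a combinatorial interpretation of grafting. For aromatic trees $a_i\tau_i$, the product $\tau_1\curvearrowright(a_2\tau_2)$ attaches the root of $\tau_1$ by a new edge to a vertex of $a_2\tau_2$ and sums over all such vertices. The multiaroma $a_1$ of the left factor is carried along as a passive coefficient.

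\textbf{Well-definedness and the anchor.} The formula in Definition~\ref{def:div} gives
\[
(a_1\tau_1)\curvearrowright(f\, a_2\tau_2)=f\,\big((a_1\tau_1)\curvearrowright(a_2\tau_2)\big)+a_1(\tau_1\curvearrowright f)\,a_2\tau_2 ,
\]
which is the Leibniz rule once we set $\rho(a_1\tau_1)(f)=a_1(\tau_1\curvearrowright f)$. Two facts need checking here.
\begin{itemize}
\item $\rho(X)$ is a derivation of $\AA$. Grafting onto a product of aromas sums over the vertices of each aroma separately, so the Leibniz rule on $\AA$ holds.
\item $\rho$ is $\AA$-linear. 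This is immediate, since the left multiaroma appears only as a coefficient.
\end{itemize}
The map $\nabla$ is $\AA$-linear in its first argument for the same reason. Note that $[X,Y]$ is not $\AA$-bilinear. The extra anchor terms that appear are exactly the content of the Leibniz axiom.

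\textbf{Pre-Lie identity.} This is the core step. By linearity it suffices to take $X=a_1\tau_1$, $Y=a_2\tau_2$, $Z=a_3\tau_3$. Then
\begin{itemize}
\item $X\rhd(Y\rhd Z)$ is the sum over all ways of grafting $\tau_2$ onto a vertex of $a_3\tau_3$ and then $\tau_1$ onto a vertex of the result, which lies in $a_2\tau_2$ or in $a_3\tau_3$;
\item $(X\rhd Y)\rhd Z$ grafts $\tau_1$ onto $a_2\tau_2$ and then grafts the whole of $\tau_2$ onto $a_3\tau_3$.
\end{itemize}
The subtlety is this: if $\tau_1$ is grafted onto an aroma of $a_2$, that aroma stays a coefficient and is not moved when $\tau_2$ is grafted. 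The term $(X\rhd Y)\rhd Z$ therefore cancels precisely those terms of $X\rhd(Y\rhd Z)$ in which $\tau_1$ lands on $\tau_2$ or on $a_2$. What remains is the sum over grafting $\tau_1$ and $\tau_2$ independently onto two vertices of $a_3\tau_3$. This is symmetric in $(X,Y)$, with the coefficients $a_1a_2$ symmetric as well. Hence the associator is symmetric in its first two arguments.

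\textbf{Lie algebra and anchor homomorphism.} A pre-Lie product always yields a Lie bracket by commutation, so $(L,[-,-])$ is a Lie algebra. For the homomorphism property, extend $\curvearrowright$ to act on $\AA\oplus\AA T$. The pre-Lie computation above, with $Z$ replaced by $f\in\AA$, shows that $\rho(X)\rho(Y)f-\rho(X\rhd Y)f$ is symmetric in $X$ and $Y$: it is the independent double grafting of $\tau_1$ and $\tau_2$ onto $f$. Antisymmetrising gives $[\rho(X),\rho(Y)]=\rho([X,Y])$.

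The main obstacle is the bookkeeping in the pre-Lie step. One must track carefully where grafted roots land (on the tree part or on the aromas) when aromas migrate between coefficient and argument. I would first verify the cancellation on generators using the explicit formula of Definition~\ref{def:div}, and only then sum.
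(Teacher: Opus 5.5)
Your proof is correct. The paper does not prove this statement itself; it quotes it from \cite{Floystad20tup}. Your direct combinatorial verification is therefore a self-contained substitute rather than a reconstruction of an argument in the text.

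Your Leibniz-rule check is the same computation the paper performs for the Novikov analogue in Proposition~\ref{prelierinehart}: grafting onto multiaromas is a derivation, and $\nabla$ is $\AA$-linear in its first slot. The real difference is the pre-Lie step. There it comes for free, because $\rhd$ is built from the derivation $\partial$; on $M_{-1}$ the associator is simply $P\,Q\,\partial^2 R$. For aromatic trees, your double-grafting cancellation is genuinely needed.

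Your two symmetric remainders express a single fact. These are the associator on $\AA T$ and the expression $\rho(X)\rho(Y)f-\rho(X\rhd Y)f$ on $\AA$. The fact is that $(T,\curvearrowright)$ is pre-Lie, and $\AA$ is a left pre-Lie module over it on which trees act by derivations. Under exactly these hypotheses, for any commutative algebra $R$ and pre-Lie algebra $\mathcal L$ acting on it this way, $R\otimes\mathcal L$ is pre-Lie-Rinehart with $(fx)\rhd(gy)=f(x\rhd g)\,y+fg\,(x\rhd y)$. Stating it this way packages your bookkeeping once and explains why the aroma terms cancel.

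What the citation buys beyond your argument is the universality of $(\AA T,\curvearrowright,t)$ as the free tracial pre-Lie-Rinehart algebra. This statement does not need that.

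One minor precision: the surviving sum in your pre-Lie step runs over ordered pairs $(v,w)\in V(a_3\tau_3)^2$, including $v=w$. That is how ``two vertices'' should be read; the symmetry holds either way.
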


Given a vertex $v$ of an aromatic tree $\tau\in \AA T$, let $\delta_v\in \End (\AA T)$ be the map that grafts the root of an aromatic tree on $v$. For any choice of a vertex $v$ in a rooted tree $t$, we represent $\delta_v(t)$ by attaching a free edge (indicated by a leaf $\forestWB$) to $v$. The space of aromatic trees with a free edge is denoted $\AA T^\times$.
Then, the trace $t\colon \AA T^\times\rightarrow \AA$ links the root to the free edge.
In particular, we find
\[
t(\forestXB)=\forestYB.
\]
The divergence then satisfies $d=t\circ \delta$, where $\delta=\sum_v\delta_v$.
Equipped with the trace, aromatic trees satisfy a universal algebraic property: the aromatic trees $(\AA T,\curvearrowright,t)$ over the $\R$-algebra $\AA$ are the free tracial pre-Lie-Rinehart algebra (see \cite{Floystad20tup} for details).
We mention that aromatic structures have been extended to the post-Lie context in \cite{Busnot26tft, Rahm26tup} and yield the free tracial post-Lie-Rinehart algebra.

\subsection{Hopf structure of aromatic and clumped forests}
\label{clumphopf}

In numerical analysis, Butcher trees represent vector fields and symmetric concatenation of trees, called forests, represent differential operators. The different Hopf algebra structures on Butcher forests yield important numerical results. For the analysis of splitting methods, the simpler Hopf algebras of words is used \cite{Blanes24smf}, while for Runge-Kutta methods, Hopf algebras of forests are the natural approach \cite{Chartier10aso, McLachlan16bsm}.
There are two main possibilities to build forests from aromatic trees: aromatic forests are the ones that appear in the Taylor expansions of numerical integrators, and clumped forests have a simpler algebraic structure that is useful for the formulation of backward error analysis. We refer to \cite{Bogfjellmo19aso, Bronasco22cef} for the details.
\begin{definition}
The $\AA$-module $\AA\FF=\AA\otimes S(T)$ is the space of aromatic forests and $\CC\FF=S(\AA T)$ is the (symmetric) algebra of clumped forests. They are graded by the number of nodes. The empty multiaroma and the empty forest are written with the same notation $\mathbf{1}$.
\end{definition}
We use the same notation $\cdot$ for all products between aromas and trees and omit it when the context is clear. We emphasize that this product is commutative.
We will use parenthesis to represent clumped forests:
\[
\forestAC\in \AA\FF, \quad
(\forestBC)\ \forestCC\neq (\forestDC)\ \forestEC\ (\forestFC)\in \CC\FF.
\]

Let the symmetry coefficient of an aromatic forest $\pi\in \AA\FF$ be the size of the automorphism group of the associated graph, where we recall that the automorphism group of a directed graph $(V,E)$ is the set of permutations $g\colon V\rightarrow V$ satisfying $(g\times g)(E)=E$. The symmetry coefficient on $\CC\FF$ is induced from the definition on $\AA T$ by
\[\sigma((\mathbf{a}^1\tau^1)^{r_1}\cdots(\mathbf{a}^n\tau^n)^{r_n})=\sigma^{ext}((\mathbf{a}^1\tau^1)^{r_1}\cdots(\mathbf{a}^n\tau^n)^{r_n})\sigma^{int}((\mathbf{a}^1\tau^1)^{r_1}\cdots(\mathbf{a}^n\tau^n)^{r_n}),\]
with the external and internal symmetry factors:
\begin{align*}
    \sigma^{ext}((\mathbf{a}^1\tau^1)^{r_1}\cdots(\mathbf{a}^n\tau^n)^{r_n})&=r_1!\cdots r_n!,\\
\sigma^{int}((\mathbf{a}^1\tau^1)^{r_1}\cdots(\mathbf{a}^n\tau^n)^{r_n})&=\sigma(\mathbf{a}^1\tau^1)^{r_1} \cdots \sigma(\mathbf{a}^n\tau^n)^{r_n}.
\end{align*}
We find
\[
\sigma(\forestGC)=1, \quad \sigma(\forestHC)=4, \quad \sigma(\forestIC)=2.
\]
We observe that $\sigma(\mathbf{a}\tau)=\sigma(\mathbf{a})\sigma(\tau)$ for $\mathbf{a}\in \AA$, $\tau\in T$, but we emphasize that $\sigma(\mathbf{a})$ is not the product of the symmetry coefficients of each aroma in $\mathbf{a}$.
Let the projection $\psi\colon \CC\FF\rightarrow \AA\FF$ be given by
\[
\psi\colon (\mathbf{a}^1\tau^1)\dots (\mathbf{a}^m\tau^m)\in \CC\FF \mapsto \mathbf{a}^1\dots \mathbf{a}^m\tau^1\dots \tau^m\in \AA\FF.
\]
Note that $\AA\FF$ cannot be injected straightforwardly to $\CC\FF$ as the multiaromas alone $\AA$ do not appear in $\CC\FF$.
Let the inner product $\langle \pi_1,\pi_2 \rangle=\sigma(\pi_1)\ind_{\pi_1=\pi_2}$ and the associated dual map of $\psi$:
\[
\psi^*\colon \AA\FF\rightarrow \CC\FF, \quad \langle \psi(\pi_1),\pi_2 \rangle=\langle \pi_1,\psi^*(\pi_2) \rangle.
\]
For instance, we find
\[
\psi^*(\forestJC)=(\forestKC)\ \forestLC+\forestMC\ (\forestNC).
\]

The pre-Lie grafting product $\curvearrowright$ extends to $\AA\FF$ and $\CC\FF$ by the standard Guin-Oudom procedure \cite{Oudom08otl}.
The Grossman-Larson product $\star$ is implicitly defined on both clumped and aromatic forests by
\[
(\pi_1\star\pi_2)\curvearrowright \pi_3 =\pi_1\curvearrowright (\pi_2\curvearrowright \pi_3).
\]
Then, $\AA\FF$ is equipped with the coproduct $\Delta a\pi=\Delta a\cdot \Delta \pi$, where we use the standard deshuffle coproduct on the symmetric algebras $\AA=S(A^0)$ and $S(T)$.
We find for instance:
\[
\Delta \forestOC=
(\forestPC\otimes\mathbf{1}
+\forestQC\otimes \forestRC
+\forestSC\otimes \forestTC
+\mathbf{1}\otimes \forestUC)\cdot
(\forestVC\otimes\mathbf{1}
+\forestWC\otimes \forestXC
+\forestYC\otimes \forestAD
+\mathbf{1}\otimes \forestBD).\]
On clumped forests, we consider the standard coproduct $\Delta$ on the symmetric algebras $\CC\FF$:
\[
\Delta (\forestCD)\cdot(\forestDD)=
(\forestED)\cdot(\forestFD)\otimes\mathbf{1}
+\forestGD\otimes \forestHD
+\forestID\otimes \forestJD
+\mathbf{1}\otimes (\forestKD)\cdot(\forestLD).
\]
We naturally obtain Hopf algebra structures, but also Hopf algebroid structures for the aromatic forests (that are out of the scope of the present paper, see \cite{Lu96haa, Moerdijk10otu}).
\begin{proposition}
The spaces $(\AA\FF,\cdot,\Delta)$, $(\AA\FF,\star,\Delta)$, $(\CC\FF,\cdot,\Delta)$, and $(\CC\FF,\star,\Delta)$ are Hopf algebras.
\end{proposition}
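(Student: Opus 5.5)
The four structures split naturally into two pairs. The pairs $(\AA\FF,\cdot,\Delta)$ and $(\CC\FF,\cdot,\Delta)$ are the easy ones, and I would treat them first. In each case the underlying space is a symmetric algebra: $\CC\FF=S(\AA T)$, and $\AA\FF=\AA\otimes S(T)=S(A_0\oplus T)$. The coproduct $\Delta$ is the standard deshuffle coproduct, for which every generator (an aromatic tree, respectively an aroma or a tree) is primitive. The symmetric algebra $S(V)$ of any vector space with the deshuffle coproduct is a graded connected commutative and cocommutative bialgebra, so it is a Hopf algebra. The antipode is given on generators by $S(x)=-x$, extended multiplicatively. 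The only check needed for $\AA\FF$ is that $\Delta(\mathbf{a}\pi)=\Delta\mathbf{a}\cdot\Delta\pi$ agrees with the deshuffle coproduct of $S(A_0\oplus T)$, and this holds by construction. Connectedness comes from the grading by number of nodes, since the degree-zero part is spanned by $\mathbf{1}$.

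For the Grossman--Larson structures I would rely on the Guin--Oudom theorem \cite{Oudom08otl}. Its statement is that for a pre-Lie algebra $(L,\curvearrowright)$, the extended product on $S(L)$ yields an associative product $\star$, implicitly defined by $(\pi_1\star\pi_2)\curvearrowright\pi_3=\pi_1\curvearrowright(\pi_2\curvearrowright\pi_3)$. With this product, $(S(L),\star,\Delta)$ is a Hopf algebra isomorphic to $U(L_{\mathrm{Lie}})$. For $\CC\FF=S(\AA T)$ it then suffices to know that $(\AA T,\curvearrowright)$ is pre-Lie over $\K$, which is part of the pre-Lie--Rinehart statement of \cite{Floystad20tup} recalled above. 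This gives $(\CC\FF,\star,\Delta)$ directly. It is still graded and connected, because grafting preserves the total number of vertices, so the antipode exists automatically.

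The main obstacle is $(\AA\FF,\star,\Delta)$. The space $\AA\FF$ is not $S(L)$ for a pre-Lie algebra $L$ over $\K$: aromas alone carry no vector-field structure, and grafting onto them goes through the anchor $\rho$. My plan is to view $\AA\FF$ as the Guin--Oudom extension of the pre-Lie--Rinehart algebra $\AA T$ over $\AA$, that is, the $\AA$-linear symmetric algebra $\AA\otimes S(T)$. The Grossman--Larson product would be the smash-type product
\[
(\mathbf{a}_1\pi_1)\star(\mathbf{a}_2\pi_2)=\sum \mathbf{a}_1\,(\pi_1^{(1)}\curvearrowright\mathbf{a}_2)\,(\pi_1^{(2)}\star\pi_2),
\]
where $\pi_1^{(1)}\curvearrowright\mathbf{a}_2$ is the action of $S(T)$ on $\AA$ through the anchor. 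Along these lines I would establish four facts:
\begin{itemize}
\item associativity, by checking the implicit defining relation on $\AA$ and on $\AA T$ and using faithfulness of the action;
\item $\Delta$ is a morphism for $\star$, which follows since $\Delta$ is an algebra map for $\cdot$, primitive elements act as derivations, and $\rho$ acts by derivations of $\AA$;
\item the counit is compatible;
\item connectedness for the node grading.
\end{itemize}
The existence of the antipode then follows from the graded connected bialgebra argument, by Takeuchi's formula. An alternative for the compatibility check is to transport the structure from $(\CC\FF,\star,\Delta)$ via $\psi$. This works because $\psi$ is surjective, and I would verify that $\psi$ respects both $\star$ and $\Delta$. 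The $\Delta$ part is immediate from the definitions. The $\star$ part reduces to the fact that the pre-Lie actions on aromas and on trees are combined as in Definition~\ref{def:div}. Showing this compatibility of $\psi$ with $\star$ is the step I expect to require the most care.
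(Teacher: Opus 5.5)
Your treatment of $(\AA\FF,\cdot,\Delta)$, $(\CC\FF,\cdot,\Delta)$ and $(\CC\FF,\star,\Delta)$ is correct and matches the reasoning the paper relies on. The paper gives no proof of this proposition. It treats it as a recollection: symmetric algebras with the deshuffle coproduct, plus the Guin--Oudom construction, with references to \cite{Oudom08otl, Bogfjellmo19aso, Bronasco22cef}. For $(\AA\FF,\star,\Delta)$, however, the obstacle you describe does not exist, and your fallback argument is wrong.

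\emph{Guin--Oudom applies directly to $\AA\FF$.} Your claim that $\AA\FF$ is not $S(L)$ for a pre-Lie algebra over $\K$ is false. You already wrote $\AA\FF=S(A_0\oplus T)$. On $L=A_0\oplus T$ put the product $t_1\rhd t_2=t_1\curvearrowright t_2$, $t\rhd a=t\curvearrowright a$, $a\rhd x=0$ (an aroma has no root to graft).
\begin{itemize}
\item Since $T\curvearrowright A_0\subset A_0$, every associator with an aroma in one of its first two slots vanishes.
\item The only new identity is that $t_1\curvearrowright(t_2\curvearrowright a)-(t_1\curvearrowright t_2)\curvearrowright a$ is symmetric in $t_1,t_2$. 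This holds because it is the sum over all ways of grafting $t_1$ and $t_2$ separately on (not necessarily distinct) vertices of $a$.
\end{itemize}
Guin--Oudom then applies verbatim. Any element of $S(L)$ containing an aroma acts by zero, so the Guin--Oudom product is exactly your smash formula. It follows that $(\AA\FF,\star,\Delta)\cong U(A_0\rtimes T)$ as Hopf algebras, with $A_0$ abelian. Associativity, compatibility and the antipode come for free. This is how the paper's remark that $\curvearrowright$ extends to $\AA\FF$ ``by the standard Guin--Oudom procedure'' should be read.

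\emph{Your smash-product route also works, with one fact added.} In $\AA\#(S(T),\star,\Delta)$, the reason $\Delta$ is multiplicative for $\star$ is not just that trees act on $\AA$ by derivations. Grafting a tree on an aroma gives a linear combination of single aromas, so each tree acts on $S(A_0)$ by a derivation that is also a coderivation for the deshuffle coproduct. This makes $\AA$ a module bialgebra over a cocommutative Hopf algebra. It is exactly what $\Delta(t\star a)=\Delta t\star\Delta a$ needs, since that identity reduces to $t\curvearrowright a$ being primitive.

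\emph{The transport along $\psi$ should be dropped; both of its premises are false.}
\begin{itemize}
\item $\psi$ is not surjective. Every nonempty clumped forest contains a tree, so a single aroma $a\in\AA\FF$ is not in the image. This is the paper's remark that multiaromas alone do not appear in $\CC\FF$.
\item $\psi$ does not intertwine the coproducts. A clump $a\tau$ is a generator of $S(\AA T)$, hence primitive in $\CC\FF$. In $\AA\FF$, by contrast, $\Delta(a\tau)=a\tau\otimes\mathbf 1+a\otimes\tau+\tau\otimes a+\mathbf 1\otimes a\tau$.
\end{itemize}
So the ``$\Delta$ part'' you call immediate is in fact false, and this route cannot give the Hopf structure on $\AA\FF$.
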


Let us extend the celebrated Butcher-Connes-Kreimer coproduct \cite{Connes98har, Kreimer99lfq} to $\AA\FF$ and $\CC\FF$.
\begin{definition}
An admissible cut $c$ of a tree $\tau\in T$ is a possibly empty subset of the edges of $\tau$ where any path from the root to a leaf of $x$ has at most one cut.
An admissible cut of an aroma $a=(\tau_1,\dots,\tau_K)\in A^0$, written as a tuple with cyclic invariance, is the union of admissible cuts of the $\tau_k$. In other words, an admissible cut of an aroma cannot cut the edges in the cycle.
If $c$ is admissible, the connected components obtained by removing the edges in $c$ are collected in $R^c(x)$ that contains the root or the cycle, and $P^c(x)$ for the other components.
\end{definition}

\begin{definition}
The Butcher-Connes-Kreimer coproduct on $A^0$ and $T$ is defined by the extraction contraction formula
\[
\Delta_{BCK}^{aro}x=x\otimes \mathbf{1}+\sum_{c\in Adm(x)} P^c(x)\otimes R^c(x),
\]
The coproduct is extended into $\Delta_{BCK}^{aro}\colon \AA\FF\rightarrow \AA\FF\otimes \AA\FF$ as a morphism:
\[
\Delta_{BCK}^{aro}a^1\cdots a^m\tau^1\cdots \tau^n=\Delta_{BCK}^{aro}a^1\cdots \Delta_{BCK}^{aro}a^m\cdot \Delta_{BCK}^{aro}\tau^1 \cdots \Delta_{BCK}^{aro}\tau^n.
\]
On clumped forests, the clumped BCK coproduct is defined for $\mathbf{a}\tau,\mathbf{a}_1\tau_1,\dots,\mathbf{a}_n\tau_n\in \AA T$ by
\[
\Delta_{BCK}^{cl}\tau=(\psi^*\otimes \ind)\circ \Delta_{BCK}^{aro}(\tau),\quad
\Delta_{BCK}^{cl}(\mathbf{a}_1\tau_1)\cdots (\mathbf{a}_n\tau_n)=\Delta_{BCK}^{cl}\mathbf{a}_1\tau_1\cdots \Delta_{BCK}^{cl}\mathbf{a}_n\tau_n.
\]
\end{definition}

\newcommand{\forestaroBCKexb}{
\tikz[planar forest ] {
\draw (-1.0,1.0) edge[-] (0.0,1.0);
\draw (0.0,1.0) edge[-] (1.0,1.0);
\draw (-1.0,1.0) arc (90:270:0.5);
\draw (1.0,1.0) arc (90:-90:0.5);
\draw (-1.0,0.0) edge[-] (1.0,0.0);
\node [draw=none] at (0.0, 0.0) { } 
child {node [b] at (-1.0, 1.0) {  } edge from parent[draw=none] 
child {node [b] at (0.0, 1.0) {  }  
child {node [b] at (0.0, 1.0) {  }  
}
}
}
child {node [b] at (0.0, 1.0) {  } edge from parent[draw=none] 
}
child {node [b] at (1.0, 1.0) {  } edge from parent[draw=none] 
child {node [b] at (0.0, 1.0) {  }  
}
}
;
\draw (-1.5, 1.5) -- (-0.5, 1.5);
}}

\newcommand{\forestaroBCKexc}{
\tikz[planar forest ] {
\draw (-1.0,1.0) edge[-] (0.0,1.0);
\draw (0.0,1.0) edge[-] (1.0,1.0);
\draw (-1.0,1.0) arc (90:270:0.5);
\draw (1.0,1.0) arc (90:-90:0.5);
\draw (-1.0,0.0) edge[-] (1.0,0.0);
\node [draw=none] at (0.0, 0.0) { } 
child {node [b] at (-1.0, 1.0) {  } edge from parent[draw=none] 
child {node [b] at (0.0, 1.0) {  }  
child {node [b] at (0.0, 1.0) {  }  
}
}
}
child {node [b] at (0.0, 1.0) {  } edge from parent[draw=none] 
}
child {node [b] at (1.0, 1.0) {  } edge from parent[draw=none] 
child {node [b] at (0.0, 1.0) {  }  
}
}
;
\draw (-1.5, 2.5) -- (-0.5, 2.5);
}}

\newcommand{\forestaroBCKexd}{
\tikz[planar forest ] {
\draw (-1.0,1.0) edge[-] (0.0,1.0);
\draw (0.0,1.0) edge[-] (1.0,1.0);
\draw (-1.0,1.0) arc (90:270:0.5);
\draw (1.0,1.0) arc (90:-90:0.5);
\draw (-1.0,0.0) edge[-] (1.0,0.0);
\node [draw=none] at (0.0, 0.0) { } 
child {node [b] at (-1.0, 1.0) {  } edge from parent[draw=none] 
child {node [b] at (0.0, 1.0) {  }  
child {node [b] at (0.0, 1.0) {  }  
}
}
}
child {node [b] at (0.0, 1.0) {  } edge from parent[draw=none] 
}
child {node [b] at (1.0, 1.0) {  } edge from parent[draw=none] 
child {node [b] at (0.0, 1.0) {  }  
}
}
;
\draw (0.5, 1.5) -- (1.5, 1.5);
}}

\newcommand{\forestaroBCKexe}{
\tikz[planar forest ] {
\draw (-1.0,1.0) edge[-] (0.0,1.0);
\draw (0.0,1.0) edge[-] (1.0,1.0);
\draw (-1.0,1.0) arc (90:270:0.5);
\draw (1.0,1.0) arc (90:-90:0.5);
\draw (-1.0,0.0) edge[-] (1.0,0.0);
\node [draw=none] at (0.0, 0.0) { } 
child {node [b] at (-1.0, 1.0) {  } edge from parent[draw=none] 
child {node [b] at (0.0, 1.0) {  }  
child {node [b] at (0.0, 1.0) {  }  
}
}
}
child {node [b] at (0.0, 1.0) {  } edge from parent[draw=none] 
}
child {node [b] at (1.0, 1.0) {  } edge from parent[draw=none] 
child {node [b] at (0.0, 1.0) {  }  
}
}
;
\draw (-1.5, 1.5) -- (-0.5, 1.5);
\draw (0.5, 1.5) -- (1.5, 1.5);
}}

\newcommand{\forestaroBCKexf}{
\tikz[planar forest ] {
\draw (-1.0,1.0) edge[-] (0.0,1.0);
\draw (0.0,1.0) edge[-] (1.0,1.0);
\draw (-1.0,1.0) arc (90:270:0.5);
\draw (1.0,1.0) arc (90:-90:0.5);
\draw (-1.0,0.0) edge[-] (1.0,0.0);
\node [draw=none] at (0.0, 0.0) { } 
child {node [b] at (-1.0, 1.0) {  } edge from parent[draw=none] 
child {node [b] at (0.0, 1.0) {  }  
child {node [b] at (0.0, 1.0) {  }  
}
}
}
child {node [b] at (0.0, 1.0) {  } edge from parent[draw=none] 
}
child {node [b] at (1.0, 1.0) {  } edge from parent[draw=none] 
child {node [b] at (0.0, 1.0) {  }  
}
}
;
\draw (-1.5, 2.5) -- (-0.5, 2.5);
\draw (0.5, 1.5) -- (1.5, 1.5);
}}

\begin{ex}
\label{ex:ex_aroBCK}
Consider the aromatic tree $\tau=\forestMD$, then the associated BCK coproduct is
\[
\Delta_{BCK}^{aro} \forestND=
((\forestOD\otimes \mathbf{1}+ \mathbf{1}\otimes \forestPD)
\cdot (\forestQD\otimes \mathbf{1}+ \forestRD\otimes \forestSD+ \mathbf{1}\otimes \forestTD))
\cdot (\forestUD\otimes \mathbf{1}+ \mathbf{1}\otimes \forestVD).
\]
From the clumped forest perspective, we remove the terms with aromas alone:
\[
\Delta_{BCK}^{cl} \forestWD=
\forestXD\otimes \mathbf{1}
+\forestYD\otimes \forestAE
+\forestBE\otimes \forestCE
+\mathbf{1}\otimes \forestDE.
\]
A more involved example is detailed in Table \ref{table:ex_aro_BCK} and further examples are presented in the appendices of \cite{Bogfjellmo19aso, Bronasco22cef}.

\begin{figure}[ht]
\begin{longtable}{|C|C|C|C|C|C|C|}
\hline
\text{Cut } c & \forestEE & \forestaroBCKexb & \forestaroBCKexc & \forestaroBCKexd & \forestaroBCKexe & \forestaroBCKexf \\\hline
P_c(\tau) & \mathbf{1} & \forestFE & \forestGE & \forestHE & \forestIE & \forestJE \\\hline
R_c(\tau) & \forestKE & \forestLE & \forestME & \forestNE & \forestOE & \forestPE \\\hline
\caption{Admissible cuts and related maps for an aroma.}
\label{table:ex_aro_BCK}
\end{longtable}
\end{figure}
\end{ex}

\begin{theorem}[\cite{Bogfjellmo19aso, Bronasco22cef}]
The space $(\AA\FF,\cdot,\Delta_{BCK}^{aro})$ is a Hopf algebra and is isomorphic (up to the symmetry coefficient) to the dual of the Grossman-Larson Hopf algebra $(\AA\FF,\star,\Delta)$, with $\Delta$ the standard deshuffle coproduct.
Analogously, the space $(\CC\FF,\cdot,\Delta_{BCK}^{cl})$ is a Hopf algebra and is isomorphic (up to the symmetry coefficient) to the dual of the Grossman-Larson Hopf algebra $(\CC\FF,\star,\Delta)$.
\end{theorem}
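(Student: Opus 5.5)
The cleanest route goes through duality. I will show that, under the pairing $\langle \pi_1,\pi_2\rangle=\sigma(\pi_1)\ind_{\pi_1=\pi_2}$, the BCK coproduct is the transpose of the Grossman-Larson product and the concatenation product is the transpose of the deshuffle coproduct. The Hopf axioms for $(\AA\FF,\cdot,\Delta_{BCK}^{aro})$ then follow from those of $(\AA\FF,\star,\Delta)$, which the previous proposition gives. The setting is graded, connected and of finite type, so graded duals are Hopf algebras and the antipode is automatic. The same scheme applies verbatim to clumped forests.

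\textbf{Step 1: the cheap half.} On $\AA\FF$, the pairing makes the commutative product $\cdot$ dual to the deshuffle coproduct $\Delta$, up to the symmetry factors. This is the standard computation for symmetric algebras: $\sigma$ of a product with repetitions $r_i$ carries the factor $r_1!\cdots r_n!$, which matches the multiplicity of the corresponding deshuffles. The aromatic subtlety is that $\sigma(\mathbf{a})$ is not multiplicative over aromas. However, $\sigma(\mathbf{a})$ is still the automorphism count of a disjoint union of connected graphs, so it factors as a product of automorphism counts times permutations of identical components, exactly as for forests. Hence this step goes through unchanged.

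\textbf{Step 2: the core identity.} I need to prove
\[
\langle \pi_1\star\pi_2,\pi_3\rangle=\langle \pi_1\otimes\pi_2,\Delta_{BCK}^{aro}\pi_3\rangle .
\]
I will use the Guin-Oudom description of $\star$: the product $\pi_1\star\pi_2$ is the sum over all ways of grafting the roots of the trees of $\pi_1$ onto vertices of $\pi_2$, including vertices in aromas, with the aromas of $\pi_1$ simply multiplied in. Untwisting the symmetry factors, the coefficient of $\pi_3$ in $\pi_1\star\pi_2$ becomes a count of labelled graftings. Dually, the coefficient of $\pi_1\otimes\pi_2$ in $\Delta_{BCK}^{aro}\pi_3$ counts admissible cuts $c$ with $P^c(\pi_3)\cong\pi_1$ and $R^c(\pi_3)\cong\pi_2$.

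The bijection sends each grafting to the set of grafted edges. This set is admissible: grafted edges all start at roots of the $\pi_1$ trees, so no root-to-leaf path meets two of them. It never contains a cycle edge, because grafting never creates cycles; this is exactly why admissible cuts of aromas exclude cycle edges. Aromas of $\pi_1$ are never cut and correspond to aromas of $\pi_3$ sent entirely to the left factor. This is the term $a\otimes\mathbf{1}$ in $\Delta_{BCK}^{aro}a$; it also accounts for the extra term in Definition~\ref{def:div} where a tree grafts into $\mathbf{a}_2$. Counting orbits under $\mathrm{Aut}(\pi_3)$ by the orbit-stabiliser theorem then converts labelled counts into the factors $\sigma(\pi_1)\sigma(\pi_2)/\sigma(\pi_3)$, which gives the stated isomorphism up to the symmetry coefficient.

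With Steps 1 and 2, transposing the Hopf axioms of $(\AA\FF,\star,\Delta)$ yields coassociativity of $\Delta_{BCK}^{aro}$ and its multiplicativity. Multiplicativity is in any case built into the definition.

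\textbf{Step 3: clumped forests.} Here $\star$ is defined through grafting on $\CC\FF=S(\AA T)$, where aromas stay attached to their clump. The right-hand factor $R^c$ of a single aromatic tree is again one clump. The left factor $P^c$, however, is an aromatic forest whose cut-off aromas must be redistributed among clumps. This redistribution is exactly what $\psi^*$ does, being the transpose of the unclumping map $\psi$. So the identity for a single generator $\mathbf{a}\tau$ reads
\[
\langle\pi_1\star\mathbf{b}\tau',\mathbf{a}\tau\rangle=\langle\psi(\pi_1)\otimes\mathbf{b}\tau',\Delta_{BCK}^{aro}\mathbf{a}\tau\rangle ,
\]
and this follows from Step 2 together with the definition of $\psi^*$. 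For products of clumps, both sides are multiplicative.

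\textbf{Main obstacle.} I expect the hard part to be the symmetry-factor bookkeeping in Step 2 for aromas. Cyclic symmetries of an aroma, as in \eqref{ex:aroma_cyclic_inv}, interact with cuts and with the non-multiplicativity of $\sigma(\mathbf{a})$. I would handle this by working throughout with vertex-labelled graphs and only quotienting by automorphisms at the very end, which avoids any case analysis on cycle lengths.
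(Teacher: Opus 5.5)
Your proposal is correct. The paper does not prove this theorem itself; it quotes it from the cited works. Your duality argument (grafting/cut bijection with orbit--stabiliser counting, transport of the Hopf axioms from $(\AA\FF,\star,\Delta)$, and reduction of the clumped case through $\psi^*$) is the standard route for this result, and it mirrors what the paper does for multi-indices in Proposition~\ref{propgraftcut} and Lemma~\ref{aroj}.

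Two small points to tidy. First, your description of $\star$ should also allow trees of $\pi_1$ to stay unattached, which is dual to the terms $t\otimes\mathbf{1}$. Second, in Step 3, $(\psi^*\otimes\ind)\circ\Delta_{BCK}^{aro}(\mathbf{a}\tau)$ contains terms in which $\tau$ goes entirely left and leaves a bare multiaroma on the right; these must be discarded, as the paper does in its example. They pair to zero against $\mathbf{b}\tau'$, so your identity is unaffected.
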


\begin{remark}
More generally, the aromatic/clumped forests can be equipped with a structure of pre-Hopf algebroid/algebra \cite{Bronasco22cef}. We mention that planar extensions of these objects exist and yield post-Hopf algebroid/algebra \cite{Li23pha, Busnot25pha, Busnot26tft}.
\end{remark}

\begin{remark}
The BCK structure allows to formulate the composition of differential operators given by a Taylor expansion (more precisely, by S-series). The product dual to the BCK coproduct is called the composition law and appears in the order analysis of numerical integrators of Runge-Kutta type.
This approach generalises to aromatic B-series methods \cite{MuntheKaas16abs, Bogfjellmo19aso, Laurent23tab} with the Hopf structures of aromatic and clumped forests.
\end{remark}

\section{Aromatic and clumped multi-indices}
\label{section:aromatic_multi-indices}

In this section, we introduce the new objects of aromatic and clumped multi-indices, generalising \cite{Lejay22cgr, Linares21tsg}, and we present the main results of this paper. More precisely, we uncover the pre-Lie-Rinehart and Hopf algebraic structures of the new multi-indices and derive Hopf embeddings from mutiindices to their tree counterparts, generalising \cite{Zhu24fna}.

\subsection{The free aromatic Novikov algebra}
\label{subsec:free_aromatic_Novikov_algebra}

A Novikov algebra is a vector space $N$ over a base field $\K$,  together with a bilinear product $\rhd: N\times N\rightarrow N$ such that, for any $x,y,z\in N$, the following identities hold:
\begin{align}
x\rhd (y\rhd z) - (x\rhd y)\rhd z & =y\rhd (x\rhd z) - (y \rhd x)\rhd z, \label{Novikov}\\
(x\rhd y)\rhd z & =(x\rhd z)\rhd y.\label{NAP}
\end{align}
The equation \eqref{Novikov} is the pre-Lie identity, while the equation \eqref{NAP}  is the right-NAP identity (for non associative permutative). We denote the free Novikov algebra generated by the set $C$ by $N(C)$.
This is described as follows: let $\overline{N}(C)$ be the commutative algebra of polynomials with variables $x_{j}^a, (a,j)\in C\times \{-1, 0, 1, 2, \cdots\}$, let $\partial$ be the unique derivation of $\overline{N}(C)$ such that $\partial x_j^a= x_{j+1}^a$.

A basis of $\overline{N}(C)$ is given by the monomials 
\[x^{\mathbf k}\colon= \prod_{j\geq -1, a\in C}(x_j^a)^{k_j^a},\]
and the weight of $x^{\mathbf k}$ is defined as
\[\mathbf {wt}(x^{\mathbf k})=\sum_{a\in C, j\leq -1 } j k_{j}^{a}\]
where the exponents $k_j^a$ are non-negative integers, and all terms of the product vanish except a finite number of them. The weight induces a unique $\Z$-grading of the algebra $\overline{N}(C)$, for which the derivation $\partial$ is homogeneous of degree one. The derivation $\partial$ satisfies
\begin{equation}\label{partialderivation}
    \partial x^{\mathbf k}=\sum_{j\geq -1, a\in C}k_j^ax^{\mathbf k - \mathbf e_j^a + \mathbf e_{j+1}^a}
\end{equation}
where $\mathbf e_j^a$ is the multi-index whose entries are all zero except for a single non-zero entry in position $(j,a)$. In other words, $x^{\mathbf e_j^a}=x_j^a$. Let $M_n$ be the monoid of weight $n$ and the bilinear product $P\rhd Q\colon=P\cdot\partial Q$. Then $(M_{-1},\rhd)$ is a pre-Lie algebra.

In the spirit of aromatic trees, we construct the aromatic Novikov algebra by $\AA M_{-1}:= S(M_0)\otimes M_{-1}$, where $S(V)$ denotes the symmetric tensorial algebra. Aromas are in $M_0$, multiaromas in $S(M_0)$, and $M_{-1}$ represents the analogue of trees.
The multi-indices analogues of aromatic and clumped forests are $\AA\MM=S(M_0)\otimes S(M_{-1})$ and $\CC\MM=S(\AA M_{-1})$.
For the sake of simplicity, we use the same notation $\odot$ for the products in $S(M_0)$, in $S(M_{-1})$, and in $\AA\MM$. Moreover, the unit of all symmetric tensorial algebras is denoted as $\mathbf{1}$.
The product of the symmetric algebra $\CC\MM=S(\AA M_{-1})$ is denoted by $\diamond$.
Aromatic monomials $\AA\MM$ are spanned by elements of the form
$$x^{\kappa^1}\odot \cdots \odot x^{\kappa^m} \odot x^{\mathbf k^1}\odot \cdots \odot x^{\mathbf k^n},\quad x^{\kappa^i}\in M_0,\quad x^{\mathbf k^i}\in M_{-1}.$$
On the other hand, clumped monomials are spanned by elements of the form
$$(\mathbf y^1\odot x^{\mathbf k^1})\diamond \cdots \diamond (\mathbf y^n\odot x^{\mathbf k^n}),\quad \mathbf y^i\in S(M_0),\quad x^{\mathbf k^i}\in M_{-1}.$$

The weight is defined on aromatic monomials by
\[\mathbf {wt}(x^{\kappa^1}\odot \cdots \odot x^{\kappa^m} \odot x^{\mathbf k^1}\odot \cdots \odot x^{\mathbf k^n})
=\sum_{i=1}^m \mathbf {wt}(x^{\kappa^i}) +\sum_{i=1}^n \mathbf {wt}(x^{\mathbf k^i}),\]
and is then extended on clumped monomials by
\[\mathbf {wt}((\mathbf y^1\odot x^{\mathbf k^1})\diamond \cdots \diamond (\mathbf y^n\odot x^{\mathbf k^n}))
=\sum_{i=1}^n \mathbf {wt}(\mathbf y^i\odot x^{\mathbf k^i}).\]
Note that the weight on monomials in $\AA M_{-1}$ is $-1$ as expected.
The symmetry factor on $\AA \MM$ is given by,
\[\sigma ((x^{\kappa^1})^{\odot q_1}\odot \cdots \odot (x^{\kappa^m})^{\odot q_m} \odot (x^{\mathbf k^1})^{\odot p_1}\odot \cdots \odot (x^{\mathbf k^n})^{\odot p_n})
=\prod_{j=1}^m q_j!(\kappa^j!)^{q_j}\prod_{i=1}^n p_i!(\mathbf k^i!)^{p_i}.\]
Then, we have on clumped monomials
\[\sigma ((\mathbf y^1\odot x^{\mathbf k^1})^{\diamond p_1}\diamond \cdots \diamond (\mathbf y^n\odot x^{\mathbf k^n})^{\diamond p_n})
=\prod_{i=1}^n p_i!\sigma (\mathbf y^i\odot x^{\mathbf k^i})^{p_i}.\]
The degree of an element of $\AA \MM$ is
\[
\deg(x^{\kappa^1}\odot \cdots \odot x^{\kappa^m} \odot x^{\mathbf k^1}\odot \cdots \odot x^{\mathbf k^n})
=\sum_{i=1}^m \sum_{a\in C, j\geq -1}\kappa_{j}^{i,a} +\sum_{i=1}^n \sum_{a\in C, j\geq -1}\mathbf k_{j}^{i,a},\]
and analogously on $\CC\MM$.
The pairing on $\AA \MM$ (respectively $\CC\MM$) is given by
\[\langle\mathbb M, \mathbb M'\rangle:= \sigma(\mathbb M)\ind_{\mathbb M=\mathbb M'},\]
where we use the symmetry coefficient on $\AA\MM$ (respectively $\CC\MM$).

Let us now extend the product $\rhd$.
The bilinear product $\rhd \colon \AA M_{-1}\times M_n$ for $n=-1,0$ is defined by $(\mathbf y \odot x^{\mathbf k})\rhd P =\mathbf y \odot (x^{\mathbf k}\cdot \partial P)$.
Then, $\rhd$ extends on $S(M_0)$ and $\AA M_{-1}$ by the Leibniz rule:
\begin{align*}
    (\mathbf y^1 \odot x^{\mathbf k})\rhd (x^{\kappa^1}\odot \cdots \odot x^{\kappa^m}) &=
    \sum_{j=1}^m \mathbf y^1\odot x^{\kappa}\odot \cdots \odot (x^{\mathbf k^1} \rhd x^{\kappa^j}) \odot \cdots \odot x^{\kappa^m}\\
    (\mathbf y^1 \odot x^{\mathbf k^1})\rhd (\mathbf y^2 \odot x^{\mathbf k^2}) 
    &= \mathbf y^1\odot (x^{\mathbf k^1} \rhd \mathbf y^2) \odot x^{\mathbf k^2}
    +\mathbf y^1\odot \mathbf y^2 \odot (x^{\mathbf k^1} \rhd x^{\mathbf k^2})
\end{align*}
As $\rhd$ is defined as a derivation, the aromatic Novikov algebra $(\AA M_{-1},\rhd)$ naturally satisfies the identities \eqref{Novikov} and \eqref{NAP}, and thus is a pre-Lie algebra.
More precisely, we show that the aromatic Novikov algebra $\AA M_{-1}$ is equipped with a pre-Lie-Rinehart algebra structure.
\begin{proposition}\label{prelierinehart}
The aromatic Novikov algebra $\AA M_{-1}=S(M_0)\otimes M_{-1}$ is a pre-Lie-Rinehart algebra for the product
\begin{eqnarray*}
\AA M_{-1} \times \AA M_{-1} & \longrightarrow & \AA M_{-1}\\
 (\mathbf y^1 \odot x^{\mathbf k^1}, \mathbf y^2 \odot x^{\mathbf k^2})& \longmapsto  & (\mathbf y^1 \odot x^{\mathbf k^1})\rhd (\mathbf y^2 \odot x^{\mathbf k^2})
\end{eqnarray*}
and the induced maps:
\begin{eqnarray*}
& \nabla:\AA M_{-1} \pil \Hom_{\mathbf k}(\AA M_{-1}, \AA M_{1}) \\
& \mathbf y^1 \odot x^{\mathbf k^1} \mapsto \big(\mathbf y^2 \odot x^{\mathbf k^2} \mapsto (\mathbf y^1 \odot x^{\mathbf k^1}) \rhd( \mathbf y^2 \odot x^{\mathbf k^2}) \big)\\
\\
& \rho:\AA M_{-1} \pil \Der_{\mathbf k}(S(M_0), S(M_0))\\
& \mathbf y^1 \odot x^{\mathbf k^1}\mapsto \big(\mathbf y^2 \mapsto (\mathbf y^1 \odot x^{\mathbf k^1}) \rhd \mathbf y^2\big)
\end{eqnarray*}
where the space $\Der_{\mathbf k}(S(M_0), S(M_0))$ collects all the derivation maps on $S(M_0)$ and $\AA M_{-1}$ is a Lie algebra for the Lie bracket $[x, y]=x\rhd y - y\rhd x$.
\end{proposition}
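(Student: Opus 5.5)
The cleanest route is to view everything inside the commutative algebra $\overline{N}(C)$ with its derivation $\partial$. For $X=\mathbf y^1\odot x^{\mathbf k^1}\in\AA M_{-1}$, the operator $X\rhd(-)$ on $S(M_0)$ and on $\AA M_{-1}$ is the coefficient $\mathbf y^1$ times the derivation
\[
D_{x^{\mathbf k^1}}:=\sum (x^{\mathbf k^1}\cdot\partial)\text{ acting on one factor at a time}.
\]
This is the Leibniz extension of $P\mapsto x^{\mathbf k^1}\partial P$ to the symmetric algebra. So I would first record that, for fixed $X$, $\rho(X)$ is a derivation of $S(M_0)$. This is immediate from the Leibniz-rule definition of $\rhd$ on $S(M_0)$, and it also requires checking weights: $x^{\mathbf k}\partial$ preserves the weight, since $x^{\mathbf k}$ has weight $-1$ and $\partial$ has degree one. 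Hence $\rho$ indeed lands in $\Der_{\mathbf k}(S(M_0),S(M_0))$ and $\nabla$ lands in maps $\AA M_{-1}\to\AA M_{-1}$. Next I would check that $\rho$ is $S(M_0)$-linear, i.e. $(\mathbf z\odot X)\rhd \mathbf y=\mathbf z\odot(X\rhd\mathbf y)$, which is built into the definition. Finally, the Leibniz rule $[X,fY]=\rho(X)(f)Y+f[X,Y]$ follows from two facts. First, $X\rhd(fY)=(X\rhd f)Y+f(X\rhd Y)$, by the second displayed formula defining $\rhd$ on $\AA M_{-1}$. Second, $(fY)\rhd X=f(Y\rhd X)$, by $S(M_0)$-linearity in the left argument.

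The core of the argument is the pre-Lie identity on $\AA M_{-1}$; Novikov's right-NAP identity is a bonus and not needed. Writing $X=\mathbf y\odot p$, $Y=\mathbf z\odot q$, $Z=\mathbf w\odot r$ with $p,q,r\in M_{-1}$, I would expand both $X\rhd(Y\rhd Z)$ and $(X\rhd Y)\rhd Z$ using the derivation property. Denote by $D_p$ the derivation of $S(M_0)\otimes M_{-1}$ (or of the ambient symmetric algebra on $M_0\oplus M_{-1}$) extending $x^{p}\partial$, so that $X\rhd U=\mathbf y\, D_p(U)$. Then
\[
X\rhd(Y\rhd Z)=\mathbf y\,D_p(\mathbf z)\,D_q(Z)+\mathbf y\mathbf z\,D_pD_q(Z),
\]
and
\[
(X\rhd Y)\rhd Z=\mathbf y\,D_p(\mathbf z)\,D_q(Z)+\mathbf y\mathbf z\,D_{p\partial q}(Z).
\]
The first terms cancel, so the associator equals $\mathbf y\mathbf z\,(D_pD_q-D_{p\partial q})(Z)$. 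The claim then reduces to showing that $D_pD_q-D_{p\partial q}$ is symmetric in $(p,q)$.

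This symmetry is the main obstacle, and it is where commutativity enters. Applying the Leibniz rule to $D_pD_q$ on a product of factors produces two kinds of terms. The first kind consists of terms where $D_p$ and $D_q$ hit two different factors; these are manifestly symmetric in $(p,q)$. The second kind consists of terms where both hit the same factor $u$, giving $p\,\partial(q\,\partial u)=p\,\partial q\,\partial u+pq\,\partial^2u$. Subtracting $D_{p\partial q}$ on that factor, which is exactly $p\,\partial q\,\partial u$, leaves $pq\,\partial^2 u$, and this is symmetric because $\overline{N}(C)$ is commutative. So the pre-Lie identity holds.

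The pre-Lie identity implies that the commutator is a Lie bracket. For $\rho$ to be a Lie morphism, I would compute $[\rho(X),\rho(Y)](f)$ for $f\in S(M_0)$. The same two-kinds-of-terms computation gives $\rho(X)\rho(Y)f-\rho(Y)\rho(X)f=\big((X\rhd Y)-(Y\rhd X)\big)\rhd f=\rho([X,Y])f$, that is, the pre-Lie identity applied with $Z$ replaced by elements of $S(M_0)$. The same symmetric-difference argument applies verbatim there, since $D_p$ acts on $S(M_0)$ by the same Leibniz rule.
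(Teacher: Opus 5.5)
Your proposal is correct, and it follows the same derivation-based idea as the paper. It also proves more than the paper's proof does.

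For the Leibniz rule you argue exactly as the paper: expand $X\rhd(fY)$ using the Leibniz formula defining $\rhd$, and use that $\rhd$ is $S(M_0)$-linear in its left argument.

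The paper's proof stops after the Leibniz rule. It says only that ``one shows similarly'' that the commutator is a Lie bracket, and it leans on the earlier, unproved remark that $\rhd$, being defined as a derivation, ``naturally'' satisfies the pre-Lie and right-NAP identities. It never checks that $\rho$ is a Lie algebra morphism. Your associator formula $\mathbf y\mathbf z\,(D_pD_q-D_{p\partial q})(Z)$ actually supplies the pre-Lie identity, via the observation that the only term not manifestly symmetric is $pq\,\partial^2 u$, which is symmetric by commutativity of $\overline{N}(C)$. Rerunning the same computation with $Z$ replaced by $f\in S(M_0)$ gives the anchor property. So your argument fills in what the paper only asserts.

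One correction to your wording: the right-NAP identity is not a ``bonus'' that happens to be unneeded. It fails on $S(M_0)\otimes M_{-1}$ as soon as multiaromas appear, so the paper's remark to the contrary is inaccurate. For instance, with $X=x_{-1}$, $Y=x_0\odot x_{-1}$ and $Z=x_{-1}$, one finds
\[
(X\rhd Y)\rhd Z-(X\rhd Z)\rhd Y=x_{-1}x_1\odot x_{-1}x_0-x_{-1}x_0x_1\odot x_{-1}\neq 0 .
\]
The factor $\partial Z=x_0$ is absorbed by the tree part in the first term and by the aroma in the second, and the tensor product $S(M_0)\otimes M_{-1}$ does not identify the two. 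It is therefore essential, not merely economical, that your proof uses only the pre-Lie identity.
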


\begin{proof}
Let us check that the map $\rho$ satisfies the Leibniz rule: for $\mathbf y^1 \odot x^{\mathbf k^1}$, $\mathbf y^2 \odot x^{\mathbf k^2}\in \AA M_{-1}$ and $\mathbf y\in S(M_0)$, we have
\begin{align*}
[\mathbf y^1 \odot x^{\mathbf k^1}, \mathbf y \odot \mathbf y^2 \odot x^{\mathbf k^2}]
& = (\mathbf y^1 \odot x^{\mathbf k^1}) \rhd (\mathbf y \odot \mathbf y^2 \odot x^{\mathbf k^2}) - (\mathbf y \odot \mathbf y^2 \odot x^{\mathbf k^2}) \rhd (\mathbf y^1 \odot x^{\mathbf k^1})\\
& = \mathbf y \odot \mathbf y^1 \odot x^{\mathbf k^1} \odot \partial(\mathbf y^2 \odot x^{\mathbf k^2})
- \mathbf y \odot \mathbf y^2 \odot x^{\mathbf k^2} \odot \partial(\mathbf y^1 \odot x^{\mathbf k^1})\\
&+\mathbf y^1 \odot x^{\mathbf k^1} \odot \partial(\mathbf y) \odot \mathbf y^2 \odot x^{\mathbf k^2}\\
& =  \mathbf y\odot [\mathbf y^1 \odot x^{\mathbf k^1}, \mathbf y^2 \odot x^{\mathbf k^2}] + \rho(\mathbf y^1 \odot x^{\mathbf k^1})(\mathbf y) \mathbf y^2 \odot x^{\mathbf k^2}.
\end{align*}
One shows similarly that $[-, -]$ is a Lie bracket.
\end{proof}

\begin{remark}
Following \cite{Grong23pla}, the product $\rhd$ can be seen as a connection on $\AA M_{-1}$ with vanishing torsion and curvature. The multiaromas represent the functions, while the aromatic Novikov algebra contains the analogues of vector fields.
\end{remark}

\subsection{The Hopf algebras of aromatic and clumped monomials}
\label{aroclump}

Our aim is to extend the LOT Hopf algebra \cite{Linares21tsg, Zhu24fna} to the aromatic monomials. We recall that the standard LOT Hopf algebra generated by a set $C$ is $\HH_{LOT}^{C}=\big(S(N(C)), \star, \Delta, \ind, \epsilon\big)^\circ$, that is, the graded dual of the Hopf algebra $\big(S(N(C)), \star, \Delta, \ind, \epsilon \big)$, equipped with the Grossman-Larson product $\star$ and the deshuffle coproduct $\Delta$. This structure is analogous to the BCK Hopf algebra of Butcher trees \cite{Connes98har} for multi-indices. We extend here the LOT structure to the aromatic context.

We provide a definition of the aromatic LOT coproduct using free edges and cuts in Section \ref{section:Hopf_structure}.
We will then provide a straightforward explicit formula for the coproduct, that we give here directly for the sake of simplicity. The proof of this theorem is postponed to Section \ref{explicit formula}.
\begin{theorem} \label{arobck1}
The aromatic LOT coproduct $\Delta_{LOT}^{aro}: \AA M_{-1} \rightarrow \AA\MM\otimes \AA M_{-1}$ is the unique unital morphism defined by
\begin{equation}\label{coproduct}
\Delta_{LOT}^{aro}(\mathbf y\odot x^{\mathbf k})  = \Delta_{LOT}^{aro}(x^{\kappa_1})\odot \cdots\odot \Delta_{LOT}^{aro}(x^{\kappa_m})\odot\Delta_{LOT}^{aro}(x^{\mathbf k})
\end{equation}
where $\mathbf y=x^{\kappa_1}\odot\cdots\odot x^{\kappa_m}$, the coproduct on aromas $M_0$ and monomials $M_{-1}$ is
\begin{align*}
\Delta_{LOT}^{aro}(x^\kappa) & = \sum_{t\geq 0}\sum_{\underset{\mathbf {wt}(x^{\kappa^i})=-1}{\kappa=\kappa^1 + \cdots + \kappa^t + \overline{\kappa}}} \frac{\kappa!}{\sigma(x^{\kappa^1}\odot \cdots\odot x^{\kappa^t})\overline{\kappa}!} x^{\kappa^1}\odot \cdots\odot  x^{\kappa^t}\otimes \overline{\partial}^t x^{\overline{\kappa}} + x^{\kappa}\otimes \mathbf 1,\\
\Delta_{LOT}^{aro}(x^{\mathbf k})& =   \sum_{r\geq 0} \sum_{\underset{\mathbf {wt}(x^{\mathbf k^i})=-1}{\mathbf k = \mathbf k^1 + \cdots + \mathbf k^r + \overline{\mathbf k}}} \frac{\mathbf k!}{\sigma(x^{\mathbf k^1}\odot \cdots \odot  x^{\mathbf k^r})\overline{\mathbf k}!}x^{\mathbf k^1}\odot \cdots\odot  x^{\mathbf k^r}\otimes \overline{\partial}^r x^{\overline{\mathbf k}} + x^{\mathbf k}\otimes \mathbf 1,    
\end{align*}
and $\overline{\partial}$ is explictly given in Proposition \ref{prop:formula_partial_bar}.
\end{theorem}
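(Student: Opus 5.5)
Our approach follows the non-aromatic LOT case \cite{Linares21tsg, Zhu24fna}. We first take the coproduct from Section \ref{section:Hopf_structure}: it is defined by duality with the Grossman-Larson product $\star$ on $\CC\MM$ (or $\AA\MM$), realised combinatorially through free edges and cuts. The goal is then to compute it explicitly on generators and to check that the resulting map is multiplicative for $\odot$.

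\textbf{Step 1: multiplicativity and reduction to generators.} The dual product $\star$ is built by the Guin-Oudom procedure from $\rhd$. Since $\rhd$ acts on $\mathbf y\odot x^{\mathbf k}$ as a derivation (Leibniz rule on $S(M_0)$ and on $\AA M_{-1}$), dualising gives a coproduct that is a morphism for $\odot$. This yields \eqref{coproduct} and uniqueness of a unital morphism once the values on $x^\kappa\in M_0$ and $x^{\mathbf k}\in M_{-1}$ are fixed. So the statement reduces to the two displayed formulas.

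\textbf{Step 2: the formula on a single monomial.} We compute $\langle \Delta^{aro}_{LOT} x^{\mathbf k}, \mathbb M\otimes z\rangle=\langle x^{\mathbf k}, \mathbb M\star z\rangle$ for $\mathbb M=x^{\mathbf k^1}\odot\cdots\odot x^{\mathbf k^r}$. By Guin-Oudom,
\[
\mathbb M\star z=\sum \text{(terms)}+ x^{\mathbf k^1}\cdots x^{\mathbf k^r}\,\partial^r z \quad\text{(multi-grafting part)}.
\]
The multi-grafting part is the one that reaches a single monomial. Grafting the $r$ factors onto $z$ amounts to multiplying them into $z$ while applying the derivation $\partial$ $r$ times, once per grafted factor. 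Reading this in reverse, a monomial $x^{\mathbf k}$ is split as $\mathbf k=\mathbf k^1+\dots+\mathbf k^r+\overline{\mathbf k}$, and the remaining piece must be the preimage of $\overline{\mathbf k}$ under the adjoint of $\partial^r$. This is how $\overline\partial^r$ appears: it is the pairing-adjoint of $\partial^r$, acting on exponents by shifting entries $j\mapsto j-1$. The combinatorial factor $\mathbf k!/(\sigma(\cdots)\overline{\mathbf k}!)$ comes from converting between the pairing $\langle\cdot,\cdot\rangle=\sigma\,\ind$ on both sides: there are multinomial choices of which variables belong to each $x^{\mathbf k^i}$, and the $\sigma$ of the product accounts for permutations of identical factors. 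The aroma case $x^\kappa\in M_0$ is identical, with weight $0$ instead of $-1$; the weight constraint $\mathbf{wt}(x^{\mathbf k^i})=-1$ holds because only elements of $\AA M_{-1}$ can be grafted. The term $x^{\mathbf k}\otimes\mathbf 1$ is the unit term.

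\textbf{Step 3: the role of Proposition \ref{prop:formula_partial_bar}.} We invoke it to guarantee that $\overline\partial^r$ is well defined and has the explicit coefficients matching the adjoint. In particular we need $\langle \partial^r P, x^{\overline{\mathbf k}}\rangle=\langle P,\overline\partial^r x^{\overline{\mathbf k}}\rangle$ with the correct normalisation.

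The main obstacle is the bookkeeping of symmetry factors in Step 2. We must ensure that the $\kappa!$, $\sigma$ and adjoint coefficients combine exactly, especially when several $\mathbf k^i$ coincide and when aromas and trees share variables. We would first verify this on low-degree examples, for instance $x_{-1}^2x_1$ and $x_{-1}x_1$, and then argue in general by counting labelled vertices, as in the proof of the Hopf embedding $j$ in \cite{Zhu24fna}.
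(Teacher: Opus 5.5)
Your route is genuinely different from the paper's. The paper never introduces a Grossman--Larson product on $\AA\MM$. In Section~\ref{explicit formula} the coproduct is defined directly by admissible cuts of monomials (Definition~\ref{admcutmonomial}), and the coefficient $\frac{\kappa!}{\sigma(\cdots)\overline{\kappa}!}$ is the size $\|\mathbf c\|$ of a class of cuts. The substantive step is Lemma~\ref{aroj}, which pushes each term to aromatic trees through $j$. It uses $\overline{\delta}\circ\overline{j^{aro}}=\overline{j^{aro}}\circ\overline{\partial}$ (Corollary~\ref{deltabar-j}), the full-trunk identity \eqref{fulltrunk}, the grafting count \eqref{numgrafting} and the cut/grafting comparison of Proposition~\ref{propgraftcut}. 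With these it identifies each term with the BCK cut terms summed over the fibre of $\Phi$, so the explicit formula and $(j^{aro}\otimes j^{aro})\circ\Delta_{LOT}^{aro}=\Delta_{BCK}^{aro}\circ j^{aro}$ come out together.

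You instead stay inside multi-indices and dualise. Your Guin--Oudom multi-grafting $(x^{\mathbf k^1}\odot\cdots\odot x^{\mathbf k^r})\rhd z=x^{\mathbf k^1}\cdots x^{\mathbf k^r}\,\partial^r z$ is correct; already $P_1\rhd(P_2\rhd Q)-(P_1\rhd P_2)\rhd Q=P_1P_2\,\partial^2 Q$. The step you call the main obstacle needs no counting of labelled vertices:
\begin{itemize}
\item For $\overline{\mathbf k}=\mathbf k-\sum_i\mathbf k^i$ one has $\langle x^{\mathbf k},x^{\mathbf k^1}\cdots x^{\mathbf k^r}Q\rangle=\frac{\mathbf k!}{\overline{\mathbf k}!}\langle x^{\overline{\mathbf k}},Q\rangle$.
\item Iterating the proposition that $\overline{\partial}$ is the transpose of $\partial$ gives $\langle x^{\overline{\mathbf k}},\partial^r z\rangle=\langle\overline{\partial}^r x^{\overline{\mathbf k}},z\rangle$. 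This adjointness is that proposition, not Proposition~\ref{prop:formula_partial_bar}, which only expands $\overline{\partial}^r$.
\item Dividing by $\sigma(\mathbb M)\sigma(z)$ and summing over $z$ gives exactly $\frac{\mathbf k!}{\sigma(\mathbb M)\overline{\mathbf k}!}\,\mathbb M\otimes\overline{\partial}^r x^{\overline{\mathbf k}}$, with $\mathbb M$ running over elements of $S(M_{-1})$, i.e.\ multisets.
\item For $x^\kappa\in M_0$, only $\mathbb M\in S(M_{-1})$ with $\mathbb N\in M_0$, or $(\mathbb M,\mathbb N)=(x^\kappa,\mathbf 1)$, contribute to $\langle x^\kappa,\mathbb M\star\mathbb N\rangle$, so the same computation applies.
\end{itemize}

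What you must supply is the object you are dualising, because the paper does not provide it. It has to be a Grossman--Larson product on $\AA\MM$, not on $\CC\MM$, since lone aromas $x^\kappa$ are not elements of $\CC\MM$. In it, multiaromas act by left multiplication, $(\mathbf y\odot\mathbb T)\star\mathbb N=\mathbf y\odot(\mathbb T\star\mathbb N)$, and trees are grafted onto aromas by the Leibniz rule; this is the enveloping algebra of the pre-Lie--Rinehart pair $(S(M_0),\AA M_{-1})$. Multiplicativity of $\Delta_{LOT}^{aro}$ for $\odot$ is then dual to the compatibility of this $\star$ with the deshuffle coproduct in which aromas are primitive. Step~1 must establish that compatibility, not merely the Leibniz rule for $\rhd$.

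The trade-off is this. Your route is shorter and intrinsic, and since $\Phi$ is a pre-Lie--Rinehart morphism it would give the embedding of Theorem~\ref{thm:embedding} essentially by duality. The paper's route avoids building any Grossman--Larson or Rinehart structure on $\AA\MM$, at the cost of the cut and grafting combinatorics on trees.
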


Let us consider the projection $\varphi: \CC\MM\rightarrow \AA\MM$, defined by
\[\varphi: (\mathbf y^1\odot x^{\mathbf k^1})\diamond\cdots\diamond (\mathbf y^n\odot x^{\mathbf k^n})\in \CC\MM \longmapsto \mathbf y^1\odot \cdots \odot \mathbf y^n \odot x^{\mathbf k^1}\odot \cdots \odot x^{\mathbf k^n}\in \AA\MM.\]
Since the multi-aromas alone $S(M_0)$ do not appear in $\CC\MM$, the algebra $\AA\MM$ does not inject straightforwardly into $\CC\MM$.
Let $\varphi^*: \AA\MM\rightarrow \CC\MM$ be the dual of $\varphi$ for the inner product $\langle -,-\rangle$, that is,
\[\langle \varphi(\M), \M'\rangle = \langle \M, \varphi^* (\M') \rangle.\]
The LOT coproduct on aromatic monomials extends straightforwardly as a morphism for $\diamond$ as
$\Delta_{LOT}^{aro}\colon \CC\MM\rightarrow \AA\MM\otimes \CC\MM$.
Then, the LOT coproduct on clumped monomials is for $\pi_{cl}\in \CC\MM$:
\[\Delta_{LOT}^{cl}=(\varphi^* \otimes \ind)\circ \Delta_{LOT}^{aro}.\]
% This equips $\HH_{LOT}^{cl}=(\CC\MM, \odot, \Delta_{LOT}^{cl})$\moda{ with ?}.

The LOT structures on aromatic and clumped monomials yield analogous Hopf structures to the ones of aromatic forests $\AA\FF$ and clumped forests $\CC\FF$. The proof is analogous to the one on tree structures from \cite{Bronasco22cef}.
\begin{proposition}
The aromatic multi-indices $(\AA\MM,\odot, \Delta_{LOT}^{aro})$ and the clumped multi-indices $(\CC\MM, \diamond, \Delta_{LOT}^{cl})$ equipped with the LOT coproduct are Hopf algebras.
\end{proposition}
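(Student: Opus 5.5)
The plan is to proceed exactly as for aromatic and clumped forests in \cite{Bronasco22cef}: first establish that $(\AA\MM,\odot,\Delta_{LOT}^{aro})$ is a graded connected bialgebra, then deduce the clumped case through the projection $\varphi$, and finally obtain the antipodes for free from connectedness. The grading used throughout is the degree $\deg$ (number of variables $x^a_j$ counted with multiplicity). Every term of \eqref{coproduct} either is $x^\kappa\otimes\mathbf 1$ or takes $\mathbf k^1+\dots+\mathbf k^r+\overline{\mathbf k}=\mathbf k$ and applies the degree-preserving $\overline\partial^r$, so the formula is homogeneous for $\deg$. The degree-zero part of $\AA\MM$ and of $\CC\MM$ is $\K\mathbf 1$, hence both are connected. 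Multiplicativity of the coproduct holds by construction, since Theorem \ref{arobck1} defines $\Delta_{LOT}^{aro}$ as the unique unital $\odot$-morphism; the counit $\epsilon$, the projection onto $\K\mathbf 1$, is then obviously a counit. Once coassociativity is known, the standard Takeuchi argument for graded connected bialgebras gives the antipode.

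The core step is coassociativity of $\Delta_{LOT}^{aro}$. I would not attempt it on the explicit formula of Theorem \ref{arobck1}. Instead I would use duality: show that, up to the symmetry factor $\sigma$ in the pairing $\langle-,-\rangle$, $\Delta_{LOT}^{aro}$ is the transpose of the Grossman--Larson product $\star$ on $S(\AA M_{-1})$, resp. on $\AA\MM$. That product is obtained from the pre-Lie(-Rinehart) product $\rhd$ of Proposition \ref{prelierinehart} via the Guin--Oudom construction \cite{Oudom08otl}, extended to the aromatic setting as in \cite{Bronasco22cef}. Associativity of $\star$ then transposes to coassociativity. Concretely, I would verify on generators that
\[
\langle \Delta_{LOT}^{aro}(\mathbf y\odot x^{\mathbf k}),\ \M\otimes \mathbf y'\odot x^{\mathbf k'}\rangle
\]
equals the coefficient of $\mathbf y\odot x^{\mathbf k}$ in $\M\curvearrowright(\mathbf y'\odot x^{\mathbf k'})$. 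Here $\M\curvearrowright$ is the Guin--Oudom extension of $\rhd$, which acts by $r$-fold insertion of the weight $-1$ factors $x^{\mathbf k^i}$ through $\partial$ and by the derivation action on aromas. The combinatorial factor $\mathbf k!/(\sigma(\cdot)\overline{\mathbf k}!)$ in Theorem \ref{arobck1} is exactly what this pairing produces, which is what I would check, taking $\overline\partial$ from Proposition \ref{prop:formula_partial_bar}. Multiplicativity for $\odot$ is compatible with $\star$ being a coproduct-compatible product, since $\Delta$ is the deshuffle, dual to $\odot$.

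For the clumped case I would show that $\Delta_{LOT}^{cl}=(\varphi^*\otimes\ind)\circ\Delta_{LOT}^{aro}$ is the transpose of the clumped Grossman--Larson product on $S(\AA M_{-1})$. I would use two facts: $\varphi$ intertwines the actions of clumped and aromatic monomials, and $\varphi^*$ simply redistributes multiaromas among clumps. Coassociativity then reduces to the identity
\[
(\varphi^*\otimes\ind)\circ\Delta_{LOT}^{aro}\circ\varphi^*=(\varphi^*\otimes\varphi^*)\circ\Delta_{LOT}^{aro}
\]
on the relevant components. Multiplicativity for $\diamond$ comes from the fact that $\varphi^*$ is a morphism from $(\AA\MM,\odot)$ restricted appropriately into $(\CC\MM,\diamond)$ (deshuffle-type compatibility), mirroring the argument for $\psi^*$.

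The main obstacle I expect is the careful bookkeeping of symmetry factors in the duality. Monomials carry the multinomial-type factor $\mathbf k!$, and in aromatic multi-indices $\sigma$ of a multiaroma is not multiplicative across aromas in the naive way. I would first check the identity on small examples: one aroma $x^\kappa$ with $\kappa$ of degree $\le 3$. The general matching would then be proved by induction on $r$ (the number of extracted weight $-1$ factors), using the Leibniz rule satisfied by $\partial$ and $\overline\partial$.
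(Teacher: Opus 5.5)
Your treatment of $(\AA\MM,\odot,\Delta_{LOT}^{aro})$ follows the route the paper intends. The paper's proof is only a pointer to the tree case of \cite{Bronasco22cef}, i.e.\ graded connectedness plus transposition of the Guin--Oudom Grossman--Larson product, and that half of your argument is sound.

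The clumped half, however, rests on a false claim. $\varphi^*$ is the transpose of the algebra morphism $\varphi$. It is therefore compatible with the deshuffle coproducts, but it is not multiplicative from $\odot$ to $\diamond$, under any restriction. Take $\mathbb M=x_0\odot x_{-1}x_0\odot x_{-1}$. Then
\[
\varphi^*(\mathbb M)=(x_0\odot x_{-1}x_0)\diamond(x_{-1})+(x_{-1}x_0)\diamond(x_0\odot x_{-1}),
\]
whereas $\varphi^*(x_0\odot x_{-1}x_0)\diamond\varphi^*(x_{-1})$ is only the first term.

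Suppose $\Delta_{LOT}^{cl}$ is read as $\varphi^*$ applied to the whole $\odot$-product of left factors of the $\diamond$-multiplicative map $\Delta_{LOT}^{aro}\colon\CC\MM\to\AA\MM\otimes\CC\MM$. Then it is neither multiplicative nor even counital. On $\mathbb C=(x_0\odot x_{-1}x_0)\diamond(x_{-1})$, its component with right factor $\mathbf 1$ is $\varphi^*(\mathbb M)\otimes\mathbf 1\neq\mathbb C\otimes\mathbf 1$. Your displayed identity $(\varphi^*\otimes\ind)\circ\Delta_{LOT}^{aro}\circ\varphi^*=(\varphi^*\otimes\varphi^*)\circ\Delta_{LOT}^{aro}$ fails for the same reason. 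Evaluated on $\mathbb M$, the tensor $(x_0\odot x_{-1})\otimes(x_{-1})\diamond(x_{-1})$ (aroma fully extracted, ladder cut) has coefficient $2$ on the left, once for each clump into which $\varphi^*$ can place $x_0$. On the right it has coefficient $1$.

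The fix is to define $\Delta_{LOT}^{cl}$ as $(\varphi^*\otimes\ind)\circ\Delta_{LOT}^{aro}$ on single clumps $\mathbf y\odot x^{\mathbf k}\in\AA M_{-1}$ only, and then extend it as a $\diamond$-morphism. This is exactly how $\Delta_{BCK}^{cl}$ is built from $\psi^*$ on $\AA T$. Multiplicativity and the counit property are then automatic.

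Coassociativity then comes from your first ``fact'', which is correct. In the Guin--Oudom symmetric brace, the graftings of one clump onto the multiaromas of another cancel. Hence $\mathbb C\curvearrowright z=\varphi(\mathbb C)\curvearrowright z$ for $z\in\AA M_{-1}$. So the transpose of the clumped Grossman--Larson product, restricted to a single clump, is $(\varphi^*\otimes\ind)\circ\Delta_{LOT}^{aro}$. Compatibility of $\star$ with the deshuffle of clumps makes the full transpose $\diamond$-multiplicative.

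If you prefer the identity route, the correct statement is $\Delta_{LOT}^{cl}\circ\varphi^*=(\varphi^*\otimes\varphi^*)\circ\Delta_{LOT}^{aro}$, with the multiplicative $\Delta_{LOT}^{cl}$ on the left. On the example above, the second clumping now contributes $\varphi^*(x_0)\otimes(x_{-1})=0$ from its clump $(x_0\odot x_{-1})$, which restores the coefficient $1$.
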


\subsection{Hopf embedding between multi-indices and forests}

In this section, we present the main result of this paper, that is, that there exist Hopf embeddings $j^{aro}$ and $j^{cl}$ from aromatic/clumped monomials into their trees counterpart.

The Hopf embeddings are based on the fertility map $\Phi$, which is defined as follows.
\begin{definition}\label{fertilitymap}
Let the fertility map $\Phi$ be given by
\begin{align*}
\Phi: \AA\FF &\rightarrow \AA \MM\\
a^1\cdots a^n t &\mapsto \Phi(a^1)\odot \cdots \odot \Phi(a^n)\odot \Phi(t)
\end{align*}
where
\[\Phi(t)=\prod_{v\in V(t)}x_{f(v) - 1}^{d(v)},\quad
\Phi(a^j)=\prod_{v\in V(a^j)}x_{f(v) - 1}^{d(v)},\]
and $d(v)$ and $f(v)$ are the colour and the fertility of the vertex $v$. 
The fertility map $\Phi$ is defined on $\CC\FF$ as a morphism:
\[
\Phi\big((\mathbf{a}^1\tau^1)\cdots (\mathbf{a}^n\tau^n)\big)=\Phi(\mathbf{a}^1\tau^1)\diamond \cdots \diamond \Phi(\mathbf{a}^n\tau^n).
\]
\end{definition}

\begin{theorem}
The fertility map induces a pre-Lie-Rinehart morphism $\Phi: \AA\TT \rightarrow \AA M_{-1}$.
\end{theorem}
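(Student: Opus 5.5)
The plan is to verify directly the three pieces of data in Definition~\ref{L-lieRinehart}: compatibility with the module structures, with the anchor maps, and with the connections $\nabla$. Concretely, I would show that $\Phi$ restricts to a unital algebra morphism $\AA\to S(M_0)$, that $\Phi(\mathbf a\tau)=\Phi(\mathbf a)\odot\Phi(\tau)$, and that
\[
\Phi(\tau_1\curvearrowright\tau_2)=\Phi(\tau_1)\rhd\Phi(\tau_2),\qquad \Phi(\tau\curvearrowright \mathbf a)=\Phi(\tau)\rhd\Phi(\mathbf a),
\]
for $\tau,\tau_1,\tau_2\in\AA T$ and $\mathbf a\in\AA$. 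The first two properties hold by Definition~\ref{fertilitymap}, since $\Phi$ is defined multiplicatively. Compatibility of the Lie brackets then follows from that of $\rhd$, so the Lie-algebra and anchor-morphism axioms transfer automatically.

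\textbf{Step 1: weights.} First I check that $\Phi$ lands in the right spaces. For a tree $t$ with $n$ vertices, $\sum_v f(v)=n-1$, so $\mathbf{wt}(\Phi(t))=\sum_v (f(v)-1)=-1$ and $\Phi(t)\in M_{-1}$. For an aroma, the numbers of edges and vertices are equal, so $\Phi(a)\in M_0$. Hence $\Phi(\AA T)\subset S(M_0)\otimes M_{-1}=\AA M_{-1}$.

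\textbf{Step 2: key lemma.} Let $g$ be a tree or an aroma, $v$ a vertex of $g$, and $t$ a tree. Grafting the root of $t$ onto $v$ only raises the fertility of $v$ by one and leaves every other fertility and colour unchanged. Therefore $\Phi(t\curvearrowright_v g)$ is $\Phi(t)\cdot\Phi(g)$ with one factor $x^{d(v)}_{f(v)-1}$ replaced by $x^{d(v)}_{f(v)}$. Now sum over all vertices $v$. For each pair $(a,j)$, the vertices with colour $a$ and fertility $j+1$ are exactly $k^a_j$ in number, where $\Phi(g)=x^{\mathbf k}$. Comparing with \eqref{partialderivation} gives
\[
\Phi(t\curvearrowright g)=\Phi(t)\cdot\partial\Phi(g)=\Phi(t)\rhd\Phi(g),
\]
for $g$ either in $T$ or in $A_0$. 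I expect this multiplicity bookkeeping to be the only genuinely delicate point. In particular, vertices of $g$ that are indistinguishable under $\Phi$ must contribute the integer coefficient $k^a_j$ and no symmetry factor. This works because $\curvearrowright$ sums over vertices rather than over isomorphism classes.

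\textbf{Step 3: extension to multiaromas and aromatic trees.} On a multiaroma $\mathbf a=a^1\cdots a^m$, grafting is a derivation: $\tau\curvearrowright\mathbf a=\sum_i a^1\cdots(\tau\curvearrowright a^i)\cdots a^m$. The product $\rhd$ on $S(M_0)$ is likewise defined by the Leibniz rule. Applying Step 2 termwise, together with multiplicativity of $\Phi$, yields $\Phi(t\curvearrowright\mathbf a)=\Phi(t)\rhd\Phi(\mathbf a)$.

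For general aromatic trees, Definition~\ref{def:div} gives
\[
(\mathbf a_1\tau_1)\curvearrowright(\mathbf a_2\tau_2)=\mathbf a_1\mathbf a_2(\tau_1\curvearrowright\tau_2)+\mathbf a_1(\tau_1\curvearrowright\mathbf a_2)\tau_2 .
\]
Applying $\Phi$ and using the steps above, this equals
\[
\Phi(\mathbf a_1)\odot\Phi(\mathbf a_2)\odot(\Phi(\tau_1)\rhd\Phi(\tau_2))+\Phi(\mathbf a_1)\odot(\Phi(\tau_1)\rhd\Phi(\mathbf a_2))\odot\Phi(\tau_2).
\]
This is exactly the defining formula of $(\mathbf y^1\odot x^{\mathbf k^1})\rhd(\mathbf y^2\odot x^{\mathbf k^2})$ in Section~\ref{subsec:free_aromatic_Novikov_algebra}. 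The same computation with $\tau_2$ absent handles the anchor: $\Phi(\rho(\mathbf a_1\tau_1)\mathbf a_2)=\rho(\Phi(\mathbf a_1\tau_1))\Phi(\mathbf a_2)$.

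Combining these identities with $\AA$-linearity, and with Proposition~\ref{prelierinehart} for the structure on the target, shows that $\Phi$ is a morphism of pre-Lie-Rinehart algebras.
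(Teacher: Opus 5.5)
Your proposal is correct and follows essentially the same route as the paper. The paper asserts that $\Phi$ is a pre-Lie morphism ``by construction'' and then displays the Leibniz-type compatibility of the brackets. Your vertex-counting lemma $\Phi(t\curvearrowright g)=\Phi(t)\cdot\partial\Phi(g)$, its Leibniz extension to multiaromas, and your direct check of the anchor supply the justification the paper omits, and the paper's displayed bracket identity then follows automatically.
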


\begin{proof}
By construction, the map $\Phi: (\AA\TT, \curvearrowright) \rightarrow (\AA M_{-1},\rhd)$ is a pre-Lie algebra morphism.
Then, the map $\Phi$ satisfies
\begin{align*}
\Phi([\mathbf{a}^1t^1, \mathbf{a}\mathbf{a}^2 t^2])
&= \Phi(\mathbf{a}^1t^1\curvearrowright \mathbf{a}\mathbf{a}^2 t^2-\mathbf{a}\mathbf{a}^2 t^2\curvearrowright \mathbf{a}^1t^1)\\
&= \Phi(\mathbf{a}^1t^1)\rhd (\Phi(\mathbf{a})\Phi(\mathbf{a}^2 t^2))-\Phi(\mathbf{a})\Phi(\mathbf{a}^2 t^2)\rhd \Phi(\mathbf{a}^1t^1)\\
&= \Phi(\mathbf{a}) [\Phi(\mathbf{a}^1t^1), \Phi(\mathbf{a}^2 t^2)]
+\rho(\Phi(\mathbf{a}^1t^1))(\Phi(\mathbf{a})) \Phi(\mathbf{a}^2 t^2)).
\end{align*}
Hence the result.
\end{proof}

\begin{definition}
Let $j^{aro}$ be the dual map to $\Phi$ on $M_0$ and $M_{-1}$, that is,
\[
\langle \Phi(a), x^\kappa \rangle = \langle a, j^{aro}( x^\kappa ) \rangle,\quad
\langle \Phi(t), \mathbf x^{\mathbf k} \rangle = \langle t, j^{aro}( x^{\mathbf k}) \rangle.
\]
We extend $j^{aro}\colon \AA\MM\rightarrow\AA\FF$ as a morphism
\[
j^{aro}\big(x^{\kappa^1}\odot\cdots \odot x^{\kappa^m}\odot x^{\mathbf k^1}\odot\cdots \odot x^{\mathbf k^n}\big) = j^{aro}(x^{\kappa^1})\cdots j^{aro}(x^{\kappa^m}) j^{aro}(x^{\mathbf k^1}) \cdots j^{aro}(x^{\mathbf k^n}).
\]
The analogue $j^{cl}\colon \CC\MM\rightarrow\CC\FF$ on clumped monomials is
\[
j^{cl}\big((\mathbf{y}^1\odot  x^{\mathbf k^1})\diamond\cdots \diamond (\mathbf{y}^n\odot x^{\mathbf k^n})\big)=j^{aro}(\mathbf{y}^1 \odot x^{\mathbf k^1})\cdots j^{aro}(\mathbf{y}^n\odot x^{\mathbf k^n}). 
\]
\end{definition}

The following formula is deduced straightforwardly from the definition. Note that it does not extend in general to $\AA\MM$.
\begin{lemma}
The maps $j^{aro}$ satisfies
\begin{equation}\label{formulajaro}
j^{aro}(x^{\kappa}) = \sum_{\Phi(a)= x^{\kappa}} \frac{\sigma(x^{\kappa})}{\sigma(a)}a,\quad
j^{aro}(x^{\mathbf{k}}) = \sum_{\Phi(t)= x^{\mathbf{k}}} \frac{\sigma(x^{\mathbf{k}})}{\sigma(t)}t.
\end{equation}
\end{lemma}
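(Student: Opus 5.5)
The plan is to unfold the defining duality of $j^{aro}$ against the basis of aromas (respectively trees). The argument is the same for $x^\kappa\in M_0$ and for $x^{\mathbf k}\in M_{-1}$, so I treat the aroma case and note the changes for trees. Aromas $a\in A^0$ form a basis of the aroma part of $\AA\FF$ that is orthogonal for $\langle\cdot,\cdot\rangle$, with $\langle a,a'\rangle=\sigma(a)\ind_{a=a'}$. Hence $j^{aro}(x^\kappa)$, assumed to lie in the span of single aromas (the order-preserving degree-one part), can be written
\[
j^{aro}(x^\kappa)=\sum_{a}\frac{\langle a, j^{aro}(x^\kappa)\rangle}{\sigma(a)}\,a .
\]

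The first step is to justify that $j^{aro}(x^\kappa)$ really lies in that span, so that the coefficient extraction is legitimate. The point is that $\Phi$ sends a single aroma to a single monomial in $M_0$, since the weight of $\Phi(a)$ is $\sum_v (f(v)-1)=|E|-|V|=0$ for an aroma. It also sends products of several aromas to $\odot$-products of length at least two, and trees to $M_{-1}$. Because the pairing on $\AA\MM$ is diagonal in monomials, $\langle \Phi(\pi),x^\kappa\rangle$ vanishes unless $\pi$ is a single aroma. So the dual map restricted to $x^\kappa$ only sees single aromas, and it is understood as valued in their span.

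The second step is to compute each coefficient. Since $\Phi(a)$ is a single monomial, $\langle\Phi(a),x^\kappa\rangle=\sigma(x^\kappa)\ind_{\Phi(a)=x^\kappa}$, where $\sigma(x^\kappa)=\kappa!$ is the multi-index symmetry factor. Substituting into the expansion gives
\[
j^{aro}(x^\kappa)=\sum_{\Phi(a)=x^\kappa}\frac{\sigma(x^\kappa)}{\sigma(a)}\,a .
\]
The sum is finite because the number of vertices $\deg(x^\kappa)$ is fixed. The tree case is identical, using $\langle t,t'\rangle=\sigma(t)\ind_{t=t'}$ and the fact that $\Phi(t)\in M_{-1}$ has weight $-1$.

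The only step needing care is the first one, namely identifying the target space of the duality so that projecting onto single aromas or trees loses nothing. It is settled by the weight and length count above. Finally, I would remark, as the paper does, that the formula does not extend multiplicatively to $\AA\MM$ through the pairing. The reason is that the symmetry factor of a product of identical factors carries the extra $q!$, while $j^{aro}$ is extended as an algebra morphism rather than as a dual map.
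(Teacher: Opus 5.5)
Your argument is correct and follows the paper's route; the paper only says the formula is deduced straightforwardly from the definition, and expanding $j^{aro}(x^\kappa)$ (resp.\ $j^{aro}(x^{\mathbf k})$) in the orthogonal basis of aromas (resp.\ trees), then reading off each coefficient $\langle \Phi(a),x^\kappa\rangle/\sigma(a)=\sigma(x^\kappa)\ind_{\Phi(a)=x^\kappa}/\sigma(a)$ from the defining duality, is exactly that deduction. Your closing remark is not part of the lemma, and the reason you give for it is not right: the factor $q!$ in $\sigma\big((x^\kappa)^{\odot q}\big)$ is matched by the factor $q!$ in the symmetry coefficient of $q$ identical aromas or trees, together with the multinomial coefficients in the expansion of $j^{aro}(x^\kappa)^q$, so it cancels rather than obstructs, and you should drop that justification.
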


\newcommand{\chetree}{
\tikz[planar forest ] {

\node [b] at (0.0, 0.0) {  } 
child {node [b] at (-0.5, 1.0) {  }  
}
child {node [b] at (0.5, 1.0) {  }  
}
;
}}

\newcommand{\ladrtree}{
\tikz[planar forest ] {

\node [b] at (0.0, 0.0) {  } 
child {node [b] at (0.0, 1.0) {  }  
}
;
}}

\newcommand{\chetreeplus}{
\tikz[planar forest ] {

\node [b] at (0.0, 0.0) {  } 
child {node [b] at (-0.5, 1.0) {  }  
}
child {node [b] at (0.5, 1.0) {  }  
child {node [b] at (0.0, 1.0) {  }  
}
}
;
}}

\newcommand{\chetreeY}{
\tikz[planar forest ] {

\node [b] at (0.0, 0.0) {  } 
child {node [b] at (0.0, 1.0) {  } 
child {node [b] at (-0.5, 1.0) {  }  }
child {node [b] at (0.5, 1.0) {  }  }
}
;
}}

\newcommand{\twolooptree}{
\tikz[planar forest ] {

\draw (-1.0,1.0) edge[-] (0.0,1.0);
\draw (-1.0,1.0) arc (90:270:0.5);
\draw (0.0,1.0) arc (90:-90:0.5);
\draw (-1.0,0.0) edge[-] (0.0,0.0);
\node [draw=none] at (0.0, 0.0) { } 
child {node [b] at (-1.0, 1.0) {  } edge from parent[draw=none] 
child {node [b] at (0.0, 1.0) {  } }
}
child {node [b] at (0.0, 1.0) {  } edge from parent[draw=none] 
child {node [b] at (0.0, 1.0) {  }}
}
;
}}

\newcommand{\twolooptreeV}{
\tikz[planar forest ] {

\draw (-1.0,1.0) edge[-] (0.0,1.0);
\draw (-1.0,1.0) arc (90:270:0.5);
\draw (0.0,1.0) arc (90:-90:0.5);
\draw (-1.0,0.0) edge[-] (0.0,0.0);
\node [draw=none] at (0.0, 0.0) { } 
child {node [b] at (-1.0, 1.0) {  } edge from parent[draw=none] 
}
child {node [b] at (0.0, 1.0) {  } edge from parent[draw=none] 
child {node [b] at (-0.5, 1.0) {  }}
child {node [b] at (0.5, 1.0) {}}
}
;
}}

\newcommand{\onelooptreeY}{
\tikz[planar forest ] {

\draw (-1.0,1.0) arc (90:270:0.5);
\draw (-1.0,1.0) arc (90:-90:0.5);
\node [draw=none] at (0.0, 0.0) { } 
child {node [b] at (-1.0, 1.0) {  } edge from parent[draw=none] 
child {node [b] at (0.0, 1.0) {  }
child {node [b] at (-0.5, 1.0) {  }}
child {node [b] at (0.5, 1.0) {}}
}
}
;
}}

\newcommand{\twolooptreeY}{
\tikz[planar forest ] {

\draw (-1.0,1.0) edge[-] (0.0,1.0);
\draw (-1.0,1.0) arc (90:270:0.5);
\draw (0.0,1.0) arc (90:-90:0.5);
\draw (-1.0,0.0) edge[-] (0.0,0.0);
\node [draw=none] at (0.0, 0.0) { } 
child {node [b] at (-1.0, 1.0) {  } edge from parent[draw=none] 
}
child {node [b] at (0.0, 1.0) {  } edge from parent[draw=none] 
child {node [b] at (0.0, 1.0) {}
child {node [b] at (-0.5, 1.0){}}
child {node [b] at (0.5, 1.0){}}
}
}
;
}}

\newcommand{\twolooptreeLadder}{
\tikz[planar forest ] {

\draw (-1.0,1.0) edge[-] (0.0,1.0);
\draw (-1.0,1.0) arc (90:270:0.5);
\draw (0.0,1.0) arc (90:-90:0.5);
\draw (-1.0,0.0) edge[-] (0.0,0.0);
\node [draw=none] at (0.0, 0.0) { } 
child {node [b] at (-1.0, 1.0) {  } edge from parent[draw=none]
child {node [b] at (0.0, 1.0){}}
}
child {node [b] at (0.0, 1.0) {  } edge from parent[draw=none] 
child {node [b] at (0.0, 1.0) {}
child {node [b] at (0.0, 1.0){}}
}
}
;
}}

For example, one finds:
\begin{align*}
j^{aro}(x_0x_{-1})=\ladrtree, \quad j^{aro}(x_{-1}^2x_{1})=\chetree, \quad j(x_{-1}^2x_0x_{1})=2\chetreeplus + \chetreeY,
\end{align*}
and for aromas,
\begin{align*}
j(x_{-1}^2x_1^2x_0)=2\, \twolooptreeY + 4\,\twolooptreeLadder, \quad j(x_{-1}^2x_1^2)=4\,\twolooptree + 2\,\onelooptreeY.
\end{align*}

\begin{definition}
The symmetry factor of aroma forest is the cardinal of its automorphism group, then for multi-aromas $\mathbf a=(a^1)^{\odot \ell_1}\odot \cdots \odot (a^n)^{\odot \ell_n}$, where $a^1, \ldots, a^n$ are pairwise distinct aromas, the symmetry factor can be expressed by its distinct aroma components
\begin{align*}
\sigma(a) = \ell_1!\cdots \ell_n!\sigma(a^1)^{\ell_1}\cdots \sigma(a^n)^{\ell_n},
\end{align*}
here the symmetry factor $\sigma(a)$ is obtained by external symmetry factor $\sigma^{ext}(\mathbf a):=\ell_1!\cdots\ell_n!$ by the internal symmetry factor $\sigma^{int}(a)=\sigma(a^1)^{\ell_1}\cdots \sigma(a^n)^{\ell_n}$, futhermore for an element $\mathbf a \mathbf \tau \in \AA\FF$ with $\mathbf a=(a^{1})^{\odot \ell_1}\odot \cdots \odot (a^n)^{\ell_n}$ and $\tau=(\tau^1)^{\odot q_1}\odot \cdots \odot (\tau^m)^{\odot q_m}$. The external symmetry factor is $\sigma^{ext}(\mathbf a\tau)=\ell_1!\cdots \ell_n!q_1!\cdots q_m!$ and internal symmetry factor $\sigma^{int}(\mathbf a\tau)=\sigma(a^1)^{\ell_1}\cdots \sigma(a^n)^{\ell_n}\sigma(\tau^1)^{q_1}\cdots \sigma(\tau^m)^{q_m}.$
\end{definition}

The maps $j^{aro}$ and $j^{cl}$ are Hopf embeddings from the multi-indices algebras to their tree counterparts, as shown in the following result, whose proof is postponed to Section \ref{section:Hopf_structure}.
\begin{theorem}
\label{thm:embedding}
The map $j^{aro}$ (respectively $j^{cl}$) yields the following Hopf algebroid embedding (respectively Hopf algebra embedding):
\begin{align*}
    j^{aro}&\colon \HH_{LOT}^{aro}=\big(\AA\MM, \cdot, \Delta_{LOT}^{aro}\big) \rightarrow \HH_{BCK}^{aro}=\big(\AA\FF, \cdot, \Delta_{BCK}^{aro}\big),\\
    j^{cl}&\colon \HH_{LOT}^{cl}=\big(\CC\MM, \cdot, \Delta_{LOT}^{cl}\big) \rightarrow \HH_{BCK}^{cl}=\big(\CC\FF, \cdot, \Delta_{BCK}^{cl}\big).
\end{align*}
In addition, the following diagram commutes.
\[
\begin{tikzcd}
\HH_{LOT}^{aro} \arrow[r, "j^{aro}",hook] \arrow[d,"\varphi^*"' ]& \HH_{BCK}^{aro} \arrow[d,"\psi^*" ]\\
\HH_{LOT}^{cl} \arrow[r, "j^{cl}",hook]  & \HH_{BCK}^{cl}
\end{tikzcd}
\]
\end{theorem}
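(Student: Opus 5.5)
We argue by duality, following the non-aromatic case $j\colon\HH_{LOT}\to\HH_{BCK}$ of \cite{Zhu24fna}. By the definition of $j^{aro}$ and the formula \eqref{formulajaro}, $j^{aro}$ is the adjoint of the fertility map $\Phi$ for the pairings $\langle-,-\rangle$ on $\AA\FF$ and $\AA\MM$, at least on generators. First we extend this adjointness to all of $\AA\MM$. Since both pairings factor through external symmetry factors and $\Phi$ is multiplicative, we check $\langle \Phi(\pi),\M\rangle=\langle \pi, j^{aro}(\M)\rangle$ on products by a counting argument: the multiplicities $q_j!$ and $p_i!$ in $\sigma$ on $\AA\MM$ match the ways of distributing the aromas and trees of $\pi$ among equal factors of $\M$. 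Injectivity of $j^{aro}$ is then immediate. The elements $j^{aro}(x^\kappa)$ and $j^{aro}(x^{\mathbf k})$ are nonzero, have disjoint supports for distinct monomials (every tree in the support has fertility image exactly $x^{\mathbf k}$), and are multiplicatively independent because $\Phi\circ j^{aro}$ is diagonal with positive entries on monomials. Multiplicativity of $j^{aro}$ holds by definition, and $j^{cl}$ is multiplicative by construction.

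The core step is compatibility with coproducts, $\Delta_{BCK}^{aro}\circ j^{aro}=(j^{aro}\otimes j^{aro})\circ\Delta_{LOT}^{aro}$. Dualising, this becomes the statement that $\Phi$ intertwines the Grossman--Larson products: $\Phi(\pi_1\star\pi_2)=\Phi(\pi_1)\star\Phi(\pi_2)$ on $\AA\FF$ and $\AA\MM$. Here we use the theorem of \cite{Bogfjellmo19aso, Bronasco22cef} that $\Delta_{BCK}^{aro}$ is dual to $\star$ up to symmetry factors, together with the analogous duality for $\Delta_{LOT}^{aro}$ as constructed in Section \ref{section:Hopf_structure}. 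Both $\star$ products come from the Guin--Oudom construction applied to the pre-Lie--Rinehart structures. The earlier theorem says that $\Phi\colon\AA\TT\to\AA M_{-1}$ is a pre-Lie--Rinehart morphism, so $\Phi$ respects the grafting $\curvearrowright$ on aromas (via $\rho$) and on trees (via $\nabla$). We then show, by induction on the number of factors in $\pi_1$, that $\Phi$ commutes with the extended grafting $\pi_1\curvearrowright\pi_2$ of a forest onto an aromatic tree or multiaroma. The Guin--Oudom recursion $(\tau\pi)\curvearrowright \mu=\tau\curvearrowright(\pi\curvearrowright\mu)-(\tau\curvearrowright\pi)\curvearrowright\mu$ uses only the pre-Lie product and derivations, and both are preserved. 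The implicit definition of $\star$ then gives $\Phi(\pi_1\star\pi_2)=\Phi(\pi_1)\star\Phi(\pi_2)$.

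This duality step is the main obstacle. The paper defines $\Delta_{LOT}^{aro}$ via free edges and cuts and gives its explicit formula in Theorem \ref{arobck1}, so we must match the pairing conventions. Concretely, we need to check that the coefficients $\kappa!/(\sigma(\cdot)\overline\kappa!)$ and the operator $\overline\partial$ are exactly the transposes of $\star$ under $\sigma$. If this identification is delicate, the fallback is a direct comparison. We apply $\Phi\otimes\Phi$ to $\Delta^{aro}_{BCK}$ of a single tree or aroma, and count the admissible cuts that produce a fixed fertility pattern. Cutting $r$ subtrees with root-fertility data $\mathbf k^i$ lowers the fertility of their parents, and summing over attachment vertices produces exactly $\overline\partial^r x^{\overline{\mathbf k}}$ with the multinomial weight $\mathbf k!/\overline{\mathbf k}!$. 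For aromas, the cycle edges are uncuttable, so the same count applies with $\overline\partial^t$ acting on $x^{\overline\kappa}$, which stays of weight $0$.

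Finally, for $j^{cl}$ and the commuting square, we note that $\psi\circ$(fertility on $\CC\FF$) $=\Phi\circ\psi$ holds trivially from the definitions, i.e. $\varphi\circ\Phi=\Phi\circ\psi$. Dualising gives $\psi^*\circ j^{aro}=j^{cl}\circ\varphi^*$, using the adjointness established above together with the compatibility of $\sigma$ on $\CC\FF$ and $\CC\MM$, which factor in the same way. Then, since $\Delta^{cl}=(\psi^*\otimes\ind)\Delta^{aro}$ and $\Delta^{cl}_{LOT}=(\varphi^*\otimes\ind)\Delta^{aro}_{LOT}$, the coproduct compatibility of $j^{cl}$ follows from that of $j^{aro}$ and the square. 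Injectivity of $j^{cl}$ follows as before.
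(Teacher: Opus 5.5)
\textbf{Gap: the core coproduct step is assumed, not proved.} It is the step you yourself flag as the main obstacle. Dualising turns $\Delta^{aro}_{BCK}\circ j^{aro}=(j^{aro}\otimes j^{aro})\circ\Delta^{aro}_{LOT}$ into ``$\Phi$ intertwines Grossman--Larson products'' only if you already know one fact. You need the specific coproduct $\Delta^{aro}_{LOT}$ of Theorem~\ref{arobck1} (Definition~\ref{admcutmonomial}) to be the transpose, for $\langle-,-\rangle$, of a Guin--Oudom product $\star$ on $\AA\MM$. Section~\ref{section:Hopf_structure} contains no such duality. There, $\Delta^{aro}_{LOT}$ is defined by cuts of monomials, and its only link to trees is Lemma~\ref{aroj}. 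That lemma is precisely the compatibility with $j^{aro}$ you want, so appealing to it would be circular. Everything else in your argument is formal: the adjointness, the GL-morphism property of $\Phi$, and the BCK/GL duality on forests. So this identification carries the whole content of the theorem, and you leave it unchecked.

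\textbf{The fallback fails.} The quantity $(\Phi\otimes\Phi)\circ\Delta^{aro}_{BCK}(t)$ is not a function of $\Phi(t)$. Take the tree $t_1$ whose root carries a leaf and a vertex with one leaf, and the tree $t_2$ made of a root, one child, and two leaves on that child. Both have fertility $x_{-1}^2x_0x_1$, yet only $t_2$ produces the term $x_{-1}^2x_1\otimes x_{-1}$. So no tree-by-tree count of attachment vertices yields $\overline{\partial}^r x^{\overline{\mathbf k}}$. Moreover, an identity that holds only after applying $\Phi\otimes\Phi$, which has a large kernel, cannot give the required equality in $\AA\FF\otimes\AA\FF$. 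What must be shown is that $\Delta^{aro}_{BCK}(j^{aro}(x^{\mathbf k}))$ lies in the image of $j^{aro}\otimes j^{aro}$ with the right coefficients.

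\textbf{How the paper closes it.} Lemma~\ref{aroj} proves this directly. Corollary~\ref{deltabar-j} gives $j(\overline{\partial}^r x^{\overline\kappa})=\overline{\delta}^r j(x^{\overline\kappa})$, so the right-hand factor is $j$ of a full trunk with its free edges erased. Graftings of $A$ on $\overline a$ are then traded for admissible cuts by orbit--stabiliser counting (\eqref{numgrafting} and Proposition~\ref{propgraftcut}).

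\textbf{How your route could be completed.} Since $P\rhd Q=P\,\partial Q$ with $\partial$ a derivation, the Guin--Oudom recursion gives $(P_1\odot\cdots\odot P_r)\curvearrowright Q=P_1\cdots P_r\,\partial^rQ$ for $P_i\in M_{-1}$ and $Q\in M_{-1}$ or $Q\in M_0$. Moreover $\langle x^{\mathbf m}R,x^{\mathbf k}\rangle=\frac{\mathbf k!}{(\mathbf k-\mathbf m)!}\langle R,x^{\mathbf k-\mathbf m}\rangle$, and $\partial$ transposes to $\overline{\partial}$. Hence the transpose $\Delta'$ of $\star$ on a generator is exactly the formula of Theorem~\ref{arobck1}. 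Multiplicativity of $\Delta'$ then follows from $(j\otimes j)\circ\Delta'=\Delta^{aro}_{BCK}\circ j$ together with injectivity of $j\otimes j$.

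\textbf{What is sound.} The rest of your proposal works and is in places cleaner than the paper. The multinomial extension of adjointness to all of $\AA\MM$ is valid, since both symmetry factors split as external times internal. Injectivity follows because $\Phi\circ j^{aro}$ is diagonal with positive entries. Obtaining the commuting square by transposing $\varphi\circ\Phi=\Phi\circ\psi$ avoids the explicit computation of Lemma~\ref{lemma:compatibility_j_maps}. Your final step for $j^{cl}$ is the same as the paper's.
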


\subsection{Aromatic multi-indices for numerical volume-preservation}
\label{sec:num}

The design of numerical integrators that preserve volume is strongly linked to the characterisation of the kernel of the divergence map $\Ker(d)$ on aromatic trees $\AA T$ (see Definition \ref{def:div}) via backward error analysis \cite{Hairer06gni}.
The characterisation of $\Ker(d)$ is described in \cite{Laurent23tab, Laurent23tld, Dotsenko24vpo} with the so-called aromatic bicomplex. The resulting conditions for volume-preservation are numerous and challenging, so that one would be interested in a simpler case to find insight on the form of a volume-preserving aromatic B-series method.
For instance, the paper \cite{Bogfjellmo22uat} focuses on polynomial vector fields.
One could tackle the problem in dimension one, which would yield (standard) multi-indices, but numerical volume-preservation becomes a trivial problem and so does the algebra. We also recall the result from \cite{Bruned25edf} that states that there is no intermediate algebraic formalism between multi-indices (one dimensional) and Butcher trees (infinite dimensional).
We observe in this subsection that aromatic multi-indices provide an intermediate between multi-indices and aromatic trees for formulating a simplified but non-trivial set of necessary conditions for volume-preservation.

\begin{definition}
Let the divergence of multi-indices $d\colon \AA M_{-1}\rightarrow \AA$ be given by
\[
d\big(x^{\kappa^1}\odot\cdots \odot x^{\kappa^m}\odot x^{\mathbf k}\big)
=x^{\kappa^1}\odot\cdots \odot x^{\kappa^m}\odot \partial x^{\mathbf k}
+\sum_{i=1}^m x^{\kappa^1}\odot\cdots \odot (x^{\mathbf k}\rhd x^{\kappa^i}) \odot\cdots \odot x^{\kappa^m}.
\]
\end{definition}

The divergence on aromatic trees and aromatic multi-indices relate through the fertility map, as given by the following result whose proof is very similar to the one of Proposition \ref{prop:fertility_compatibility}.
\begin{lemma}
The divergence and the fertility map commute:
\[d\circ \Phi=\Phi\circ d.\]
\end{lemma}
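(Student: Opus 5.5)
We prove $d\circ\Phi=\Phi\circ d$ on $\AA T$ by reducing both sides to local computations at vertices. Both sides are linear, so it is enough to check the identity on a single aromatic tree $\pi=a^1\cdots a^m\,t$ with $t\in T$ and $a^i\in A_0$. The divergence on aromatic trees is $d=t\circ\delta$ with $\delta=\sum_v\delta_v$, where $v$ runs over all vertices of $\pi$. So $d(\pi)$ is a sum over vertices $v$ of $\pi$ of the multiaroma obtained by adding an edge from the root $r$ of $t$ to $v$. We split this sum into two parts: vertices $v\in V(t)$, and vertices $v\in V(a^i)$ for some $i$.

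\textbf{Effect of one new edge on $\Phi$.} Adding an edge from $r$ to $v$ changes no colour. It raises the fertility of $v$ by one and leaves all other fertilities unchanged; the root acquires an outgoing edge, but fertility counts only incoming edges. In terms of monomials, the factor $x^{d(v)}_{f(v)-1}$ is replaced by $x^{d(v)}_{f(v)}$.

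\textbf{Vertices in $t$.} For $v\in V(t)$, the new edge closes a cycle through $r$ and $v$. The result is a single aroma $a_v$, multiplied by the untouched $a^1\cdots a^m$. Hence
\[\Phi(a_v)=x^{\Phi(t)-\mathbf e^{d(v)}_{f(v)-1}+\mathbf e^{d(v)}_{f(v)}}.\]
Summing over $v\in V(t)$ and grouping vertices with the same colour and fertility, the multiplicities are exactly the exponents $k_j^a$ of $\Phi(t)=x^{\mathbf k}$. By \eqref{partialderivation}, this sum equals $\partial x^{\mathbf k}$, which gives the term $\Phi(a^1)\odot\cdots\odot\Phi(a^m)\odot\partial\Phi(t)$.

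\textbf{Vertices in an aroma.} For $v\in V(a^i)$, the tree $t$ is grafted onto $a^i$ at $v$, giving a new aroma. This is exactly the term of the grafting $t\curvearrowright a^i$ at $v$, and the other aromas are untouched. Summing over $v\in V(a^i)$ gives
\[a^1\cdots (t\curvearrowright a^i)\cdots a^m.\]
The pre-Lie morphism property of $\Phi$ already used in the preceding theorem, $\Phi(t\curvearrowright a)=\Phi(t)\rhd\Phi(a)=\Phi(t)\cdot\partial\Phi(a)$, can be rechecked by the same fertility count. It yields $\Phi(a^1)\odot\cdots\odot(x^{\mathbf k}\rhd\Phi(a^i))\odot\cdots\odot\Phi(a^m)$. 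Adding the two parts and using that $\Phi$ is multiplicative on multiaromas gives precisely the definition of $d$ on $\AA M_{-1}$ applied to $\Phi(\pi)$.

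\textbf{Main obstacle.} The only delicate point is the bookkeeping of the first part. We must check that the root's own monomial factor is treated consistently: when $v=r$, the loop at the root raises its fertility, so $x_{f(r)-1}\mapsto x_{f(r)}$, which is still the $\partial$ rule. We must also check that no symmetry factors intervene, since $d$ is defined as a plain sum over vertices, and $\partial$ and $\rhd$ are plain derivations with integer multiplicities $k_j^a$. Once the vertex-by-vertex correspondence is written down, both sides are seen to be sums over the same index set, with the same monomials.
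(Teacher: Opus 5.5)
Your proof is correct and follows essentially the route the paper indicates: the paper only says the argument is similar to that of Proposition~\ref{prop:fertility_compatibility}, i.e.\ it combines $d=t\circ\delta$ with the fertility identity $\overline{\Phi}\circ\delta=\partial\circ\overline{\Phi}$, while you carry out the same vertex-by-vertex fertility count directly on the new edge instead of passing through free edges and the trace. Your split into tree vertices (giving $\partial\Phi(t)$) and aroma vertices (giving $\Phi(t)\rhd\Phi(a^i)$) is exactly the multi-index counterpart of the trace, and you are right that no symmetry factors enter.
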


The search for volume-preserving integrators in dimension one not only is a trivial problem in numerics, but also fully trivialises the algebraic conditions for volume-preservation, as explained in the following result.
\begin{proposition}
Let $\pi\colon \AA M_{-1}\rightarrow M_{-1}$ be the projection
\[
\pi\big(x^{\kappa^1}\odot\cdots \odot x^{\kappa^m}\odot x^{\mathbf k}\big)
=x^{\kappa^1}\cdots x^{\kappa^m}\cdot x^{\mathbf k}.
\]
Then, the kernel of the divergence is trivial when rewritten with standard multi-indices:
\[\pi\circ \Phi(\Ker(d))=0.\]
\end{proposition}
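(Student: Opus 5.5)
The plan is to show that $\pi$ intertwines the aromatic divergence with the ordinary derivation $\partial$ on $\overline{N}(C)$, that is,
\[\pi\circ d=\partial\circ\pi \quad\text{on } \AA M_{-1},\]
and then to use that $\partial$ is injective on the relevant graded pieces. Here $d$ is the multi-index divergence, and $\Ker(d)$ is read on the multi-index side via the lemma $d\circ\Phi=\Phi\circ d$: if $\tau\in\AA T$ satisfies $d\tau=0$, then $d\Phi(\tau)=\Phi(d\tau)=0$. It therefore suffices to prove that $d(X)=0$ for $X\in\AA M_{-1}$ forces $\pi(X)=0$. The whole argument happens inside the commutative polynomial algebra $\overline{N}(C)$, where $\pi$ simply multiplies out all the $\odot$ factors.

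\textbf{Step 1: $\pi\circ d=\partial\circ\pi$.} Take a monomial $X=x^{\kappa^1}\odot\cdots\odot x^{\kappa^m}\odot x^{\mathbf k}$. By definition of $d$ and of $\rhd$ on $M_0$,
\[\pi(dX)=x^{\kappa^1}\cdots x^{\kappa^m}\,\partial x^{\mathbf k}+\sum_i x^{\kappa^1}\cdots (x^{\mathbf k}\partial x^{\kappa^i})\cdots x^{\kappa^m}.\]
Since $\partial$ is a derivation of $\overline{N}(C)$, this is exactly $\partial(x^{\kappa^1}\cdots x^{\kappa^m}x^{\mathbf k})=\partial\pi(X)$. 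This is a direct Leibniz-rule check, and $\pi$ is clearly linear.

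\textbf{Step 2: $\partial$ is injective on $M_{-1}$.} More precisely, I will show that $\partial$ is injective on the span of monomials of weight $-1$, which contains $\pi(\AA M_{-1})$ since aromas have weight $0$. Suppose $\partial P=0$ with $P$ a nonzero polynomial. Order the variables by their index $j$, and pick a monomial of $P$ that is maximal for a suitable order, for instance lexicographic by the exponent vector read from the highest index downward. Its image under $\partial$ contains the term obtained by raising its top variable $x_{j}^a$ to $x_{j+1}^a$. That term cannot be cancelled by the image of any other monomial of $P$, because any other monomial would need the same multiset after a single index raise, which contradicts maximality.

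The alternative, cleaner route is the standard fact that $\ker\partial$ on $\overline{N}(C)$ consists of the constants. This holds because $\partial$ is a locally nilpotent-free "shift" derivation: its kernel is generated in degree $0$ by the constants, which can be seen via the grading by $\sum_j (j+2)k_j$, which $\partial$ raises by exactly one. Since constants have weight $0\neq -1$, we get $\ker\partial\cap(\text{weight }-1)=0$.

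\textbf{Conclusion.} If $d(X)=0$ then $\partial\pi(X)=\pi(dX)=0$, so $\pi(X)=0$. Applied to $X=\Phi(\tau)$ with $\tau\in\Ker(d)$, this gives $\pi\circ\Phi(\Ker d)=0$.

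The main obstacle is Step 2, namely making the leading-term argument rigorous. A monomial such as $x_{j}x_{j+1}^{p}$ maps to several terms, so the maximal-term cancellation must be checked carefully. I would first try the order "largest $j$ with $k_j\neq 0$, then larger $k_j$" and verify that the term raising the top variable to $x_{j+1}$ has the strictly largest new top index among all terms of $\partial P$, and hence survives.
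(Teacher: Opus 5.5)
Your argument is correct, and it takes a genuinely different route from the paper.

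\emph{The paper's route.} It works on the tree side. It invokes the characterisation from \cite{Laurent23tab} of $\Ker(d)\subset\AA T$ as generated by the exterior derivatives $\omega$ of aromatic $2$-forms $a\tau_1\wedge\tau_2$. It then checks that $\pi\circ\Phi$ kills each of the three brackets of $\omega$ separately. This uses $\Phi(\tau\curvearrowright a)=\Phi(\tau)\,\partial\Phi(a)$, $\Phi(d\tau)=\partial\Phi(\tau)$, and commutativity of the polynomial product.

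\emph{Your route.} You move the problem to $\AA M_{-1}$ via the lemma $d\circ\Phi=\Phi\circ d$. You observe that multiplying out turns the aromatic divergence into the derivation, $\pi\circ d=\partial\circ\pi$, with $\pi$ extended to $S(M_0)$ by multiplication. You then conclude from the injectivity of $\partial$ in weight $-1$.

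\emph{Comparison.} Your approach does not need the nontrivial exactness result behind the paper's generators. It also proves a stronger statement: $\pi(\Ker d)=0$ for the divergence on $\AA M_{-1}$ itself, a space that contains $\Phi(\Ker d)$. In exchange, the paper's route displays the explicit cancellation on each generator.

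\emph{Step 2.} Rely on the top-variable argument, not the grading remark. A derivation homogeneous of degree one can have a nonconstant kernel; for example, $x\,\partial_y$ on $\K[x,y]$ with $\deg x=2$, $\deg y=1$ kills $x$. So the grading remark proves nothing by itself. The clean version is as follows. Let $P$ be nonconstant and let $x_n^a$ be a variable of maximal index occurring in $P$. Then $x_{n+1}^a$ occurs in $\partial P$ only through $x_{n+1}^a\,\partial P/\partial x_n^a$, which is nonzero in characteristic zero. Hence $\ker\partial$ consists of the constants, which have weight $0\neq -1$.
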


\begin{proof}
Following \cite{Laurent23tab}, $\Ker(d)$ is exactly generated by the elements of the following form (id est, the exterior derivative of aromatic 2-forms $a\tau_1\wedge \tau_2$),
\[
\omega=\Big((\tau_2\curvearrowright a) \tau_1-(\tau_1\curvearrowright a) \tau_2\Big)
+a\Big(\tau_2\curvearrowright\tau_1-(d\tau_1)\tau_2\Big)
-a\Big(\tau_1\curvearrowright\tau_2-(d\tau_2)\tau_1\Big).
\]
The map $\pi\circ\Phi$ vanishes on each of the three terms in $\omega$.
\end{proof}

The space of aromatic multi-indices brings an intermediate object which is simpler than aromatic trees and where much of the conditions for volume-preservation are retained.
A weaker formulation of volume preservation then is to find numerical methods whose modified vector field writes as a formal series indexed by $\omega \in \AA T$ (or its completion to be exact) and such that $\Phi(\omega)\in\Ker(d)$.
We present the first generators of $\Ker(d)$ for aromatic trees and aromatic multi-indices in Table \ref{table:Ker_d} in order to show how richer the structure of aromatic multi-indices is compared to standard multi-indices.
We emphasize that $\Phi$ is not injective on $\Ker(d)$, so that one would indeed obtain less restrictive conditions for volume-preservation using aromatic multi-indices.

\begin{figure}[ht]
\begin{longtable}{|C|C|C|}
\hline
\omega\in \Ker(d) & \Phi(\omega) \\\hline
\forestQE+\forestRE-\forestSE-\forestTE
& x_{-1}x_1\odot x_{-1}+x_0^2\odot x_{-1}-x_{-1}^2x_1-x_0\odot x_{-1}x_0\\
\hline
\forestUE+\forestVE+\forestWE
& 2x_{-1}x_0 x_1\odot x_{-1}+x_0^3\odot x_{-1}\\
-\forestXE-\forestYE-\forestAF
& -2x_{-1}^2x_0x_1-x_0\odot x_{-1}x_0^2\\
\hline
\forestBF+2\forestCF+\forestDF
& x_{-1}^2 x_2\odot x_{-1}+2x_{-1}x_0 x_1\odot x_{-1}\\
-2\forestEF-\forestFF-\forestGF
&-x_{-1}^2x_0x_1-x_{-1}^3 x_2-x_0\odot x_{-1}^2x_1\\
\hline
\forestHF+\forestIF+\forestJF
& x_0\odot x_{-1}x_1\odot x_{-1}+x_0\odot x_0^2\odot x_{-1}+x_{-1}x_0x_1\odot x_{-1}\\
-\forestKF-\forestLF-\forestMF
& -x_0\odot x_{-1}^2x_1-x_0\odot x_0\odot x_{-1}x_0-x_{-1}x_1\odot x_{-1}x_0\\
\hline
\caption{Generators of the kernel of the divergence map in the context of aromatic trees and their associated aromatic multi-indices for order up to four.}
\label{table:Ker_d}
\end{longtable}
\end{figure}

\section{Free edges, graftings, cuts, and Hopf embeddings}
\label{section:Hopf_structure}

This section defines the necessary concepts (free edges, graftings,...) and use them for proving the Hopf embeddings of Theorem \ref{arobck1} and the explicit description of the aromatic LOT coproduct.

\subsection{Free edges and extended maps}

Let us extend the aromatic trees and aromatic multi-indices to include free edges and higher weight.
\begin{definition}
Let $\overline{A_0}$ and $\overline{T}$ be the spaces of aromas and trees with free edges given by
\[\overline{A_0}=\bigoplus_{n=0}^{\infty}A_0^{(n)}, \quad \overline{T} = \bigoplus_{n}^{\infty} T^{(n)},\]
where $A_0^{(n)}$ and $T^{(n)}$ are the spaces of aromas with $n$ free edges and space of rooted trees with $n$ free edges. Then we define the space of aromatic trees with free edges by
\[\overline{\AA T}=S(\overline{A_0})\otimes \overline{T}.\]
The weight is extended as the total number of the edges minus the number of vertices for tree-like structures.
Analogously, let the aromatic multi-indices with higher weight,
\[\overline{M_{-1}}=\bigoplus_{n=0}^{\infty}M_{-1}^{(n)}, \quad \overline{M_0}=\bigoplus_{n=0}^{\infty}M_0^{(n)},\]
where $M_p^{(n)}\simeq M_{n+p}$ is the space of weight $n+p$ with respect to the original space $M_p$ and $p=0,1$. Furthermore, we define the aromatic Novikov algebra with higher weight:
\[\overline{\AA M_{-1}}=S(\overline{M_0})\otimes \overline{M_{-1}}.\]
\end{definition}

\newcommand{\onearomaFreeex}{
\tikz[planar forest ] {

\draw (-0.5, 4.0) edge[-] (-1.0, 3.0);
\draw (0.5, 3.0) edge[-] (0.0, 2.0);
\draw (-1.0,1.0) arc (90:270:0.5);
\draw (-1.0,1.0) arc (90:-90:0.5);
\node [draw=none] at (0.0, 0.0) { } 
child {node [b] at (-1.0, 1.0) {  } edge from parent[draw=none] 
child {node [b] at (1.0, 1.0) {}}
child {node [b] at (0.0, 1.0) {}
child {node [b] at (0.0, 1.0){}}
}
}
;
}}

\newcommand{\onearomarootedFreeex}{
\tikz[planar forest ] {

\draw (0.5, 3.0) edge[-] (0.0, 2.0);
\draw (1.5, 2.0) edge[-] (1.0, 1.0);
\draw (-0.5, 1.0) edge[-] (0.0, 0.0);
\node [draw=none] at (0.0, 0.0) { } ;
\node [b] at (0.0, 0.0) {}
child {node [b] at (0.0, 1.0) {}
child {node [b] at (0.0, 1.0) {}}
}
child {node [b] at (1.0, 1.0) {}}
;
}}

\begin{ex}
The monomials $x_0^3x_2\in \overline{M_0}$ and $x_0^3x_2\in \overline{M_{-1}}$ respectively correspond to the following aroma and tree with free edges:
\[\onearomaFreeex, \quad \onearomarootedFreeex.\]
We emphasize that the monomials are enforced to be different as they belong to two different spaces.
\end{ex}

% The algebra of clumped multi-indices $(S(\AA M_{-1}), \diamond)$ is embedded into the wider symmetric algebra of clumped monomials $(\overline{\CC\MM}=S(\overline{\AA M_{-1}}),\diamond)$, and similarly for the clumped forests with free edges, denoted by $\overline{\CC\FF}$.
% The clumped forests with free edges will be denoted by $\overline{\CC\FF}=S(\overline{\AA T})$.
% The weight $\mathbf {wt}(\overline{a\tau})$ of $\overline{a\tau}\in \overline{\CC\FF} $ is given by the total number of the edges minus the number of vertices.
% The pairing extends straightforwardly to the $\overline{\CC\FF}$, where the symmetry factor of a forest with free edges is understood as the symmetry factor of the corresponding $\CC\times \mathbb N_0$-decorated forest.

% \modz{For the clumped multi-indices $(S(\AA M_{-1}),\diamond)$ is a  Hopf algebra. really Hopf algebra?} $(S(\overline{\AA M_{-1}}),\diamond)$ collects the elements with weight bigger than $-1$, which is analogous to the clumped forests with free edges. Compared with the clumped forest, it can be seen as the symmetric algebra generated by $\CC\times \mathbb N_0$. 

As $\partial\colon M_n\rightarrow M_{n+1}$, the derivation induces naturally the maps
\[
\partial\colon \overline{M_0}\rightarrow \overline{M_0},\quad
\partial\colon \overline{M_{-1}}\rightarrow \overline{M_{-1}}.
\]
The derivation $\partial$ is extended on $\overline{\AA M_{-1}}$ as a derivation:
\[
\partial(x^{\kappa^1}\odot \cdots \odot x^{\kappa^m} \odot x^{\mathbf k})
=\sum_{j=1}^m x^{\kappa^1}\odot \cdots \odot  \partial x^{\kappa^j} \odot \cdots \odot x^{\kappa^m}\odot x^{\mathbf k}
+ x^{\kappa^1}\odot \cdots \odot x^{\kappa^m} \odot \partial x^{\mathbf k}.
\]
Analogously to \eqref{partialderivation}, let the transpose derivation on $M_n$:
\[
    \overline{\partial} x^{\mathbf k}=\sum_{j\geq 0, a\in C} k_j^a x^{\mathbf k - \mathbf e_j^a + \mathbf e_{j-1}^a}.
\]
While $\partial$ increases the weight, $\overline{\partial}$ decreases it.
We extend $\overline{\partial}$ to $\overline{M_0}$, $\overline{M_{-1}}$, and $\overline{\AA M_{-1}}$ as before.

\begin{proposition}
The transpose derivation $\overline{\partial}$ is the transpose of the derivation $\partial$, that is, for $P,Q\in \overline{M_0}$ or $P,Q\in \overline{M_{-1}}$,
\[
\langle \partial P, Q\rangle=\langle P , \overline{\partial} Q \rangle.
\]
\end{proposition}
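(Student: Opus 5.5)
By bilinearity, it suffices to check the identity on basis monomials. Take $P=x^{\mathbf k}$ and $Q=x^{\mathbf m}$. In $\overline{M_0}$ and $\overline{M_{-1}}$ the pairing is $\langle x^{\mathbf k},x^{\mathbf m}\rangle=\mathbf k!\,\ind_{\mathbf k=\mathbf m}$, where $\mathbf k!=\prod_{j,a}k_j^a!$; this is the symmetry factor of a single monomial.

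Expanding with \eqref{partialderivation} gives
\[
\langle \partial x^{\mathbf k}, x^{\mathbf m}\rangle=\sum_{j\geq -1,a\in C}k_j^a\,\langle x^{\mathbf k-\mathbf e_j^a+\mathbf e_{j+1}^a},x^{\mathbf m}\rangle .
\]
Since $\mathbf k-\mathbf e_j^a+\mathbf e_{j+1}^a$ determines $(j,a)$ once $\mathbf k$ and $\mathbf m$ are fixed with $\mathbf m\neq\mathbf k$, at most one term survives. This is the term with $\mathbf m=\mathbf k-\mathbf e_j^a+\mathbf e_{j+1}^a$, and its value is $k_j^a\,\mathbf m!$.

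Symmetrically, from the definition of $\overline{\partial}$,
\[
\langle x^{\mathbf k},\overline{\partial}x^{\mathbf m}\rangle=\sum_{i\geq 0,b}m_i^b\,\langle x^{\mathbf k},x^{\mathbf m-\mathbf e_i^b+\mathbf e_{i-1}^b}\rangle .
\]
The only possibly nonzero term is $(i,b)=(j+1,a)$, for the same $\mathbf m=\mathbf k-\mathbf e_j^a+\mathbf e_{j+1}^a$, and its value is $m_{j+1}^a\,\mathbf k!$.

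It remains to compare $k_j^a\,\mathbf m!$ with $m_{j+1}^a\,\mathbf k!$. The multi-indices $\mathbf m$ and $\mathbf k$ agree except at $(j,a)$, where $m_j^a=k_j^a-1$, and at $(j+1,a)$, where $m_{j+1}^a=k_{j+1}^a+1$. Hence
\[
\mathbf m!\,k_j^a=\frac{\mathbf k!}{k_j^a}(k_{j+1}^a+1)\,k_j^a=\mathbf k!\,m_{j+1}^a ,
\]
which is exactly the required equality. When no such $(j,a)$ exists, both sides vanish.

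The one point needing care is the bookkeeping of index ranges. The operator $\partial$ acts on indices $j\geq -1$, while $\overline{\partial}$ acts on indices $i\geq 0$ and lowers them to $i-1\geq -1$, so the two index sets correspond bijectively under $i=j+1$. I would also note explicitly that $\partial$ and $\overline{\partial}$ preserve the spaces $\overline{M_0}$ and $\overline{M_{-1}}$: both maps preserve polynomial degree, and since the pairing vanishes between distinct monomials, mixed weights cause no issue. The main obstacle is therefore only to fix the pairing normalisation $\sigma(x^{\mathbf k})=\mathbf k!$ on a single monomial, as given by the symmetry factor formula, and to check that the same normalisation is used on both sides of the identity.
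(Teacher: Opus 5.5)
Your proof is correct and follows essentially the paper's own argument: expand both sides on basis monomials, note that only the term with $\mathbf m=\mathbf k-\mathbf e_j^a+\mathbf e_{j+1}^a$ survives, and match the two coefficients through the factorial identity (the paper writes both as $(\mathbf k+\mathbf e_{j+1}^a)!$). There are three minor differences. You treat $\overline{M_0}$ and $\overline{M_{-1}}$ uniformly, whereas the paper computes only on $\overline{M_0}$ and cites earlier work for $\overline{M_{-1}}$. Your index ranges ($j\geq -1$ for $\partial$, $i\geq 0$ for $\overline{\partial}$) follow the definitions more faithfully than the paper's $t\geq 0$, $t\geq 1$. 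Finally, contrary to your remark, $\overline{\partial}$ does not literally preserve $\overline{M_0}$ or $\overline{M_{-1}}$, since it lowers the weight out of the bottom component; this is harmless, because your monomial computation never uses that claim.
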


\begin{proof}
We already have the result on $\overline{M_{-1}}$ from \cite{Zhu24fna}.
Then, for $x^{\mu}, x^{\nu}\in \overline{M_0}$, we have
\begin{align*}
\langle \partial x^{\mu}, x^{\nu}\rangle & =  \sum_{t\geq 0, a\in C} \mu_i^a\langle x^{\mu - e_t^a + e_{t+1}^a}, x^{\nu} \rangle\\
& = \sum_{t\geq 0, a\in C} \mu_i^a \nu!\ind_{(\mu - e_t^a + e_{t+1}^a=\nu)}\\
& = \sum_{t\geq 0, a\in C}(\nu + e_t^a)!\ind_{(\mu - e_t^a + e_{t+1}^a=\nu)}.
\end{align*}
On the other hand, we find
\begin{align*}
\langle x^{\mu} , \overline{\partial} x^{\nu}\rangle & = \sum_{t\geq 1, a\in C}\nu_t^a\langle x^{\mu}, x^{\nu - e_t^a + e_{t-1}^a}\rangle\\
& = \sum_{t\geq 1, a\in C} \nu_t^a (\nu - e_t^a + e_{t-1}^a)!\ind_{(\nu - e_t^a + e_{t-1}^a = \mu)}\\
& = \sum_{t\geq 1, a\in C} (\nu + e_{t-1}^a)!\ind_{(\nu - e_t^a + e_{t-1}^a=\mu)}.
\end{align*}
Hence the result.
\end{proof}

An explicit expression for the iterated transpose derivation is given by the following result.
\begin{proposition}
\label{prop:formula_partial_bar}
For any multi-indices $\kappa, \mathbf k$, denote their corresponding left-shifted multi-indices by $\shift{\kappa},\shift{\mathbf k}$ with coordinates $\shift{\kappa_t^a}:=\kappa_{t+1}^a$.
% In particular, we have
% \[\shift{\mathbf e_t^a}=\mathbf e_{t-1}^a.\]
For any integer $r\geq 0$, the following formula holds
% \begin{equation}\label{dr}
\[
    \overline{\partial}^r( \mathbf y\odot  x^{\mathbf k})=\sum_{\vert\ell^1\vert +\cdots + \vert\ell^n\vert + \vert\ell\vert=r}C_{\kappa^1, \ell^1}\cdots C_{\kappa^n, \ell^n}C_{ k, \ell} x^{\kappa^1 - \ell^1 + \shift{\ell^1}}\odot \cdots\odot   x^{\kappa^n - \ell^n + \shift{\ell^n}}\odot  x^{\mathbf k - \ell + \shift{\ell}},
\]
% \end{equation}
where $\mathbf y=x^{\kappa_1}\odot \cdots \odot x^{\kappa_n}$ and the coefficient $C_{\kappa^i, \ell^i}, C_{\mathbf k, \ell}$ are given by $C_{\kappa^i, 0}=C_{\mathbf k, 0}=0$ for $1 \le i\le n$ and the recursive formula below:
\begin{align*}
    C_{\mathbf k, \ell} & = \sum_{t\geq -1, a\in C, \ell_t\geq 1}C_{\mathbf k, \ell -\mathbf e_t^a}(\mathbf k_t^a - \ell_t + 1 + \ell_{t+1}^a)\\
C_{\kappa^i, \ell^i} & = \sum_{t'\geq 0, a\in C, \ell^i_{t'}\geq 1} C_{ \kappa^i, \ell^i-\mathbf e_{t'}^a}(\kappa^{i,a}_{t'} - \ell^i_{t'} + 1 + \ell^{i,a}_{t'+1})
\end{align*}
\end{proposition}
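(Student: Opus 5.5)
The plan is to reduce the statement to the single-factor case and then argue by induction on $r$. Since $\overline{\partial}$ is extended to $\overline{\AA M_{-1}}$ as a derivation for $\odot$, the iterated operator obeys the multinomial Leibniz rule
\[
\overline{\partial}^r(x^{\kappa^1}\odot\cdots\odot x^{\kappa^n}\odot x^{\mathbf k})=\sum_{r_1+\cdots+r_n+r_0=r}\frac{r!}{r_1!\cdots r_n!\,r_0!}\,\overline{\partial}^{r_1}x^{\kappa^1}\odot\cdots\odot\overline{\partial}^{r_n}x^{\kappa^n}\odot\overline{\partial}^{r_0}x^{\mathbf k}.
\]
It therefore suffices to prove a one-factor formula
\[
\overline{\partial}^s x^{\mathbf k}=\sum_{|\ell|=s}C_{\mathbf k,\ell}\,x^{\mathbf k-\ell+\shift{\ell}},
\]
and similarly for $x^{\kappa}\in\overline{M_0}$, where $\ell$ ranges over the admissible region. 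For $\overline{M_{-1}}$ this region is $t\ge 0$, and for $\overline{M_0}$ it is $t'\geq 0$, matching the fact that $\overline{\partial}$ acts only on indices $j\ge 0$. The multinomial factor is then either absorbed into the $C$'s or, more naturally, disappears once the coefficients are read as counting ordered sequences of elementary lowerings. I would check this bookkeeping on a small example first, since the statement sets $C_{\cdot,0}=0$, which looks like a typo for $1$.

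For the one-factor formula, I would interpret $C_{\mathbf k,\ell}$ as the number of ordered ways of applying $s$ elementary moves $\mathbf e_t^a\mapsto \mathbf e_{t-1}^a$, weighted by multiplicities, so that the net effect is the multiset of moves recorded by $\ell$. Here $\ell_t^a$ is the number of moves that lower a variable from level $t$ to level $t-1$, and the total change is $-\ell+\shift{\ell}$. I then induct on $s$: write $\overline{\partial}^{s}=\overline{\partial}\circ\overline{\partial}^{s-1}$ and apply \eqref{partialderivation} transposed, namely $\overline{\partial}x^{\mathbf m}=\sum_{t,a} m_t^a x^{\mathbf m-\mathbf e_t^a+\mathbf e_{t-1}^a}$, to each term $x^{\mathbf k-\ell'+\shift{\ell'}}$ with $|\ell'|=s-1$. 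The exponent at level $t$ of that monomial is $k_t^a-\ell'^a_t+\ell'^a_{t+1}$. Lowering once from level $t$ gives $\ell=\ell'+\mathbf e_t^a$, so $\ell'=\ell-\mathbf e_t^a$ and the multiplicity equals $k_t^a-\ell_t^a+1+\ell_{t+1}^a$. Collecting terms by the final $\ell$ gives exactly the stated recursion
\[
C_{\mathbf k,\ell}=\sum_{t,a,\ \ell_t^a\ge1}C_{\mathbf k,\ell-\mathbf e_t^a}\,(k_t^a-\ell_t^a+1+\ell_{t+1}^a).
\]
The same computation works verbatim for $\kappa\in\overline{M_0}$, with $t'\ge0$.

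The main obstacle is the reassembly step. I need to verify that the distinct tuples $(\ell^1,\ldots,\ell^n,\ell)$ are in bijection with the terms of the Leibniz expansion, and that the multinomial coefficients are exactly accounted for, that is, that they are not double counted by the $C$'s.

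The cleanest way I see to handle this is to avoid the Leibniz expansion entirely and run the induction directly on the whole product. Applying one $\overline{\partial}$ hits exactly one factor, which increments one $\ell^i$ (or $\ell$) by a single $\mathbf e_t^a$. The recursion for the joint coefficient then factorises, because each $C$ counts interleavings only within its own factor. The cross-factor interleavings are where the multinomial factor $r!/\prod r_i!$ arises, and the proof must state explicitly whether this factor is included in the formula or absorbed into the coefficients.
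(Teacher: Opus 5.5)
Your one-factor induction is correct: the multiplicity $k_t^a-\ell_t^a+1+\ell_{t+1}^a$ is right, and so are your corrections $C_{\cdot,0}=1$ and $t\geq 0$ in both recursions. The gap is the step you defer. It is not bookkeeping: it decides whether the statement holds, and as printed it does not.

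Each $C_{\kappa^i,\ell^i}$ is fixed by a recursion internal to the $i$-th factor. It therefore counts the weighted orderings of the lowerings inside that factor only. It cannot absorb the interleavings of lowerings between different factors, and these interleavings do not disappear.

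Your fallback, inducting on the whole product and asserting that the joint recursion factorises, fails at exactly this point. Fix a target tuple $(\ell^1,\dots,\ell^n,\ell)$. Every nonzero block can be the one hit last. After applying that block's recursion, each such block contributes the full product $C_{\kappa^1,\ell^1}\cdots C_{\kappa^n,\ell^n}C_{\mathbf k,\ell}$. So even granting the induction hypothesis, you get this product multiplied by the number of nonzero blocks.

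The paper's own proof is this same induction and makes the same slip. Its final re-indexed sum contains one such contribution per nonzero block, yet it is read off as a single product before ``Hence the result''. Concretely, $\overline{\partial}^2(x_1\odot x_0)=x_{-1}\odot x_0+2\,x_0\odot x_{-1}$. The only tuple producing $x_0\odot x_{-1}$ is $(\ell^1,\ell)=(\mathbf e_1,\mathbf e_0)$, with coefficient $C_{\mathbf e_1,\mathbf e_1}C_{\mathbf e_0,\mathbf e_0}=1$.

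So you should commit to your first route and correct the statement. The identity that holds is
\[
\overline{\partial}^r(\mathbf y\odot x^{\mathbf k})=\sum_{|\ell^1|+\cdots+|\ell^n|+|\ell|=r}\frac{r!}{|\ell^1|!\cdots|\ell^n|!\,|\ell|!}\,C_{\kappa^1,\ell^1}\cdots C_{\kappa^n,\ell^n}C_{\mathbf k,\ell}\,x^{\kappa^1-\ell^1+\shift{\ell^1}}\odot\cdots\odot x^{\kappa^n-\ell^n+\shift{\ell^n}}\odot x^{\mathbf k-\ell+\shift{\ell}}.
\]
For this identity your argument is already complete. The multinomial Leibniz expansion is indexed by compositions $r=r_1+\cdots+r_n+r_0$ and, inside each, by tuples with $|\ell^i|=r_i$ and $|\ell|=r_0$. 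That is exactly the set of tuples of total size $r$, so nothing is double counted.

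The whole-product induction also closes once the factor is included. Let $j$ run over all $n+1$ blocks. The predecessor obtained by lowering block $b$ carries $r!\,|\ell^b|/\prod_j|\ell^j|!$, and $\sum_b|\ell^b|=r+1$, which gives $(r+1)!/\prod_j|\ell^j|!$.
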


\begin{proof}
The definition of $\overline{\partial}$ immediately yields
\[
C_{\mathbf k, \mathbf e_t^a} = \mathbf k_t^a,\quad
C_{\mathbf \kappa^i, \mathbf e_{t'}^a} = \kappa^{i,a}_{t'}.
\]
We therefore obtain
\begin{align*}
\overline{\partial}^{r+1}&( \mathbf  y\odot  x^{\mathbf k})
  =\overline{\partial}\hskip-6mm\sum_{\vert \ell^{1'}\vert + \cdots + \vert \ell^{n'}\vert + \vert \ell'\vert=r} C_{\kappa^1, \ell^{1'}}\cdots C_{\kappa^n, \ell^{n'}}C_{\mathbf k, \ell} x^{\kappa^1 - \ell^{1'} + \shift{\ell^{1'}}}\odot \cdots\odot x^{\kappa^n - \ell^{n'} + \shift{\ell^{n'}}}\odot x^{\mathbf k - \ell + \shift{\ell}}\\
 & =\hskip-6mm\sum_{\vert \ell^{1'}\vert + \cdots + \vert \ell^{n'}\vert + \vert \ell'\vert=r}\sum_{\ell^{i'} + \mathbf e_{t'}^a = \ell^i }C_{\kappa^1, \ell^{1'}} x^{\kappa^1 - \ell^1 + \shift{\ell^1}}\odot \cdots \odot  C_{\kappa^i, \ell^{i'} }(\kappa_{t'}^a + \ell_{t'+1}^{i,a} - \ell_{t'}^{i',a}) \\
 & \odot  x^{\kappa^i + \shift{(\ell^{i'}+ \mathbf e_{t'}^{a}) } - (\ell^{i'} + \mathbf e_{t'}^a)}\odot \cdots \odot C_{\kappa^n, \ell^n} x^{\kappa^n - \ell^n + \shift{\ell^n}}\odot    x^{\mathbf k - \ell + \shift{\ell}}\\
 & +\sum_{\vert \ell^{1'}\vert + \cdots + \vert \ell^{n'}\vert + \vert \ell'\vert=r}\sum_{\ell' + \mathbf e_t^a =\ell }C_{\kappa^1, \ell^1}\cdots C_{\kappa^n, \ell^n} x^{\kappa^1 - \ell^1 + \shift{\ell^1}}\odot  \cdots\odot  x^{\kappa^n - \ell^n + \shift{\ell^n}}\\
 & \odot C_{\mathbf k, \ell'}(k_t^a + \ell_{t+1}^a - \ell^{'a}_t)x^{\mathbf k + \shift{(\ell' + \mathbf e_t^a)} - (\ell' + \mathbf e_t^a)}\\
 & =\hskip -6mm \sum_{\vert \ell^{1}\vert + \cdots + \vert \ell^{n}\vert + \vert \ell\vert=r + 1}\sum_{(\ell^{i'}, t', a), \ell^{i'} + \mathbf e_{t'}^a=\ell^i}C_{\kappa^1, \ell^1} x^{\kappa^1 + \shift{\ell^1} - \ell^1}\odot \cdots\odot C_{\kappa^i, \ell^i - \mathbf e_{t'}^a}(\kappa_{t'}^a + \ell_{t'+1}^a - \ell_{t'}^a + 1) \\
 & \odot x^{\kappa^i + \shift{\ell^i} - \ell^i}\odot \cdots \odot C_{\kappa^n, \ell^n} x^{\kappa^n + \shift{\ell^n} - \ell^n}\odot C_{\mathbf k, \ell} x^{\mathbf k + \shift{\ell} - \ell}\\
 & + \hskip-6mm\sum_{\vert \ell^{1}\vert + \cdots + \vert \ell^{n}\vert + \vert \ell\vert=r + 1}\sum_{(\ell', t, a), \ell' + \mathbf e_t^a=\ell} C_{\kappa^1, \ell^1}\cdots C_{\kappa^n, \ell^n} x^{\kappa^1 - \ell^1 + \shift{\ell^1}}\odot \cdots\odot  x^{\kappa^n - \ell^n + \shift{\ell^n}}\\
 &   \odot C_{\mathbf k, \ell - \mathbf e_t^a}(\mathbf k_t^a + \ell_{t+1}^a - \ell_t^a + 1) x^{\mathbf k + \shift{\ell} - \ell}
\end{align*}
Hence the result.
\end{proof}

The following result is a universal combinatorial property applied in our context of multi-indices and tree-like structures.
\begin{proposition}\label{monomial and forest}
Let $\M=M_1\diamond \cdots \diamond M_r\in \CC\MM$ and $\F\in \CC\FF$ such that $\Phi(\mathbb F)=\mathbb M$.
% $(M_1,\cdots, M_r)$ be a $r$-tuple of monomials in $M_0$ or $M_{-1}$ and let the clumped monomial $\mathbb M=M_1\odot \cdots \odot M_r$ obtained by multiplying the $M_j$'s together. Let $\mathbb F$ be a clumped forest such that $\Phi(\mathbb F)=\mathbb M$ and let 
Let $\mathcal B$ be the set of $r$-tuples of aromatic trees given by
\[\mathcal B:= \{(\tau_1,\cdots,\tau_r), \tau_1\cdots \tau_r=\mathbb F \ and \ \Phi(M_j)=\tau_j \ for \ any \ j=1,\cdots,r\}.\]
Then, one finds
\[\vert \mathcal B\vert=\frac{\sigma^{ext}(\mathbb M)}{\sigma^{ext}(\mathbb F)}.\]
\end{proposition}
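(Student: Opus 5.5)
The plan is a direct multinomial count, organised by the fibres of $\Phi$. First I would fix the intended reading of the statement: $\mathcal B$ is the set of ordered $r$-tuples $(\tau_1,\dots,\tau_r)$ of aromatic trees in $\AA T$ whose symmetric (clumped) product is $\F$ and which satisfy $\Phi(\tau_j)=M_j$ for every $j$. The condition ``$\Phi(M_j)=\tau_j$'' in the statement must be a typo for this, since $\Phi$ goes from trees to monomials. Since $\CC\FF=S(\AA T)$ and $\CC\MM=S(\AA M_{-1})$ are symmetric algebras, I would write
\[
\F=t_1^{q_1}\cdots t_s^{q_s}
\]
with pairwise distinct $t_i\in\AA T$, so that $\sigma^{ext}(\F)=q_1!\cdots q_s!$. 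Similarly, I would group the factors of $\M$ as $m_1^{\diamond p_1}\diamond\cdots\diamond m_u^{\diamond p_u}$ with pairwise distinct $m_k\in\AA M_{-1}$, so that $\sigma^{ext}(\M)=p_1!\cdots p_u!$.

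The key structural step uses that $\Phi$ is defined on $\CC\FF$ as a $\diamond$-morphism and sends each aromatic tree to a single monomial of $\AA M_{-1}$. Hence $\Phi(\F)=\M$ is an identity between monomials of the free commutative algebra $S(\AA M_{-1})$. It says exactly that the multiset $\{\Phi(t_i)\ (q_i\text{ times})\}$ coincides with the multiset $\{m_k\ (p_k\text{ times})\}$. Consequently, for every $k$,
\[
p_k=\sum_{i\,:\,\Phi(t_i)=m_k} q_i .
\]
This partitions the distinct trees $t_i$ according to their image under $\Phi$.

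Next comes the count itself. An element of $\mathcal B$ is an assignment of the multiset of trees of $\F$ to the positions $1,\dots,r$, subject to $\Phi(\tau_j)=M_j$. For fixed $k$, the $p_k$ positions $j$ with $M_j=m_k$ must therefore receive exactly the trees $t_i$ with $\Phi(t_i)=m_k$, each $t_i$ used $q_i$ times. By the identity above these tree counts add up to precisely $p_k$, so every tree of $\F$ is used exactly once overall. The choices for different $k$ are independent, and ordered tuples with repeated entries are counted by the multinomial coefficient. Thus
\[
\vert\mathcal B\vert=\prod_{k=1}^u \frac{p_k!}{\prod_{i:\Phi(t_i)=m_k} q_i!}
=\frac{\prod_k p_k!}{\prod_i q_i!}=\frac{\sigma^{ext}(\M)}{\sigma^{ext}(\F)}.
\]

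The only delicate point, which I regard as the main obstacle, is the multiset bookkeeping in the structural step. One must make sure that ``$\tau_1\cdots\tau_r=\F$'' is read in $S(\AA T)$, so that equality is equality of multisets of clumps and the multiaromas stay attached to their trees. One must also check that the external factor only involves multiplicities of whole clumps, not of aromas inside a clump. With the definitions of $\sigma^{ext}$ on $\CC\FF$ and $\CC\MM$ given earlier, both facts hold, and the computation above goes through verbatim.
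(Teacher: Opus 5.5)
Your proof is correct and is essentially the paper's argument written out explicitly. The paper lets the external automorphism group $\prod_k S_{p_k}$ of $\mathbb M$ act transitively on $\mathcal B$, with stabiliser of order $\prod_i q_i!=\sigma^{ext}(\mathbb F)$, and applies orbit--stabiliser; your fibre-by-fibre multinomial coefficients $p_k!/\prod_{i:\Phi(t_i)=m_k}q_i!$ are exactly that orbit count. Your reading of the typo as $\Phi(\tau_j)=M_j$ is the intended one.
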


\begin{proof}
The external automorphism group of $\mathbb M$ acts transitively on $\mathcal B$. The stabiliser of the $r$-tuple $(t_1,\cdots, t_r)$ is the external automorphism group of the forest $F$. One concludes by the orbit-stabiliser theorem.
\end{proof}

Let $\delta: \overline{\CC\FF}\rightarrow \overline{\CC\FF}$ be given by
\[\delta(a\tau)= \sum_{v\in V(a\tau)}\delta_v(a\tau),\]
where $\delta_v$ adds a free edge to vertex $v$.
Analogously, let $\overline{\delta}$ be
\[\overline{\delta}(a\tau)= \sum_{v\in V(a\tau)}\overline{\delta}_v(a\tau).\]
where $\overline{\delta}_v$ removes a free edge at vertex $v$, and returns zero if $v$ has no free edge.

\begin{definition}
Let $\overline{\Phi}$ be the extension of the fertility map $\Phi$ on $\overline{\AA\FF}$ defined as
\begin{align*}
\Phi: \overline{\AA\FF} & \rightarrow \overline{\AA\MM}\\
\overline{a^1}\cdots \overline{a^n}\,\overline{t}& \mapsto \overline{\Phi}(\overline{a^1})\odot \cdots \odot \Phi(\overline{a^n})\odot \overline{\Phi}(\overline{t})
\end{align*}
where
\begin{align*}
\overline{\Phi}(\overline{t})=\prod_{v\in V(\overline{t})}x_{f(v) - 1}^{d(v)}, \quad \overline{\Phi}(\overline{a^j})=\prod_{v\in V(a^j)}x_{f(v)-1}^{d(v)}. 
\end{align*}
Analogously, formula \eqref{formulajaro} allows to extend the map $j^{aro}$ into
\begin{align*}
\overline{j^{aro}}: \overline{\AA\MM} \rightarrow \overline{\AA\FF},
\end{align*}
where the pairing is extended naturally to $\overline{\AA\FF}$, where the symmetry factor of aromatic forest with free edges is the symmetry factor of the corresponding $\CC\times \mathbb N_0$ decorated aromatic forest.
\end{definition}

\newcommand{\threearomafree}{
\tikz[planar forest ] {

\draw (-1.0,1.0) edge[-] (0.0,1.0);
\draw (0.0, 2.0) edge[-] (0.0, 1.0);
\draw (0.0,1.0) edge[-] (1.5,1.0);
\draw (-1.0,1.0) arc (90:270:0.5);
\draw (1.5,1.0) arc (90:-90:0.5);
\draw (-1.0, 0.0) edge[-] (1.5, 0.0);
\draw (1.5, 2.0) edge[-] (1.5, 1.0);
\draw (-1.0, 2.0) edge[-] (-1.0, 1.0);
\node [draw=none] at (0.0, 0.0) { } 
child {node [b] at (-1.0, 1.0) {  } edge from parent[draw=none] 
}
child {node [b] at (0.0, 1.0) {  } edge from parent[draw=none] 
}
child {node [b] at (1.5, 1.0) {  } edge from parent[draw=none] 
}
;
}}

\newcommand{\twoaromafree}{
\tikz[planar forest ] {

\draw (-1.0,1.0) edge[-] (0.0,1.0);
\draw (-1.0, 0.0) edge[-] (0.0, 0.0);
\draw (-0.5, 3.0) edge[-] (0.0, 2.0);
\draw (0.5, 3.0) edge[-] (0.0, 2.0);
\draw (-1.0,1.0) arc (90:270:0.5);
\draw (0.0,1.0) arc (90:-90:0.5);
\draw (-1.0, 2.0) edge[-] (-1.0, 1.0);
\node [draw=none] at (0.0, 0.0) { } 
child {node [b] at (-1.0, 1.0) {  } edge from parent[draw=none] 
}
child {node [b] at (0.0, 1.0) {  } edge from parent[draw=none] 
child {node [b] at (0.0, 1.0) {  } }
}
;
}}

\newcommand{\onearomafree}{
\tikz[planar forest ] {

\draw (0.0, 3.0) edge[-] (-1.0, 2.0);
\draw (-0.5, 4.0) edge[-] (-1.0, 3.0);
\draw (-1.5, 4.0) edge[-] (-1.0, 3.0);
\draw (-1.0,1.0) arc (90:270:0.5);
\draw (-1.0,1.0) arc (90:-90:0.5);
\node [draw=none] at (0.0, 0.0) { } 
child {node [b] at (-1.0, 1.0) {  } edge from parent[draw=none] 
child {node [b] at (0.0, 1.0) {}
child {node [b] at (0.0, 1.0){}}
}
}
;
}}

\newcommand{\threearomafreePhi}{
\tikz[planar forest ] {

\draw (-1.0,1.0) edge[-] (0.0,1.0);
\draw (0.0, 2.0) edge[-] (0.0, 1.0);
\draw (0.0,1.0) edge[-] (1.5,1.0);
\draw (-1.0,1.0) arc (90:270:0.5);
\draw (1.5,1.0) arc (90:-90:0.5);
\draw (-1.0, 0.0) edge[-] (1.5, 0.0);
\draw (-1.0, 2.0) edge[-] (-1.0, 1.0);
\draw (2.0, 3.0) edge[-] (1.5, 2.0);
\node [draw=none] at (0.0, 0.0) { } 
child {node [b] at (-1.0, 1.0) {  } edge from parent[draw=none] 
}
child {node [b] at (0.0, 1.0) {  } edge from parent[draw=none] 
}
child {node [b] at (1.5, 1.0) {  } edge from parent[draw=none] 
child {node [b] at (0.0, 1.0) {  }}
}
;
}}

\newcommand{\twoaromaFree}{
\tikz[planar forest ] {

\draw (-1.0,1.0) edge[-] (0.0,1.0);
\draw (-1.0, 0.0) edge[-] (0.0, 0.0);
\draw (0.5, 3.0) edge[-] (0.5, 2.0);
\draw (-1.0,1.0) arc (90:270:0.5);
\draw (0.0,1.0) arc (90:-90:0.5);
\draw (-0.5, 3.0) edge[-] (-0.5, 2.0);
\node [draw=none] at (0.0, 0.0) { } 
child {node [b] at (-1.0, 1.0) {  } edge from parent[draw=none] 
}
child {node [b] at (0.0, 1.0) {  } edge from parent[draw=none] 
child {node [b] at (-0.5, 1.0) {  } }
child {node [b] at (0.5, 1.0) {}}
}
;
}}

\newcommand{\threearomaFree}{
\tikz[planar forest ] {

\draw (-1.0,1.0) edge[-] (0.0,1.0);
\draw (0.0,1.0) edge[-] (1.5,1.0);
\draw (-1.0,1.0) arc (90:270:0.5);
\draw (1.5,1.0) arc (90:-90:0.5);
\draw (-1.0, 0.0) edge[-] (1.5, 0.0);
\draw (2.0, 3.0) edge[-] (1.5, 2.0);
\draw (2.0, 2.0) edge[-] (1.5, 1.0);
\node [draw=none] at (0.0, 0.0) { } 
child {node [b] at (-1.0, 1.0) {  } edge from parent[draw=none] 
}
child {node [b] at (0.0, 1.0) {  } edge from parent[draw=none] 
}
child {node [b] at (1.5, 1.0) {  } edge from parent[draw=none] 
child {node [b] at (0.0, 1.0) {} }
}
;
}}

\newcommand{\onearomaFree}{
\tikz[planar forest ] {

\draw (-0.5, 4.0) edge[-] (-1.0, 3.0);
\draw (0.5, 3.0) edge[-] (0.0, 2.0);
\draw (-1.0,1.0) arc (90:270:0.5);
\draw (-1.0,1.0) arc (90:-90:0.5);
\node [draw=none] at (0.0, 0.0) { } 
child {node [b] at (-1.0, 1.0) {  } edge from parent[draw=none] 
child {node [b] at (1.0, 1.0) {}}
child {node [b] at (0.0, 1.0) {}
child {node [b] at (0.0, 1.0){}}
}
}
;
}}

\newcommand{\aromatreefree}{
\tikz[planar forest ] {

\draw (-1.0,1.0) edge[-] (0.0,1.0);
\draw (0.0, 2.0) edge[-] (0.0, 1.0);
\draw (0.0,1.0) edge[-] (1.5,1.0);
\draw (-1.0,1.0) arc (90:270:0.5);
\draw (1.5,1.0) arc (90:-90:0.5);
\draw (-1.0,0.0) edge[-] (1.5,0.0);
\draw (1.0, 2.0) edge[-] (1.5, 1.0);
\draw (2.0, 2.0) edge[-] (1.5, 1.0);
\node [draw=none] at (0.0, 0.0) { } 
child {node [b] at (-1.0, 1.0) {  } edge from parent[draw=none] 
}
child {node [b] at (0.0, 1.0) {  } edge from parent[draw=none] 
}
child {node [b] at (1.5, 1.0) {  } edge from parent[draw=none] 
}
;
%\draw (-0.5, 1.5) -- (0.5, 1.5);
}}

\begin{ex}
One finds
\begin{align*}
\overline{j^{aro}}(x_1^3) & =6\,\threearomafree + 6\,\twoaromafree + 6\,\onearomafree, \quad \overline{j^{aro}}(x_0^3x_2)=3\, \twoaromaFree + 6\,\threearomaFree + 6\, \onearomaFree,\\
\overline{\Phi}(\threearomafree) & = x_1^3, \quad
\overline{\Phi}(\threearomafreePhi) = x_0x_1^3, \quad \overline{\Phi}(\threearomaFree)=x_0^3x_2, \quad \overline{\Phi}(\aromatreefree)=x_2x_1x_0. \\
\end{align*}
\end{ex}

\begin{proposition}
\label{prop:fertility_compatibility}
The map $\delta$ is the transpose of $\overline{\delta}$, that is, for $\tau^1,\tau^2\in \overline{A_0}$ or $\tau^1,\tau^2\in \overline{T}$,
\[
\langle \delta \tau^1, \tau^2\rangle=\langle \tau^1 , \overline{\delta} \tau^2 \rangle.
\]
Moreover, $\partial$ and $\delta$ satisfy the following identity on $\overline{A_0}$ and $\overline{T}$:
\[
\overline{\Phi}\circ \delta = \partial \circ \overline{\Phi}.
\]
\end{proposition}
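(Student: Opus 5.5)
The proof splits into two parts: the adjointness $\langle \delta \tau^1,\tau^2\rangle=\langle \tau^1,\overline{\delta}\tau^2\rangle$, and the intertwining identity $\overline{\Phi}\circ\delta=\partial\circ\overline{\Phi}$. Both are checked on basis elements, that is, on a single aroma or tree with free edges. They then extend to $\overline{A_0}$ and $\overline{T}$ by linearity.

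For the adjointness, fix basis elements $\tau^1,\tau^2$ and regard them as $C\times\mathbb N_0$-decorated graphs, the second label recording the number of free edges at each vertex. Then $\langle\delta\tau^1,\tau^2\rangle=\sigma(\tau^2)\,N_1$, where $N_1$ is the number of vertices $v$ of $\tau^1$ with $\delta_v\tau^1\cong\tau^2$. Likewise $\langle\tau^1,\overline{\delta}\tau^2\rangle=\sigma(\tau^1)\,N_2$, where $N_2$ is the number of vertices $w$ of $\tau^2$ carrying a free edge with $\overline{\delta}_w\tau^2\cong\tau^1$. I would compare the two by counting the set of pairs $(v,w)$ with $v\in V(\tau^1)$, $w\in V(\tau^2)$, together with an isomorphism $\delta_v\tau^1\to\tau^2$ sending $v$ to $w$. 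Counting through $\tau^1$ gives $N_1\,\sigma(\tau^2)$; counting through $\tau^2$ gives $N_2\,\sigma(\tau^1)$, since an isomorphism $\tau^1\to\overline{\delta}_w\tau^2$ is the same data as one $\delta_v\tau^1\to\tau^2$ mapping $v\mapsto w$. This is the orbit–stabiliser argument already used in Proposition~\ref{monomial and forest}. The step needing care is checking that automorphisms of decorated graphs with free edges correspond exactly under adding or removing a free edge at a matched vertex. For aromas one must additionally check that the cyclic symmetry is respected, which holds because free edges are vertex labels and do not alter the cycle structure.

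For the identity $\overline{\Phi}\circ\delta=\partial\circ\overline{\Phi}$, take a basis element $\tau$ with $\overline{\Phi}(\tau)=\prod_{v}x^{d(v)}_{f(v)-1}$, where the fertility $f(v)$ counts both children and free edges. Adding a free edge at $v$ raises $f(v)$ by one, so
\[
\overline{\Phi}(\delta_v\tau)=\overline{\Phi}(\tau)\,\frac{x^{d(v)}_{f(v)}}{x^{d(v)}_{f(v)-1}}.
\]
Summing over $v$ and grouping vertices by their pair $(d(v),f(v)-1)=(a,j)$ gives $\sum_{a,j}k_j^a\, x^{\mathbf k-\mathbf e_j^a+\mathbf e_{j+1}^a}$. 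By \eqref{partialderivation} this is exactly $\partial x^{\mathbf k}$. The computation is the same on $\overline{A_0}$, where every vertex, including those on the cycle, contributes $x_{f(v)-1}$.

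I expect the main obstacle to be the symmetry-factor bookkeeping in the first part: the pairing must be taken with the $C\times\mathbb N_0$-decorated symmetry factor for the double counting to close. The second part is a direct computation.
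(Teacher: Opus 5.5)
Your proof is correct. For the second identity it supplies what the paper omits: the paper's proof treats only the adjointness. Your vertex-by-vertex computation is exactly the missing argument: adding a free edge at $v$ turns $x^{d(v)}_{f(v)-1}$ into $x^{d(v)}_{f(v)}$, and grouping vertices by $(d(v),f(v)-1)$ gives \eqref{partialderivation}.

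For the adjointness your route differs from the paper's. The paper applies orbit--stabiliser separately to $P=\{v:\delta_v a\cong b\}$ and $Q=\{w:\overline{\delta}_w b\cong a\}$. It asserts $P\simeq \mathrm{Aut}(a)/\mathrm{Aut}^v(a)$ and $Q\simeq \mathrm{Aut}(b)/\mathrm{Aut}^w(b)$, then identifies the two stabilisers. You instead double-count pairs $(v,\phi)$ with $\phi\colon \delta_v\tau^1\to\tau^2$ an isomorphism. The bijection $(v,\phi)\mapsto(\phi(v),\phi)$ sends these onto pairs $(w,\psi)$ with $\psi\colon\tau^1\to\overline{\delta}_w\tau^2$.

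Your version never needs $P$ or $Q$ to be a single orbit, and that is a genuine gain for aromas, where transitivity can fail. Take $a$ to be the directed $3$-cycle $p_1\to p_2\to p_3\to p_1$ with one free edge at $p_2$. Then $\mathrm{Aut}(a)$ is trivial. Yet the rotation $p_1\mapsto p_2\mapsto p_3\mapsto p_1$ is an isomorphism $\delta_{p_1}a\to\delta_{p_3}a$. So $|P|=2$, while $|\mathrm{Aut}(a)/\mathrm{Aut}^v(a)|=1$.

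The identity $\sigma(b)|P|=\sigma(a)|Q|$ still holds there ($2=2$). This is because the $\mathrm{Aut}(a)$-orbits in $P$ and the $\mathrm{Aut}(b)$-orbits in $Q$ correspond bijectively, so the paper's computation is valid orbit by orbit. Your double count gets this for free.

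For the same reason, do not describe your argument as ``the orbit--stabiliser argument already used in Proposition~\ref{monomial and forest}''. That argument relies on a transitive action, and the point of yours is precisely that it avoids one.
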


\begin{proof}
Choosing a vertex $v$ (resp. $w$) in an aroma $a$ (resp. $b$), and considering the sets
\[P:= \{ v\in V(a), \delta_{v}(a)= b\}, \quad Q:= \{ w \in V(b), \overline{\delta_{w}}(b)=a\},\]
we find by the orbit-stabiliser theorem
\[ P \simeq Aut(a)/ Aut^v(a), \quad Q\simeq Aut(b)/ Aut^w(b).\]
where $v$ (resp. $w$) is any choice of element in $P$ (resp. $Q$), and where $Aut^v(a)$ is the stabiliser of $v$ in $Aut(a)$ (and similarly for $Q$). If two trees $a$ and $b$ are such that $b= \delta_v(a)$ for some $v\in V(a)$, then both sets of vertices $V(a)$ and $V(b)$ can be naturally identified. In that case, from the natural isomorphism $Aut^v(a)\simeq Aut^v(b)$, we obtain
\[\sigma(b)\vert P\vert = \sigma(a)\vert Q\vert.\]
We therefore have
\begin{align*}
    \langle \delta a, b\rangle
    &  = \vert P\vert \sigma(b)\\
& = \vert Q\vert \sigma(a)\\
& = \langle a, \overline{\delta} b\rangle.
\end{align*}
which proves the first assertion on $\overline{A_0}$. The proof is similar on $\overline{T}$.
\end{proof}

The transpose of the fertility identity in Proposition \ref{prop:fertility_compatibility} yields the following.
\begin{corollary}\label{deltabar-j}
The following identity holds on $\overline{M_0}$ and $\overline{M_{-1}}$:
\[
% \label{deltabar-j}
\overline{\delta}\circ \overline{j^{aro}}=\overline{j^{aro}}\circ\overline{\partial}.
\]
\end{corollary}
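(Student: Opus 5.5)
We take transposes of the identity $\overline{\Phi}\circ \delta = \partial \circ \overline{\Phi}$ from Proposition \ref{prop:fertility_compatibility}, using that $\overline{j^{aro}}$ is by construction the transpose of $\overline{\Phi}$ on $\overline{A_0}$ and $\overline{T}$, i.e. $\langle \overline{\Phi}(\tau), P\rangle=\langle \tau, \overline{j^{aro}}(P)\rangle$ for $\tau\in\overline{A_0}$ (resp. $\overline{T}$) and $P\in\overline{M_0}$ (resp. $\overline{M_{-1}}$). This duality follows from formula \eqref{formulajaro} extended to free edges: both sides equal $\sigma(P)\ind_{\overline{\Phi}(\tau)=P}$. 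We check this first, treating free edges as an extra $\mathbb N_0$-decoration, which is how the symmetry factors were extended.

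Fix $P\in \overline{M_0}$ and an arbitrary aroma with free edges $\tau\in\overline{A_0}$; the case of $\overline{M_{-1}}$ and $\overline{T}$ is identical. We compute, using in turn the transpose property of $\delta$ and $\overline{\delta}$ (first part of Proposition \ref{prop:fertility_compatibility}), the duality above, the fertility identity, and the transpose property of $\partial$ and $\overline{\partial}$ proved earlier:
\begin{align*}
\langle \tau, \overline{\delta}\,\overline{j^{aro}}(P)\rangle
&=\langle \delta\tau, \overline{j^{aro}}(P)\rangle
=\langle \overline{\Phi}(\delta\tau), P\rangle
=\langle \partial\overline{\Phi}(\tau), P\rangle\\
&=\langle \overline{\Phi}(\tau), \overline{\partial}P\rangle
=\langle \tau, \overline{j^{aro}}(\overline{\partial}P)\rangle.
\end{align*}
Since the pairing is nondegenerate on each graded component (the basis of aromas/trees with free edges is orthogonal with nonzero weights $\sigma$), the two elements $\overline{\delta}\,\overline{j^{aro}}(P)$ and $\overline{j^{aro}}(\overline{\partial}P)$ coincide.

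The main point needing care is the setup. First, $\delta$ and $\overline\delta$ preserve the spaces $\overline{A_0}$ and $\overline{T}$, since adding or removing a free edge does not change whether a graph is an aroma or a tree, so all pairings are taken within one space. Second, the extended pairing must be the one with respect to which both the $\delta/\overline\delta$ and $\partial/\overline\partial$ transposition statements hold. Both were proved with symmetry factors of $\CC\times\mathbb N_0$-decorated objects and $\nu!$ respectively, so this is consistent. Third, gradings must be finite-dimensional so that transposes make sense: we work degree by degree in the number of vertices, which all maps preserve.
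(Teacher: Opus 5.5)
Your proposal is correct and is the paper's argument. The paper obtains the corollary in one line by transposing the fertility identity $\overline{\Phi}\circ\delta=\partial\circ\overline{\Phi}$; you spell this out as a chain of pairings, using that $\overline{\delta}$, $\overline{\partial}$ and $\overline{j^{aro}}$ are the transposes of $\delta$, $\partial$ and $\overline{\Phi}$. One small correction: once free edges are allowed, the components with a fixed number of vertices are not finite-dimensional, since the number of free edges is unbounded (one could bigrade by vertices and free edges instead). Your last step does not need finite dimension, though, only that the basis is orthogonal with nonzero norms, so the argument is unaffected.
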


\begin{proposition}\label{delete-free-edges}
let $a\tau$ be an aroma tree with $r$ free edges. Let $r_v$ be the number of the free edges at the vertex $v$ of $a\tau$. Then we have
\[\overline{\delta}^r(a\tau)=\frac{r!}{\prod_{v\in \mathcal V(a\tau)}r_v!}a_0\tau_0,\]
where $a_0\tau_0$ is the aromatic tree with all free edges removed. For aromas only, one finds
\[\overline{\delta}^r(a)=\frac{r!}{\prod_{v\in \mathcal V(a)}r_v!}a_0.\]
\end{proposition}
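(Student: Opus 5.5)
We prove the statement by induction on $r$, using only the definition of $\overline{\delta}=\sum_v\overline{\delta}_v$ as a sum of local removal operators. The key observation is that the operators $\overline{\delta}_v$ for different vertices commute. Each one only changes the free-edge count at its own vertex, and it returns zero exactly when that count is zero. Therefore
\[
\overline{\delta}^r=\Big(\sum_{v}\overline{\delta}_v\Big)^r=\sum_{\substack{(s_v)_v,\ \sum_v s_v=r}}\frac{r!}{\prod_v s_v!}\prod_v\overline{\delta}_v^{s_v}
\]
by the multinomial theorem for commuting operators. Here the vertex set is fixed along the process, since removing free edges never changes the underlying vertices, and we identify the vertices of $a\tau$ with those of each intermediate object.

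Next we evaluate each term. The iterate $\overline{\delta}_v^{s_v}$ applied to an object with $r_v$ free edges at $v$ removes $s_v$ of them if $s_v\le r_v$, and gives zero otherwise. We must fix the normalisation here: $\overline{\delta}_v$ removes \emph{a} free edge, and free edges at a single vertex are indistinguishable in the $\CC\times\mathbb N_0$-decorated picture. So $\overline{\delta}_v$ simply decrements the decoration $r_v$ by one, with coefficient $1$. This convention is what makes $\overline{\delta}$ the transpose of $\delta$ in Proposition \ref{prop:fertility_compatibility}. Since $\sum_v s_v=r=\sum_v r_v$ and $s_v\le r_v$ for every $v$, the only nonzero term is $s_v=r_v$ for all $v$. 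That term yields $a_0\tau_0$ with coefficient $r!/\prod_v r_v!$, which is the claim.

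Alternatively, and as a check, we can run a direct induction on $r$. Write $\overline{\delta}(a\tau)=\sum_{v:\,r_v\ge1}(a\tau)_{v}$, where $(a\tau)_v$ has $r_v-1$ free edges at $v$. The induction hypothesis gives
\[
\overline{\delta}^{r}(a\tau)=\sum_{v:\,r_v\geq 1}\frac{(r-1)!}{(r_v-1)!\prod_{w\neq v}r_w!}\,a_0\tau_0
=\frac{(r-1)!}{\prod_w r_w!}\Big(\sum_v r_v\Big)a_0\tau_0
=\frac{r!}{\prod_w r_w!}\,a_0\tau_0 .
\]
The aroma-only case is the same computation restricted to $V(a)$.

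The main obstacle is not the combinatorics but the bookkeeping of identifications. The sum over $v\in V(a\tau)$ runs over actual vertices and not over isomorphism classes. Consequently, two vertices exchanged by an automorphism contribute separately and both land on the same isomorphism class. That is exactly what the count requires, provided we consistently work with labelled vertices before passing to isomorphism classes. I would state this convention explicitly at the start so that the multinomial expansion is legitimate.
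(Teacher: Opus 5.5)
Your proof is correct and follows the paper's argument. The paper writes $\overline{\delta}^r(a\tau)$ as a sum over $r$-tuples $(v_1,\dots,v_r)$ of vertices and notes that only the tuples in which each $v$ occurs exactly $r_v$ times survive, each giving $a_0\tau_0$ with coefficient one; this is your multinomial expansion with $s_v=r_v$. Your explicit statement of the coefficient-one normalisation of $\overline{\delta}_v$ and your inductive cross-check make precise points the paper leaves implicit.
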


\begin{proof}
We have 
\[\overline{\delta}^r(a\tau)= \sum_{(v_1,\ldots, v_r)\in \mathcal V(a\tau)^r}\overline{\delta}_{v_1}\circ \cdots \circ \overline{\delta}_{v_r}(a\tau).\]
A term in the right-hand side is equal to $a_0$ if and only if each vertex $v\in \mathcal V(a\tau)$ appears exactly $r_v$ times in the tuple $(v_1,\ldots, v_r)$, otherwise the term is equal to zero. The number of such $r$-tuples is equal to the multi-monomial coefficient above.
\end{proof}

For any admissible cut $c$ of an aromatic tree $a\tau\in \AA T$, let $\overline{R}_{c}(a\tau)$ be the associated full trunk, which is given by the trunk $R_c(a\tau)$ together with each cut edges replaced by a free edge. The weight of the full trunk is equal to the number of edges belonging to the cut.
In view of Proposition \ref{delete-free-edges}, we have
\begin{equation}
\label{fulltrunk}
\overline{\delta}^r\big(\overline{R}^c(a\tau)\big)=\|\overline{a}\, \overline{\tau}\|R^c(a\tau), \quad
\|\overline{a}\, \overline{\tau}\|:= \frac{r!}{\prod_{v\in \mathcal V(\overline{a}\, \overline{\tau})}r_v!},
\end{equation}
where $\overline{a}\, \overline{\tau}$ is the shorthand for $\overline{R}^c(a\tau)$.

\subsection{Combinatorics of the graftings and cuts}

\begin{definition}\label{defngrafting}
 Let $r$ be a positive integer, let $A$ be an aromatic forest with $r$ connected components without free edges, and let $\overline{a}$ be an aromatic tree with $r$ free edges. A grafting of $A$ on $\overline{a}$ consists in a a given way to attach the $r$ roots in $A$ to the $r$ free edges in $\overline{a}$, removing the free edges in the operation. The set of all possible graftings of $A$ on $\overline{a}$ is denoted $\mathcal G(A,\overline{a})$ and satisfies 
 \begin{equation}\label{numgrafting}
 \vert \mathcal G(A,\overline{a})\vert= \|\overline{a}\|.   
 \end{equation}
 For a vertex $v\in \mathcal V(\overline{a})$ and a grafting $b\in \mathcal G(A, \overline{a})$, we denote by $F_b(v)$ the subforest of $A$ attached to $v$ via $b$.
\end{definition}

\newcommand{\forestBCKexf}{
\tikz[planar forest ] {

\node [b] at (0.0, 0.0) {  } 
child {node [b] at (-0.5, 1.0) {  }  
}
child {node [b] at (0.5, 1.0) {  }  
child {node [b] at (0.0, 1.0) {  }  
}
}
;
\draw (-0.5, 0.5) -- (0.0, 0.5);
%\draw (0.0, 0.5) -- (0.5, 0.5);
\draw (0.25, 1.5) -- (0.75, 1.5);
}}

\newcommand{\aromatreecut}{
\tikz[planar forest ] {

\draw (-1.0,1.0) edge[-] (0.0,1.0);
\draw (0.0,1.0) edge[-] (1.5,1.0);
\draw (-1.0,1.0) arc (90:270:0.5);
\draw (1.5,1.0) arc (90:-90:0.5);
\draw (-1.0,0.0) edge[-] (1.5,0.0);
\node [draw=none] at (0.0, 0.0) { } 
child {node [b] at (-1.0, 1.0) {  } edge from parent[draw=none] 
}
child {node [b] at (0.0, 1.0) {  } edge from parent[draw=none] 
child {node [b] at (0.0, 1.0) {  }  
}
}
child {node [b] at (1.5, 1.0) {  } edge from parent[draw=none] 
child {node [b] at (-0.5, 1.0) {  }  
}
child {node [b] at (0.5, 1.0) {  }  
}
}
;
\draw (-0.25, 1.5) -- (0.25, 1.5);
\draw (1.0, 1.6) -- (1.5, 1.6);
\draw (1.5, 1.4) -- (2.0, 1.4);
}}

\newcommand{\forestFBcad}{
\tikz[planar forest ] {

\draw (-0.5, 1.5) edge[-] (0.5, 1.5);
\draw (0.0,1.0) arc (90:270:0.5);
\draw (0.0,1.0) arc (90:-90:0.5);
\node [draw=none] at (0.0, 0.0) { } 
child {node [b] at (0.0, 1.0) {  } edge from parent[draw=none] 
child {node [b] at (0.0, 1.0) {  }  
child {node [b] at (0.0, 1.0) {  }  
}
}
}
;
\node [b] at (1.1, 0.0) {  } 
;
}}

\newcommand{\forestYBacut}{
\tikz[planar forest ] {

\draw (0.0,1.0) arc (90:270:0.5);
\draw (0.0,1.0) arc (90:-90:0.5);
\node [draw=none] at (0.0, 0.0) { } 
child {node [b] at (0.0, 1.0) {  } edge from parent[draw=none] 
}
;
\draw (1.0, 1.5) edge[-] (1.5, 1.5);
\draw(2.0, 1.5) edge[-] (2.5, 1.5);
\draw (1.2,1.0) edge[-] (2.2,1.0);
\draw (1.2,1.0) arc (90:270:0.5);
\draw (2.2,1.0) arc (90:-90:0.5);
\draw (1.2,0.0) edge[-] (2.2,0.0);
\node [draw=none] at (1.7, 0.0) { } 
child {node [b] at (-0.5, 1.0) {  } edge from parent[draw=none] 
child {node [b] at (0.0, 1.0) {  }  
}
}
child {node [b] at (0.5000000000000002, 1.0) {  } edge from parent[draw=none] 
child {node [b] at (0.0, 1.0) {  }  
}
}
;
}}

\newcommand{\forestBCKexffree}{
\tikz[planar forest ] {

\coordinate (o) at (-0.5, 1.0);
\draw (-0.5, 1.0)--(0.0, 0.0);
\draw (0.0, 2.0)--(0.5, 1.0);
\node [b] at (0.0, 0.0) {  } 
child {node [b] at (0.5, 1.0) {  }  
};
%\draw (-0.5, 0.5) -- (0.0, 0.5);
%\draw (0.0, 0.5) -- (0.5, 0.5);
%\draw (0.25, 1.5) -- (0.75, 1.5);
}}

\newcommand{\aromatreefreebis}{
\tikz[planar forest ] {

\draw (-1.0,1.0) edge[-] (0.0,1.0);
\draw (0.0, 2.0) edge[-] (0.0, 1.0);
\draw (0.0,1.0) edge[-] (1.5,1.0);
\draw (-1.0,1.0) arc (90:270:0.5);
\draw (1.5,1.0) arc (90:-90:0.5);
\draw (-1.0,0.0) edge[-] (1.5,0.0);
\draw (1.0, 2.0) edge[-] (1.5, 1.0);
\draw (2.0, 2.0) edge[-] (1.5, 1.0);
\node [draw=none] at (0.0, 0.0) { } 
child {node [b] at (-1.0, 1.0) {  } edge from parent[draw=none] 
}
child {node [b] at (0.0, 1.0) {  } edge from parent[draw=none] 
}
child {node [b] at (1.5, 1.0) {  } edge from parent[draw=none] 
}
;
%\draw (-0.5, 1.5) -- (0.5, 1.5);
}}

\newcommand{\forestFBfree}{
\tikz[planar forest ] {

\draw (0.0, 2.0) edge[-] (0.0, 1.0);
\draw (0.0,1.0) arc (90:270:0.5);
\draw (0.0,1.0) arc (90:-90:0.5);
\node [draw=none] at (0.0, 0.0) { } 
child {node [b] at (0.0, 1.0) {  } edge from parent[draw=none] 
}
;
\node [b] at (1.1, 0.0) {  } 
;
}}

\newcommand{\forestYBfree}{
\tikz[planar forest ] {

\draw (0.0,1.0) arc (90:270:0.5);
\draw (0.0,1.0) arc (90:-90:0.5);
\node [draw=none] at (0.0, 0.0) { } 
child {node [b] at (0.0, 1.0) {  } edge from parent[draw=none] 
}
;

\draw (1.2, 2.0) edge[-] (1.2, 1.0);
\draw (2.2000000000000002, 2.0) edge[-] (2.2000000000000002, 1.0);
\draw (1.2,1.0) edge[-] (2.2,1.0);
\draw (1.2,1.0) arc (90:270:0.5);
\draw (2.2,1.0) arc (90:-90:0.5);
\draw (1.2,0.0) edge[-] (2.2,0.0);
\node [draw=none] at (1.7, 0.0) { } 
child {node [b] at (-0.5, 1.0) {  } edge from parent[draw=none] 
}
child {node [b] at (0.5000000000000002, 1.0) {  } edge from parent[draw=none] 
}
;
}}

Examples of admissible cuts are presented below. It should be noted that an admissible cut for aroma cannot cut loops:
\[
\forestBCKexf, \quad \aromatreecut, \quad \forestFBcad, \quad \forestYBacut.
\]
The associated main component with free edges are
\[
\forestBCKexffree, \quad \aromatreefreebis, \quad \forestFBfree, \quad \forestYBfree.
\]

For the choices of admissible cuts and graftings on rooted part, we have the following result, adapted from \cite{Zhu24fna} for aromas.
\begin{proposition}\label{propgraftcut}
Let $r$ be a positive integer, let $A$ and $\overline{a}$ as in Definition \ref{defngrafting} and let $at$ be an aromatic tree without free edges. Let $\mathcal G(a, A,\overline{a})$ be the set of graftings of $A$ on $\overline{a}$ resulting in the tree $a$. Let $\mathcal C(a, A, \overline{a})$ be the set of admissible cuts of $a$ such that $P^c(a)= A$ and $\overline{R}^c(a)=\overline{a}$. Then, one has
\[\vert \mathcal C(a, A, \overline{a})\vert=\frac{\sigma(a)}{\sigma(A)\sigma(\overline{a})}\vert \mathcal G(a, A,\overline{a})\vert.\]
\end{proposition}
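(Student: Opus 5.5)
We prove the identity by a double-counting argument based on the orbit--stabiliser theorem, in the same spirit as Proposition \ref{monomial and forest} and Proposition \ref{prop:fertility_compatibility}. The strategy is to introduce a common ``labelled'' set that surjects onto both $\mathcal C(a,A,\overline{a})$ and $\mathcal G(a,A,\overline{a})$, and to compute its cardinality in two ways.

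\emph{Setting up the labelled set.} Fix a concrete representative of the aromatic tree $a$, with vertex set $V(a)$. Also fix concrete representatives of $A$, whose vertices are labelled, and of $\overline{a}$, whose vertices and free edges are labelled. Let $\mathcal I$ be the set of pairs $(b,\iota)$, where $b\in \mathcal G(a,A,\overline{a})$ is a grafting and $\iota$ is an isomorphism from the labelled graph obtained by the grafting $b$ onto the fixed representative of $a$.

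\emph{First count, via graftings.} For each grafting $b$ in $\mathcal G(a,A,\overline{a})$, the isomorphisms $\iota$ form a torsor under $Aut(a)$. Hence
\[
\vert\mathcal I\vert=\sigma(a)\,\vert\mathcal G(a,A,\overline{a})\vert .
\]

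\emph{Second count, via cuts.} Each pair $(b,\iota)$ determines an admissible cut $c$ of $a$, namely the image under $\iota$ of the edges created by the grafting. These edges are admissible: each connects a root of $A$ to a vertex of $\overline{a}$, so no root-to-leaf path meets two of them. Moreover, since the grafted edges attach roots of $A$ to free edges of $\overline{a}$, they never lie on the cycle of an aroma, so the cut does not touch the loops. By construction $P^c(a)=A$ and $\overline{R}^c(a)=\overline{a}$. Conversely, fix a cut $c\in\mathcal C(a,A,\overline{a})$. The pairs $(b,\iota)$ mapping to $c$ correspond to choices of labelled identifications of $P^c(a)$ with $A$ and of $\overline{R}^c(a)$ with $\overline{a}$, the latter including the matching of free edges. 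These identifications form a torsor under $Aut(A)\times Aut(\overline{a})$, where the automorphisms of $\overline{a}$ are required to permute its free edges. Once they are chosen, $b$ and $\iota$ are determined. Hence each fibre has cardinality $\sigma(A)\sigma(\overline{a})$, and
\[
\vert\mathcal I\vert=\sigma(A)\sigma(\overline{a})\,\vert\mathcal C(a,A,\overline{a})\vert .
\]
Equating the two counts gives the claim.

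\emph{Main obstacle.} The delicate step is making the fibre count precise in the presence of free edges and of the cyclic symmetry of aromas. The symmetry factor $\sigma(\overline{a})$ must be the one of the $C\times\mathbb N_0$-decorated graph, so automorphisms of $\overline{a}$ do not distinguish free edges sitting at the same vertex. Correspondingly, a grafting must be understood as an assignment of the roots of $A$ to vertices of $\overline{a}$ with the prescribed multiplicities $r_v$. This matches \eqref{numgrafting} together with $\|\overline{a}\|$ from \eqref{fulltrunk}. With this convention, the action of $Aut(A)\times Aut(\overline{a})$ on the identifications is free, because rooted components and aroma components are rigid once labelled. I would verify this freeness carefully for aromas, in particular that automorphisms of $\overline{a}$ rotating a cycle are correctly counted, and then conclude as in \cite{Zhu24fna}.
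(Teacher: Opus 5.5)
Your argument is correct, but it is organised differently from the paper's. The paper applies the orbit--stabiliser theorem twice. It asserts that $\mathrm{Aut}(a)$ acts transitively on $\mathcal C(a,A,\overline{a})$ and that $\mathrm{Aut}(\overline{a})\times\mathrm{Aut}(A)$ acts transitively on $\mathcal G(a,A,\overline{a})$. It then concludes from an identification of stabilisers, $\mathrm{Aut}_c(a)\simeq\mathrm{Aut}_b(\overline{a})\times\mathrm{Aut}_b(A)$.

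You instead fibre a single labelled set $\mathcal I$ over both $\mathcal G(a,A,\overline{a})$ and $\mathcal C(a,A,\overline{a})$. The fibres are torsors under $\mathrm{Aut}(a)$ and under $\mathrm{Aut}(A)\times\mathrm{Aut}(\overline{a})$ respectively. This uses only freeness, which is automatic because automorphisms are vertex permutations. So the extra verification you plan for aromas is unnecessary. What does matter is the convention you state correctly: $\sigma(\overline{a})$ counts vertex permutations preserving the number of free edges at each vertex, and a grafting assigns the labelled components of $A$ to vertices with multiplicities $r_v$, as in \eqref{numgrafting}.

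Not needing transitivity is a genuine advantage, since transitivity can fail for aromas. Let $A$ be a single vertex together with a two-vertex ladder. Let $\overline{a}$ be the aroma with directed cycle $p_1\to p_2\to p_3\to p_1$, where $p_1$ and $p_2$ are bare vertices with one free edge each, and $p_3$ has a two-vertex ladder grafted on it. Both graftings of $A$ on $\overline{a}$ produce the same aroma $a$. Its three cycle vertices have, in cyclic order, a single leaf, a two-vertex ladder and a two-vertex ladder grafted on them.

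In this example $\mathrm{Aut}(a)$, $\mathrm{Aut}(\overline{a})$ and $\mathrm{Aut}(A)$ are all trivial: no rotation of the directed cycle matches the hanging trees, ladders are rigid, and the two components of $A$ differ. Yet $|\mathcal G(a,A,\overline{a})|=|\mathcal C(a,A,\overline{a})|=2$, since you cut the leaf together with either of the two ladders hanging from the cycle. So neither action is transitive, although the identity $2=\frac{1}{1\cdot 1}\cdot 2$ still holds, exactly as your count predicts.

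The paper's route gives a finer, orbit-wise matching of stabilisers, but to be complete it would have to be run orbit by orbit. Your labelled-set count avoids the issue altogether.
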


\begin{proof}
The group $\mop{Aut} a$ acts transitively on $\mathcal C(a, A, \overline{a})$. The stabiliser of a cut $c$ will be denoted by $\mop{Aut}_{c} a \tau$ and its cardinal by $\sigma_c(a)$. On the other hand, the group $\mop{Aut }\overline{a}\times \mop{Aut} A$ acts transitively on $\mathcal G(a, A, \overline{a})$. To see this, consider two graftings $b, b'\in \mathcal G(a, A, \overline{a})$: there exists a permutation $\alpha$ of $\mathcal V(\overline{a})$ such that $F_{b}(x)$ and $F_{b'}\big(\alpha(x)\big)$ are isomorphic, and the permutation $\alpha$ necessarily comes from an automorphism of the tree $a$. The stabiliser of $b$ is $\mop{Aut}_b \overline{a}\times \mop{Aut}_b A$, where $\mop{Aut}_b\overline{a}$ is the subgroup of those $\alpha\in \mop{Aut}\overline{a}$ such that $F_b\big(\alpha(x)\big)$ and $F_b(x)$ are isomorphic for any vertex $x$ of $\overline{a}$, and where $\mop{Aut}_b A=\prod_{x\in \mathcal {V}(\overline{a})}\mop{Aut}_b(x)$ is the subgroup $\mop{Aut} A$ which respects the subforests $F_b(x)$. By the orbit-stabiliser theorem, we therefore have
\[\frac{\vert \mathcal C(a, A, \overline{a})\vert }{\vert \mathcal G(a, A, \overline{a})\vert}=\frac{\sigma(a)}{\vert\mop{Aut}_c(a)\vert}\frac{\vert\mop{Aut}_b (\overline{a})\vert.\vert \mop{Aut}_b (A)\vert }{\sigma(A)\sigma(\overline{a})}\]
We conclude by noticing the natural isomorphism $\mop{Aut}_c(a) \sim \mop{Aut}_b (\overline{a})\times \mop{Aut}_b (A)$.
\end{proof}

\subsection{The aromatic LOT coproduct}\label{explicit formula}

Let us provide an alternative definition of the aromatic LOT coproduct and show that is it equivalent to the one given in Theorem \ref{arobck1}.
\begin{definition}\label{admcutmonomial}
An admissible cut $\mathbf c$ of a monomial $x^{\kappa}\in M_0$ is a tuple of the form $(\mathbf k^1,\dots, \mathbf k^r,\overline{\kappa})$ that satisfies
\[\kappa=\mathbf k^1+\cdots+\mathbf k^r+\overline{\kappa},
\quad
\mathbf{wt}(x^{\mathbf k^j})=-1.
\]
Analogously, an admissible cut $\mathbf c$ of a monomial $x^{\mathbf k}\in M_{-1}$ is a tuple of the form $(\mathbf k^1,\dots, \mathbf k^r,\overline{\mathbf k})$ that satisfies
\[\mathbf k=\mathbf k^1+\cdots+\mathbf k^r+\overline{\mathbf k},
\quad
\mathbf{wt}(x^{\mathbf k^j})=-1.
\]
We associate to an admissible cut on $M_0$ the following maps (and analogously on $M_{-1}$),
\[
P^{\mathbf c}(x^{\kappa})=x^{\mathbf k^1}\odot \cdots \odot x^{\mathbf k^r},\quad
R^{\mathbf c}(x^{\kappa})=\overline{\partial}^r x^{\overline{\kappa}}.
\]
The LOT coproduct is given on $M_0$ and $M_{-1}$ by
\[\Delta_{LOT}^{aro}(x^{\kappa})=x^{\kappa}\otimes \mathbf{1}+\sum_{\mathbf c \in Adm(x^{\kappa})} P^{\mathbf c}(x^{\kappa})\otimes R^{\mathbf c}(x^{\kappa}),\quad
\Delta_{LOT}^{aro}(x^{\mathbf k})=\sum_{\mathbf c \in Adm(x^{\mathbf k})} P^{\mathbf c}(x^{\mathbf k})\otimes R^{\mathbf c}(x^{\mathbf k}).\]
Then, $\Delta_{LOT}^{aro}: \AA M_{-1} \rightarrow \AA\MM\otimes \AA M_{-1}$ is defined by \eqref{coproduct}.
\end{definition}

\begin{ex}
One finds for the monomial $x_{-1}^2x_{1}^2$ that
\begin{align*}
\Delta_{LOT}^{aro}(x_{-1}^2x_{1}^2)  =& x_{-1}^2x_{1}^2\otimes \mathbf 1 + \mathbf 1\otimes x_{-1}^2x_{1}^2 + 2x_{-1}\otimes \overline{\partial}(x_{-1}x_{1}^2) + 2x_{-1}^2x_{1}\otimes \overline{\partial}x_{1}\\
&   + x_{-1}\cdot x_{-1}\otimes \overline{\partial}^2(x_{1}^2)\\
 = & x_{-1}^2x_{1}^2 \otimes \mathbf 1 + \mathbf 1 \otimes x_{-1}^2x_{1}^2 + 4x_{-1}\otimes x_{-1}x_{0}x_{1} + 2x_{-1}^2x_{1}\otimes x_{0}\\
&   + 2x_{-1}\cdot x_{-1}\otimes x_{-1}x_{1} + 2x_{-1}\cdot x_{-1}\otimes x_{0}^2
\end{align*}
\end{ex}

Let us denote by $\vert \mathbf c \vert$ the set of admissible cuts $\mathbf c'$ such that $P^{\mathbf c}(x^{\kappa}\odot  x^{\mathbf k})=P^{\mathbf c'}(x^{\kappa} \odot x^{\mathbf k})$ (and therefore $R^{\mathbf c}(x^{\kappa}\odot  x^{\mathbf k})=R^{\mathbf c'}(x^{\kappa}\odot x^{\mathbf k})$), and by $\| \mathbf c \|$ the cardinal of this class. We have 
\begin{eqnarray*}
\mathbf \|\mathbf c\| & = &\frac{\kappa!}{\overline{\kappa}!\sigma(x^{\kappa^1}\odot \cdots\odot x^{\kappa^r})}\cdot \frac{\mathbf k!}{\overline{\mathbf k!}\sigma(\mathbf x^{\mathbf k^1}\odot \cdots\odot x^{\mathbf k^t})}\\
& = & \frac{\kappa!}{\overline{\kappa}\kappa^1!\cdots \kappa^r!\sigma^{ext}(x^{\kappa^1}\odot\cdots\odot x^{\kappa^r})}\cdot \frac{\mathbf k!}{\overline{\mathbf k}\mathbf k^1!\cdots \mathbf k^t!\sigma^{ext}(x^{\mathbf k^1}\odot \cdots\odot  x^{\mathbf k^t})}
\end{eqnarray*}
whenever $P^{\mathbf c}(x^{\kappa}\odot x^{\mathbf k})=x^{\kappa^1}\odot \cdots\odot  x^{\kappa^r}\odot x^{\mathbf k^1}\odot \cdots\odot x^{\mathbf k^t}$, where $\sigma^{ext}$ is the external symmetry factor. In analogy with trees, the full trunk is defined by 
\[\overline{R}^{\mathbf c}(x^{\kappa}\odot x^{\mathbf k}):= x^{\overline{\kappa}}\odot x^{\overline{\mathbf k}}.\]

\begin{definition}\label{admcut}
 Let $\mathbf c$ be an admissible cut of the monomial $x^{\kappa}\odot x^{\mathbf k}$, and let $c$ be an admissible cut of aroma tree $at$. We say that $c$ matches $\mathbf c$ and write $c \sim \mathbf c$ whenever
 \begin{itemize}
 \item $x^{\kappa}\odot x^{\mathbf k}=\Phi(a)\Phi(\tau),$
 \item $P^{\mathbf c}(x^{\kappa }\odot x^{\mathbf k})= \Phi\big( P^{c}(a\tau)\big)$
 \end{itemize}
 An admissible cut $c$ matches $\mathbf c$ if and only if it matches any element $\mathbf c' \in \vert c \vert$. 
\end{definition}

Note that the second condition implies $\overline{R}^{\mathbf c}(x^{\kappa}\odot x^{\mathbf k})=\Phi\big( \overline{R}^{c}(a\tau)\big)$. Therefore, we have $P^{\mathbf c}(x^{\kappa}\odot x^{\mathbf k})=\Phi(P^c(a\tau))=\Phi(P^c(a))\Phi(P^c(\tau))= P^c(x^{\kappa})\odot P^c(x^{\mathbf k})$, and analogously for $\overline{R}^{\mathbf c}(x^{\kappa}\odot x^{\mathbf k})$. Then, Theorem \ref{arobck1} is a straightforward consequence of the following lemma.
\begin{lemma}\label{aroj}
For any monomial $x^{\kappa}\odot x^{\mathbf k}$ in $\mathcal \HH_{LOT}^{aro}$ and for any admissible cut $\mathbf c$ of $x^{\kappa}\odot x^{\mathbf k}$, the following holds:
\begin{equation}\label{cojembedding}
\| \mathbf c\|(j\otimes j)\big( P^{\mathbf c}(x^{\kappa}\odot x^{\mathbf k})\otimes R^{\mathbf c}(x^{\kappa}\odot x^{\mathbf k})\big)=\sum_{a\tau, \Phi(a\tau)=x^{\kappa}\odot x^{\mathbf k}}\sum_{c\in Adm(a\tau), c \sim \mathbf c}\frac{\sigma(x^{\kappa}\odot x^{\mathbf k})}{\sigma(a\tau)}P^{c}(a\tau)\otimes R^{c}(a\tau).
\end{equation}
\end{lemma}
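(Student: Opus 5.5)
The plan is to follow the strategy of \cite{Zhu24fna} for standard multi-indices. We compare both sides of \eqref{cojembedding} by pairing against an arbitrary pure tensor $A\otimes b$, where $A$ is an aromatic forest and $b$ an aromatic tree without free edges, and show the coefficients agree. By definition of $\overline{j^{aro}}$ through \eqref{formulajaro}, the left-hand side only involves $A$ with $\Phi(A)=P^{\mathbf c}(x^{\kappa}\odot x^{\mathbf k})$. It only involves $b$ with $\Phi(b)=R^{\mathbf c}$-type monomials, so we may restrict to such pairs. The right-hand side involves only pairs $(A,b)$ arising from a matching cut $c\sim\mathbf c$, and these satisfy the same constraints.

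The first step is to rewrite $j(R^{\mathbf c})$ using free edges. Since $R^{\mathbf c}=\overline{\partial}^r x^{\overline{\kappa}}\odot\overline{\partial}^{t}x^{\overline{\mathbf k}}$ (with the multi-aroma/tree split), Corollary \ref{deltabar-j} gives
\[
j\big(R^{\mathbf c}\big)=\overline{\delta}^{\,r+t}\,\overline{j^{aro}}\big(x^{\overline{\kappa}}\odot x^{\overline{\mathbf k}}\big)
=\overline{\delta}^{\,r+t}\sum_{\overline{\Phi}(\overline{a}\,\overline{\tau})=\overline{R}^{\mathbf c}}\frac{\sigma(\overline{R}^{\mathbf c})}{\sigma(\overline{a}\,\overline{\tau})}\,\overline{a}\,\overline{\tau}.
\]
Proposition \ref{delete-free-edges} in the form \eqref{fulltrunk} then turns each $\overline{\delta}^{r+t}(\overline{a}\,\overline{\tau})$ into $\|\overline{a}\,\overline{\tau}\|$ times the trunk. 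By \eqref{numgrafting}, this factor equals the number of graftings $|\mathcal G(A,\overline{a}\,\overline{\tau})|$. Similarly, $j(P^{\mathbf c})$ is expanded as a sum over forests $A$ with $\Phi(A)=P^{\mathbf c}$. Here Proposition \ref{monomial and forest} is used to convert the product of the individual $j$'s into the weight $\sigma^{ext}(P^{\mathbf c})\prod\sigma(\text{components})/\sigma(A)$. Combining, the left-hand side becomes a sum over triples $(A,\overline{a}\,\overline{\tau},\text{grafting})$. Grouping by the resulting aromatic tree $a\tau$ obtained by performing the grafting, the coefficient is
\[
\|\mathbf c\|\,\frac{\sigma(P^{\mathbf c})\sigma(\overline{R}^{\mathbf c})}{\sigma(A)\sigma(\overline{a}\,\overline{\tau})}\,|\mathcal G(a\tau,A,\overline{a}\,\overline{\tau})|.
\]

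The second step converts graftings to cuts. Proposition \ref{propgraftcut} replaces $|\mathcal G(a\tau,A,\overline{a}\,\overline{\tau})|$ by $\frac{\sigma(A)\sigma(\overline{a}\,\overline{\tau})}{\sigma(a\tau)}|\mathcal C(a\tau,A,\overline{a}\,\overline{\tau})|$. The tree symmetry factors cancel, leaving $\frac{1}{\sigma(a\tau)}|\mathcal C|$ times the monomial prefactor $\|\mathbf c\|\sigma(P^{\mathbf c})\sigma(\overline{R}^{\mathbf c})$. Summing $|\mathcal C|$ over $(A,\overline{a}\,\overline{\tau})$ with the prescribed fertility images gives exactly the number of admissible cuts $c$ of $a\tau$ with $c\sim\mathbf c$ and given $(P^c,R^c)$. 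This is the right-hand side, provided the remaining scalar identity holds:
\[
\|\mathbf c\|\,\sigma(P^{\mathbf c})\,\sigma(\overline{R}^{\mathbf c})=\sigma(x^{\kappa}\odot x^{\mathbf k}).
\]
This follows from the explicit formula for $\|\mathbf c\|$, since $\sigma(P^{\mathbf c})=\sigma^{ext}\prod \mathbf k^j!$, $\sigma(\overline{R}^{\mathbf c})=\overline{\kappa}!\,\overline{\mathbf k}!$, and $\sigma(x^{\kappa}\odot x^{\mathbf k})=\kappa!\,\mathbf k!$ when aroma and tree parts are distinct.

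The main obstacle is the bookkeeping of symmetry factors in the aromatic setting, for two reasons. First, $\sigma(\mathbf a)$ is not multiplicative over aromas, so the external factors from Proposition \ref{monomial and forest} must be tracked carefully when $P^{\mathbf c}$ has repeated components. Second, in aromas the cycle edges cannot be cut, and one must check that the transitive group actions in Proposition \ref{propgraftcut} still hold: an automorphism of an aroma can rotate the cycle. I would handle this by treating the aroma and tree factors separately, using \eqref{coproduct} and the multiplicativity of $j^{aro}$, and verifying the identity first for a single aroma $x^{\kappa}$ and a single tree $x^{\mathbf k}$. I expect the orbit–stabiliser isomorphism $\mathrm{Aut}_c\simeq\mathrm{Aut}_b(\overline{a})\times\mathrm{Aut}_b(A)$ to go through unchanged there.
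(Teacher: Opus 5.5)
Your argument is essentially the paper's proof. For a single factor you use Corollary \ref{deltabar-j} to replace $\overline{\partial}^r$ by $\overline{\delta}^r$, then \eqref{fulltrunk}, \eqref{numgrafting} and Proposition \ref{propgraftcut} to pass from $\|\overline{a}\|$ to graftings to cuts, close with $\|\mathbf c\|\,\sigma(P^{\mathbf c})\,\sigma(\overline{R}^{\mathbf c})=\kappa!$, and extend to $x^{\kappa}\odot x^{\mathbf k}$ by multiplicativity of $j^{aro}$ and \eqref{coproduct}.

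Keep to the factor-by-factor route of your last paragraph rather than the combined computation of your first step, because the combined version is wrong as written. First, $\overline{\partial}^{r+t}(x^{\overline{\kappa}}\odot x^{\overline{\mathbf k}})$ is not $\overline{\partial}^r x^{\overline{\kappa}}\odot\overline{\partial}^{t}x^{\overline{\mathbf k}}$: acting as a derivation it spreads the lowerings over both factors and produces a factor $\binom{r+t}{r}$, and dually $\overline{\delta}^{r+t}$ counts graftings that move cut pieces between aroma and tree. Second, the combined scalar identity fails when the aroma and tree cuts share a piece, since $\sigma(P^{\mathbf c})$ then picks up an extra external factor that $\|\mathbf c\|$, built from the separate symmetry factors, does not cancel.
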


\begin{proof}
Fix an admissible cut $\mathbf c$ of the monomial $x^{\kappa}$. Denoting the left-hand side and the right-hand side of \eqref{cojembedding} by $\mathcal L$ and $\mathcal R$ respectively.
A computation yields
\begin{align*}
\mathcal L & = \frac{\kappa!}{\kappa^1!\cdots \kappa^r!\overline{\kappa}!\sigma^{ext}(x^{\kappa^1}\odot \cdots\odot x^{\kappa^r})}j(x^{\kappa^1})\cdots j(x^{\kappa^r})\otimes j\circ \overline{\partial}^r(x^{\overline{\kappa}})\\
& = (\Id \otimes \overline{\delta}^r)\Big(\frac{\kappa!}{\kappa^1!\cdots \kappa^r!\overline{\kappa}!\sigma^{ext}(x^{\kappa^1} \odot \cdots\odot x^{\kappa^r})}j(x^{\kappa^1})\cdots j(x^{\kappa^r})\otimes j(x^{\overline{\kappa}})\Big)  \\
& (\text{from Corollary } \ref{deltabar-j})\\
& = \frac{\kappa!}{\kappa^1!\cdots \kappa^r!\overline{\kappa}!\sigma^{ext}(x^{\kappa^1}\odot\cdots\odot x^{\kappa^r})}(\Id \otimes \overline{\delta}^r)\left(\sum_{ \genfrac{}{}{0pt}{2}{(a^j)_{1,\ldots, r,}}  {\Phi(a^j)=\mathbf x^{\mathbf \kappa^j} }}\sum_{\overline{a}, \Phi(\overline{a})=x^{\overline{\kappa}}}\frac{\kappa^1!}{\sigma(a^1)} \cdots \frac{\kappa^r!}{\sigma(a^r)}\frac{\overline{\kappa}}{\sigma(\overline{a})}\right)\\
& =  \kappa!(\Id \otimes \overline{\delta}^r)\left(\sum_{A, \Phi(A)= x^{\kappa^1}\odot \cdots\odot x^{\kappa^r}}\sum_{\overline{a}, \Phi(\overline{a})=x^{\overline{\kappa}}}\frac{1}{\sigma(A)\sigma(\overline{a})}A\otimes \overline{a}\right).
\end{align*}
From equations \eqref{fulltrunk} and \eqref{numgrafting} and Proposition \ref{propgraftcut}, we obtain 
\begin{align*}
\mathcal L & = \kappa! \sum_{A, \Phi(A)=x^{\kappa^1}\odot \cdots\odot  x^{\kappa^r}}\sum_{\overline{a}, \Phi(\overline{a})=x^{\overline{\kappa}}}\| \overline{a}\|\frac{1}{\sigma(A)\sigma(\overline{a})}A\otimes \overline{a_0}\\
& = \kappa! \sum_{A, \Phi(A)= x^{\kappa^1}\odot \cdots\odot x^{\kappa^r}}\sum_{\overline{a}, \Phi(\overline{a})=x^{\overline{\kappa}}}\frac{\vert\mathcal G(A, \overline{a})\vert}{\sigma(A)\sigma(\overline{a})}A\otimes \overline{a}_0\\
& = \kappa! \sum_{a, \Phi(a)=x^{\kappa}}\sum_{A, \Phi(A)=x^{\kappa^1}\odot \cdots\odot x^{\kappa^r}}\sum_{\overline{a}, \Phi(\overline{a})=x^{\overline{\kappa}}}\frac{\vert \mathcal G(a, A, \overline{a})\vert }{\sigma(A)\sigma(\overline{a})}A\otimes \overline{a}_0\\
& = \kappa!\sum_{a, \Phi(a)=x^{\kappa}}
\sum_{A, \Phi(A)=x^{\kappa^1}\odot \cdots\odot  x^{\kappa^r}}\sum_{\overline{a}, \Phi(\overline{a})=x^{\overline{\kappa}}}\frac{\vert \mathcal C(a, A, \overline{a})\vert}{\sigma(a)}A\otimes \overline{a}_0\\
& = \sum_{a, \Phi(a)=x^{\kappa}}\frac{\kappa!}{\sigma(a)}\sum_{c\in Adm(a), c\sim \mathbf c}P^{c}(a)\otimes R^{c}(a)\\
& = \mathcal R.
\end{align*}
We obtain the expression of $\Delta_{LOT}^{aro}$ on $M_0$ and $M_{-1}$. Then, as the coproduct $\Delta_{LOT}^{aro}$ is a morphism for $\odot$, we have
\begin{align*}
(j\otimes j)\big(\Delta_{LOT}^{aro}(x^\kappa \odot x^{\mathbf k})\big) & = (j\otimes j)\big(\Delta_{LOT}^{aro}(x^\kappa)\odot \Delta_{LOT}^{aro}(x^{\mathbf k})\big)\\
& = \Delta_{LOT}^{aro}\big(j(x^\kappa)\big)\odot \Delta_{LOT}^{aro}\big(j(x^{\mathbf k})\big).
\end{align*}
This yields the identity \eqref{cojembedding}.
\end{proof}

Let us now study the links between $j^{cl}$ and $j^{aro}$ to prove Theorem \ref{thm:embedding}.
\begin{lemma}
\label{lemma:compatibility_j_maps}
The maps $j^{cl}$ and $j^{aro}$ satisfy the identity
\[j^{cl}\circ\varphi^*=\psi^* \circ j^{aro}.\]
\end{lemma}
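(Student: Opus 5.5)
We prove $j^{cl}\circ\varphi^*=\psi^*\circ j^{aro}$ by duality. Both sides are linear maps $\AA\MM\rightarrow\CC\FF$, so it suffices to show that for every monomial $\M\in\AA\MM$ and every clumped forest $\F\in\CC\FF$,
\[
\langle j^{cl}\varphi^*(\M),\F\rangle=\langle \psi^* j^{aro}(\M),\F\rangle .
\]
All four maps are transposes of maps already available. By definition $\varphi^*$ is the transpose of $\varphi$ and $\psi^*$ is the transpose of $\psi$. The map $j^{aro}$ is the transpose of the fertility map $\Phi\colon\AA\FF\rightarrow\AA\MM$ on generators $M_0$, $M_{-1}$. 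The first step is therefore to upgrade this to a statement on the whole of $\AA\MM$ and $\CC\MM$:
\[
\langle \Phi(\pi),\M\rangle=\langle \pi,j^{aro}(\M)\rangle \quad (\pi\in\AA\FF),\qquad
\langle \Phi(\F),\M_{cl}\rangle=\langle \F,j^{cl}(\M_{cl})\rangle \quad (\F\in\CC\FF).
\]

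Once this transpose property holds, the claim reduces to the identity of fertility maps
\[
\Phi\circ\psi=\varphi\circ\Phi\colon \CC\FF\rightarrow\AA\MM .
\]
This identity is immediate from the definitions: $\psi$ forgets the clumping by sending $(\mathbf a^1\tau^1)\cdots(\mathbf a^n\tau^n)$ to $\mathbf a^1\cdots\mathbf a^n\tau^1\cdots\tau^n$, and $\varphi$ does the same with $\diamond\mapsto\odot$. Meanwhile $\Phi$ acts vertexwise on every aroma and tree, and is multiplicative for both $\cdot/\odot$ and $\diamond$. Transposing then gives
\[
\langle j^{cl}\varphi^*\M,\F\rangle=\langle \varphi^*\M,\Phi\F\rangle=\langle \M,\varphi\Phi\F\rangle=\langle\M,\Phi\psi\F\rangle=\langle j^{aro}\M,\psi\F\rangle=\langle\psi^*j^{aro}\M,\F\rangle.
\]

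The main obstacle is the first step, because $j^{aro}$ is extended to $\AA\MM$ multiplicatively and formula \eqref{formulajaro} does not extend to products, as the paper notes. On a product monomial $\M=(x^{\mathbf k^1})^{\odot p_1}\odot\cdots$, with aromas treated likewise, we need
\[
\prod_i j^{aro}(x^{\mathbf k^i})^{p_i}=\sum_{\pi,\ \Phi(\pi)=\M}\frac{\sigma(\M)}{\sigma(\pi)}\pi .
\]
To prove this, expand the left side using \eqref{formulajaro} factor by factor. The coefficient of a given forest $\pi$ is then a count of ordered tuples of components $(\tau_1,\dots,\tau_r)$ with product $\pi$ and $\Phi(\tau_j)$ equal to the prescribed factors, weighted by $\prod\sigma(x^{\mathbf k^i})^{p_i}/\prod\sigma(\tau_j)$. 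Proposition \ref{monomial and forest} evaluates this count as $\sigma^{ext}(\M)/\sigma^{ext}(\pi)$; a priori it is stated for $\CC\MM$ and $\CC\FF$, but the same orbit-stabiliser argument applies verbatim in $\AA\MM$ and $\AA\FF$. Combining it with $\sigma=\sigma^{ext}\sigma^{int}$ on both sides yields exactly $\sigma(\M)/\sigma(\pi)$.

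Since $\Phi$ is multiplicative, the transpose property then follows on all monomials, provided we use the factorisation $\sigma(\mathbf a\tau)=\sigma(\mathbf a)\sigma(\tau)$ and the corresponding factorisation of $\sigma$ on $\AA\MM$ to handle the multiaroma part. The clumped version follows in the same way: apply the aromatic result inside each clump $\mathbf y^i\odot x^{\mathbf k^i}$, then apply Proposition \ref{monomial and forest} once more to the outer $\diamond$ product, whose symmetry factor splits as $\prod p_i!\,\sigma(\cdot)^{p_i}$ exactly as $\sigma$ does on $\CC\FF$.
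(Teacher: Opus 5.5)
Your proof is correct, but it is organised differently from the paper's.

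\textbf{The paper's route.} It argues by direct expansion on a monomial $x^{\kappa^1}\odot\cdots\odot x^{\kappa^m}\odot x^{\mathbf k^1}\odot\cdots\odot x^{\mathbf k^n}$. It expands $j^{aro}$ factor by factor with \eqref{formulajaro} and regroups the aromas into multiaromas $\mathbf r^i$ placed next to the trees $\tau^i$. Using $\sigma=\sigma^{ext}\sigma^{int}$, it turns the coefficient products into $\sigma(\mathbf y^i\odot x^{\mathbf k^i})/\sigma(\mathbf r^i\tau^i)$. Finally it recognises the image under $\psi^*$ as $j^{cl}$ applied to the sum over clumpings that defines $\varphi^*$.

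\textbf{Your route.} You first show that $j^{aro}$ and $j^{cl}$ are adjoints of $\Phi$ on all of $\AA\MM$ and $\CC\MM$. This uses Proposition \ref{monomial and forest} together with the external/internal splitting of $\sigma$. The splitting matches on both sides because $\Phi$ sends aromas to $M_0$ and trees to $M_{-1}$, so repeated factors are counted in the same way. After that, the lemma is just the transpose of the obvious identity $\Phi\circ\psi=\varphi\circ\Phi$. Your chain of pairings is right, and nondegeneracy of the pairing on $\CC\FF$ finishes the argument.

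\textbf{What each buys.} Both proofs rest on the same orbit--stabiliser count. Yours isolates it in one reusable statement and never needs explicit formulas for $\varphi^*$ and $\psi^*$. That is exactly where the paper's computation is loosest: it fixes one distribution $\mathbf y=\mathbf y^1\odot\cdots\odot\mathbf y^n$ and does not display the symmetry ratios appearing in $\varphi^*$. In return, the paper's route writes both sides out explicitly.

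\textbf{One point of presentation.} You say that \eqref{formulajaro} ``does not extend to products, as the paper notes''. Yet your first step proves precisely that
$j^{aro}(\mathbb M)=\sum_{\Phi(\pi)=\mathbb M}\frac{\sigma(\mathbb M)}{\sigma(\pi)}\pi$
holds for every monomial $\mathbb M\in\AA\MM$, provided $\sigma$ includes the factors $q_j!$ and $p_i!$. You should state it that way: the identity is true, it is just not immediate from the definition. Echoing the paper's remark as written contradicts your own key step.
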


\begin{proof}
We choose any element $\mathbf y \odot \mathbf z = x^{\kappa^1}\odot \dots \odot x^{\kappa^m}\odot x^{\mathbf k^1}\odot \dots \odot x^{\mathbf k^n}\in \HH_{LOT}^{aro}$, it can also be written as $\mathbf y \odot \mathbf z= \mathbf y^1 \odot \dots \odot \mathbf y^n \odot x^{\mathbf k^1}\odot \dots \odot x^{\mathbf k^n}$, here we should note that $\mathbf y=\mathbf y^1\odot \dots \odot \mathbf y^n$ and $\mathbf y^i, 1\leq i\leq n$ which satisfies $\Phi(\mathbf r^i)=\mathbf y^i$, where we write multi-aromas by $\mathbf r^i$ and aromas by $r^i$.
Then, a computation yields
\begin{align*}
\psi^*&\circ j^{aro}(\mathbf y \odot \mathbf z)
= \psi^*\circ j^{aro}(x^{\kappa^1}\odot \cdots \odot x^{\kappa^m}\odot x^{\mathbf k^1}\odot \cdots \odot x^{\mathbf k^n})\\
&= \psi^* \big((\sum_{\Phi(x^{\kappa^1})=r^1}\frac{\sigma(x^{\kappa^1})}{\sigma(r^1)}r^1)\cdots (\sum_{\Phi(x^{\kappa^m})=r^m}\frac{\sigma(x^{\kappa^m})}{\sigma(r^m)}r^m) \\
& \cdot (\sum_{\Phi(\tau^1)=x^{\mathbf k^1}}\frac{\sigma(x^{\mathbf k^1})}{\sigma(\tau^1)}\tau^1) \cdots (\sum_{\Phi(\tau^1)=x^{\mathbf k^n}}\frac{\sigma(x^{\mathbf k^n})}{\sigma(\tau^n)}\tau^n)\big)\\
&=\psi^* \big(\sum_{\genfrac{}{}{0pt}{2}{(r^i)_{1,\ldots, m, }(\tau^j)_{1, \ldots, n}}  {\Phi(r^i)=\mathbf x^{\mathbf \kappa^i}, \Phi(\tau^j)=x^{\mathbf k^j}}}\frac{\sigma(x^{\kappa^1})}{\sigma(r^1)}r^1\cdots \frac{\sigma(x^{\kappa^m})}{\sigma(r^m)}r^m\cdot \frac{\sigma(x^{\mathbf k^1})}{\sigma(\tau^1)}\tau^1\cdots \frac{\sigma(x^{\mathbf k^n})}{\sigma(\tau^n)}\tau^n\big)\\
&= \psi^* \big(\sum_{\genfrac{}{}{0pt}{2}{(\mathbf \mathbf r^i)_{1,\ldots, n, }(\tau^j)_{1, \ldots, n}}  {\Phi(\mathbf r^i)=\mathbf y^i, \Phi(\tau^j)=x^{\mathbf k^j}}}\frac{\sigma^{ext}(\mathbf y^1)\sigma^{int}(\mathbf y^1)}{\sigma^{ext}(\mathbf r^1)\sigma^{int}(\mathbf r^1)}\mathbf r^1\cdots \frac{\sigma^{ext}(\mathbf y^n)\sigma^{int}(\mathbf y^n)}{\sigma^{ext}(\mathbf r^n)\sigma^{int}(\mathbf r^n)}\mathbf r^n\cdot \frac{\sigma(x^{\mathbf k^1})}{\sigma(\tau^1)}\tau^1\cdots \frac{\sigma(x^{\mathbf k^n})}{\sigma(\tau^n)}\tau^n\big)\\
% &   & \text{(by proposition \ref{monomial and forest})} \\
& =\sum_{\genfrac{}{}{0pt}{2}{(\mathbf r^i)_{1,\ldots, n }}{\Phi(\mathbf r^1)\cdots \Phi(\mathbf r^n)=\mathbf y^1\odot \cdots \odot \mathbf y^n}} \sum_{\genfrac{}{}{0pt}{2}{(\mathbf r^i)_{1,\ldots, n, }(\tau^j)_{1, \ldots, n}}  {\Phi(\mathbf r^i \tau^j)=\mathbf y^i\odot x^{\mathbf k^j}}}(\frac{\sigma(\mathbf y^1 \odot x^{\mathbf k^1})}{\sigma(\mathbf r^1\tau^1)}\mathbf r^1\tau^1) \cdots (\frac{\sigma(\mathbf y^n \odot x^{\mathbf k^n})}{\sigma(\mathbf r^n\tau^n)}\mathbf r^n\tau^n)\\
& = \sum_{\genfrac{}{}{0pt}{2}{(\mathbf r^i)_{1,\ldots, n, }(\tau^j)_{1, \ldots, n}}  {\Phi(\mathbf r^i \tau^j)=\mathbf y^i\odot x^{\mathbf k^j}}} \sum_{\genfrac{}{}{0pt}{2}{(\mathbf r^i)_{1,\ldots, n }}{\Phi(\mathbf r^1)\cdots \Phi(\mathbf r^n)=\mathbf y^1\odot \cdots \odot \mathbf y^n}}(\frac{\sigma(\mathbf y^1 \odot x^{\mathbf k^1})}{\sigma(\mathbf r^1\tau^1)}\mathbf r^1\tau^1) \cdots (\frac{\sigma(\mathbf y^n \odot x^{\mathbf k^n})}{\sigma(\mathbf r^n\tau^n)}\mathbf r^n\tau^n)\\
& = j^{cl}\big(\sum_{\genfrac{}{}{0pt}{2}{(\mathbf r^i)_{1,\ldots, n }}{\Phi(\mathbf r^1)\cdots \Phi(\mathbf r^n)=\mathbf y^1\odot \cdots \odot \mathbf y^n}}(\mathbf y^1\odot x^{\mathbf k^1})\diamond \cdots \diamond(\mathbf y^n\odot x^{\mathbf k^n})\big)\\
& = j^{cl}\circ \varphi^*(\mathbf y^1\odot \cdots \odot \mathbf y^n\odot x^{\mathbf k^1}\odot \cdots \odot x^{\mathbf k^n})\\
& = j^{cl}\circ \varphi^*(\mathbf y \odot \mathbf z)
\end{align*}
Hence the result.
\end{proof}

\begin{proof}[Proof of the embeddings of Theorem \ref{thm:embedding}]
By applying Lemma \ref{aroj} and summing over all classes $\vert \mathbf c\vert$ of admissible cuts $\mathbf c$ of the monomial $x^{\kappa}\odot x^{\mathbf k}$, the coproduct $\Delta_{LOT}^{aro}$ verifies on $\AA \MM$:
\[
    (j^{aro}\otimes j^{aro})\circ \Delta_{LOT}^{aro}=\Delta_{BCK}^{aro}\circ j^{aro}.
\]
This implies that on $\AA M_{-1}$, we have
\begin{align*}
(j^{cl}\otimes j^{cl})\circ \Delta_{LOT}^{cl}
&=(j^{cl}\otimes j^{cl})\circ (\varphi^* \otimes \ind)\circ \Delta_{LOT}^{aro}\\
&=(\psi^*\otimes \ind) \circ (j^{aro}\otimes j^{cl})\circ \Delta_{LOT}^{aro}\\
&=(\psi^*\otimes \ind) \circ (j^{aro}\otimes j^{aro})\circ \Delta_{LOT}^{aro}\\
&=(\psi^*\otimes \ind)\circ \Delta_{BCK}^{aro}\circ j^{aro}\\
&=\Delta_{BCK}^{cl}\circ j^{cl}
\end{align*}
where we used that $j^{aro}$ and $j^{cl}$ coincide on $\AA M_{-1}$ and Lemma \ref{lemma:compatibility_j_maps}.
\end{proof}

%--------------------------------------------------------------
\bigskip

\noindent \textbf{Acknowledgements.}\
The authors would like to thank Dominique Manchon for fruitful discussions.
Adrien Busnot Laurent acknowledges the support from the program ANR-25-CE40-2862-01 (MaStoC - Manifolds and Stochastic Computations) and ANR-11-LABX-0020 (Labex Lebesgue).

\bibliographystyle{abbrv}
%\nocite{*}
\bibliography{ma_bibliographie}

\end{document}